\newcommand{\cleqn}{\setcounter{equation}{0}}
\newcommand{\clth}{\setcounter{theorem}{0}}
\newcommand {\sectionnew}[1]{\section{#1}\cleqn\clth}
\newcommand {\subsec} {\subsection}
\theoremstyle{plain}
\newtheorem{theorem}{Theorem}[section]
\newtheorem{lemma}[theorem]{Lemma}
\newtheorem{definition-theorem}[theorem]{Definition-Theorem}
\newtheorem{proposition}[theorem]{Proposition}
\newtheorem{corollary}[theorem]{Corollary}
\newtheorem*{propA*}{Proposition A}
\newtheorem*{thmB*}{Theorem B}
\newtheorem*{thmC*}{Theorem C}
\newtheorem*{thmD*}{Theorem D}
\newtheorem*{thmE*}{Theorem E}
\newtheorem*{thmF*}{Theorem F}
\newtheorem*{thmG*}{Theorem G}
\theoremstyle{definition}
\newtheorem{definition}[theorem]{Definition}
\newtheorem{example}[theorem]{Example}
\newtheorem{examples}[theorem]{Examples}
\newtheorem{remark}[theorem]{Remark}
\newtheorem*{remark*}{Remark}
\newtheorem{observation}[theorem]{Observation}
\newtheorem{problems}[theorem]{Problems}
\numberwithin{equation}{theorem}
\DeclareMathOperator{\End}{End} 
\newcommand\CC{\mathbb{C}}
\newcommand\QQ{\mathbb{Q}}
\newcommand\RR{\mathbb{R}}
\newcommand\ZZ{\mathbb{Z}}
\newcommand \Znn {\ZZ_{\ge 0}}
\newcommand \Zpos {\ZZ_{> 0}}
\DeclareMathOperator{\sr}{sr}
\newcommand\calC{\mathcal{C}}
\newcommand\calN{\mathcal{N}}
\newcommand\calT{\mathcal{T}}
\DeclareMathOperator{\FP}{FP}
\DeclareMathOperator{\FG}{FG}
\DeclareMathOperator{\Hom}{Hom}
\DeclareMathOperator{\Mod}{Mod}
\newcommand{\Modr}{{\Mod}{}\text{-}R}
\newcommand{\rMod}{R\text{-}{\Mod}}
\DeclareMathOperator{\card}{card}
\begin{document}

\title{Levels of cancellation for monoids and modules}

\author[Ara]{Pere Ara}
\address{Departament de Matem\`atiques, Universitat Aut\`onoma de Barcelona, 08193 Bella\-terra, Barcelona, Spain, and Barcelona Graduate School of Mathematics (BGSMath), Campus de Bellaterra, Edifici C, 08193 Bellaterra, Barcelona, Spain}
\email{para@mat.uab.cat}

\author[Goodearl]{\ Ken  Goodearl}
\address{Department of Mathematics, University of California, Santa Barbara, CA 93106, USA}
\email{goodearl@math.ucsb.edu}

\author[Nielsen]{\ Pace P.\ Nielsen} \
\address{Department of Mathematics, Brigham Young University, Provo, UT 84602, USA}
\email{pace@math.byu.edu}

\author[O'Meara]{\ Kevin C. O'Meara}
\address{University of Canterbury, Christchurch, New Zealand}
\email{staf198@uclive.ac.nz}

\author[Pardo]{\ Enrique Pardo}
\address{Departamento de Matem\'aticas, Facultad de Ciencias, Universidad de C\'adiz, Campus de Puerto Real, 11510 Puerto Real (C\'adiz), Spain}
\email{enrique.pardo@uca.es}

\author[Perera]{\ Francesc Perera}
\address{Departament de Matem\`atiques, Universitat Aut\`onoma de Barcelona, Bellaterra, Spain, and Centre de Recerca Matem\`atica, Bellaterra, Spain}
\email{perera@mat.uab.cat}

\begin{abstract} 
Levels of cancellativity in commutative monoids $M$, determined by stable rank values in $\Zpos \cup \{\infty\}$ for elements of $M$, are investigated.  The behavior of the stable ranks of multiples $ka$, for $k \in \Zpos$ and $a \in M$, is determined.  In the case of a refinement monoid $M$, the possible stable rank values in archimedean components of $M$ are pinned down.  Finally, stable rank in monoids built from isomorphism or other equivalence classes of modules over a ring is discussed.
\end{abstract}

\subjclass[2020]{Primary 20M14; secondary 19B10}

\keywords{Commutative monoid; cancellation; stable rank; refinement monoid; separativity; archimedean component; von Neumann regular ring; exchange ring}

\maketitle

\sectionnew{Introduction}  \label{intro}

We study the cancellative behavior of elements in commutative monoids, as determined by their \emph{stable ranks}, which are values in $\Zpos \cup \{\infty\}$ modelled on cancellation conditions for direct sums of modules in algebraic K-theory tied to the concept of stable ranks of (endomorphism) rings.  Cancellation in monoids is thus stratified in different levels: the higher the stable rank of an element, the more restrictive the level in which it cancels.  The essence of the condition is that if an element $a$ in a commutative monoid $M$ has finite stable rank, say stable rank $n$, then $a+x=a+y$ implies $x=y$ for any elements $x,y \in M$ such that $na$ is a summand of $x$ (Lemma \ref{W1.2}).

K-theoretic cancellation for modules was established by Evans \cite{Ev} (for stable rank $1$) and Warfield \cite{War} (for general stable rank).  Namely: If $A$ is a module over a ring $R$ and the endomorphism ring $\End_R(A)$ has finite K-theoretic stable rank (recalled in Definition \ref{sr.ring}), then $A$ cancels from certain direct sums $A \oplus B \cong A \oplus C$.  In case $\End_R(A)$ has stable rank $1$, then $A \oplus B \cong A \oplus C$ implies $B \cong C$ for arbitrary $R$-modules $B$ and $C$ \cite[Theorem 2]{Ev}, while if $\End_R(A)$ has stable rank $n \ge 2$, then $A \oplus B \cong A \oplus C$ implies $B \cong C$ for $B$ and $C$ such that $A^n$ is isomorphic to a direct summand of $B$ \cite[Theorems 1.6, 1.2]{War}.  Warfield proved that the condition $\sr(\End_R(A)) = n$ is equivalent to a certain ``$n$-substitution property" \cite[Theorem 1.6]{War}, which has the following consequence: If $A \oplus B \cong A \oplus C$ and $B \cong A^{n-1} \oplus B'$, then there is a module $E$ such that $A^n \cong A \oplus E$ and $B' \oplus E \cong C$ \cite[Theorem 1.3]{War}.

Suppose $M$ is a monoid whose elements are (labels for) the isomorphism classes $[X]$ in some class $\calC$ of $R$-modules closed under pairwise direct sums and whose operation is given by direct sums: $[X]+[Y] = [X \oplus Y]$.  Assuming $\calC$ contains $A$, $B$, $B'$, $C$ and all direct summands of $A^n$, the above consequence of the $n$-substitution property for $\End_R(A)$ reads
$$
n[A] + [B'] = [A] + [C] \ \implies \ \exists\ [E] \ \text{such that} \ n[A] = [A] + [E] \ \text{and} \ [E] + [B'] = [C]
$$
in $M$.
This condition provides the definition of stable rank for an element $a$ in an arbitrary commutative monoid $M$ \cite[p.122]{AGOP}:
\begin{itemize}
\item $\sr_M(a)$ is the least positive integer $n$ such that
$$
na+x = a+y \ \implies \ \exists\ e \in M \ \text{such that} \ na = a+e \ \text{and} \ e+x = y
$$
for any $x,y \in M$ (if such $n$ exist), or $\infty$ (otherwise).
\end{itemize}
In a monoid of isomorphism classes of the type mentioned, $\sr_M([A]) \le \sr(\End_R(A))$ by Warfield's theorem.  In case $R$ is an exchange ring and $M$ is the monoid $V(R)$ of isomorphism classes of finitely generated projective $R$-modules, there is an equality of stable ranks: $\sr_M([A]) = \sr(\End_R(A))$ for all $A \in M$ \cite[Theorem 3.2]{AGOP}.

Stable ranks of elements of refinement monoids have been investigated in \cite{AGOP} and subsequent works such as \cite{Ara.stability.survey, AGalmost, AGtamewild, MDS, Par}, for application to von Neumann regular and/or exchange rings.  The present work is an investigation of stable ranks in general commutative monoids, with further development for refinement monoids.
\medskip

Throughout, all monoids will be commutative, written additively.

\subsec{Contents}
Section \ref{sr.cond} contains definitions, examples, and basic properties of stable ranks of monoid elements.  Stable rank values in quotients and o-ideals are studied in Section \ref{sr.quo}.  Section \ref{sr.mult} contains our main results on stable ranks of multiples:

\begin{propA*}
{\rm(Lemma \ref{sr.decrease}, Proposition \ref{sr=n->na.hermite})}
Let $M$ be a monoid and $a \in M$.

{\rm(1)} The stable ranks of the multiples $ka$ decrease as $k$ increases: $\sr_M(ka) \ge \sr_M(la)$ for all positive integers $k \le l$.

{\rm(2)} If $\sr_M(a)$ is finite, the stable ranks of the $ka$ are eventually $\le2$, namely for $k \ge \sr_M(a)-1$.
\end{propA*}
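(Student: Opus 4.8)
The plan is to handle the two parts separately, in each case reducing everything to the defining substitution property of a single element (namely $ka$ for part (1) and $a$ for part (2)). I abbreviate: an element $c$ \emph{satisfies $P_n$} if $nc + u = c + v$ forces the existence of $e$ with $nc = c + e$ and $e + u = v$. Throughout I may assume the relevant stable rank is finite, since otherwise the asserted inequality is vacuous.

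For part (1) it suffices to prove the one-step inequality $\sr_M(ka) \ge \sr_M((k+1)a)$ and then iterate, since $\sr_M(ka) \ge \sr_M((k+1)a) \ge \cdots \ge \sr_M(la)$. So I set $n = \sr_M(ka)$ and, given $n(k+1)a + x = (k+1)a + y$, I want a complement $e'$ with $n(k+1)a = (k+1)a + e'$ and $e' + x = y$. First I would rewrite the hypothesis as $nka + (na + x) = ka + (a + y)$ and apply $P_n(ka)$ to extract $e$ with $nka = ka + e$ and $e + na + x = a + y$. The naive temptation is then to take $e' = e + (n-1)a$; this gives the first identity, but the second reduces to cancelling a single copy of $a$ from $(e' + x) + a = a + y$, and \emph{here is the main obstacle}: in part (1) nothing forces $\sr_M(a)$ to be finite, so Lemma \ref{W1.2} is available only for the chunk $ka$, never for a lone $a$.

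The idea that resolves this is to apply the substitution property of $ka$ a \emph{second} time rather than to cancel. Concretely, I would add $(k-1)a$ to both sides of $e + na + x = a + y$ and use $e + ka = nka$ to rewrite the left-hand side, turning it into a genuine instance $nka + \bigl((n-1)a + x\bigr) = ka + y$ of the hypothesis of $P_n(ka)$. A second application yields $e_2$ with $nka = ka + e_2$ and $e_2 + (n-1)a + x = y$, and now $e' := e_2 + (n-1)a$ satisfies $e' + x = y$ directly, while $(k+1)a + e' = n(k+1)a$ follows from $nka = ka + e_2$. This closes the one-step claim for every $n \ge 1$ and $k \ge 1$ (the case $n=1$ degenerating harmlessly), and part (1) follows by iteration.

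For part (2), assume $\sr_M(a) = n < \infty$ and $k \ge n-1$; I want $P_2(ka)$. Given $2ka + x = ka + y$, the plan is a two-stage argument. First I would peel copies of $a$ off both sides using the cancellation of Lemma \ref{W1.2} for $a$: cancelling one $a$ is legitimate as long as the retained left-hand side still has $na$ as a summand, and a short count shows this persists until the left-hand multiplicity drops to $k+1$ — this uses $k \ge n-1$ in exactly the boundary case and reduces the equation to $(k+1)a + x = a + y$. Second, since $k+1 \ge n$, I would write $(k+1)a = na + (k+1-n)a$ and apply $P_n(a)$ once, obtaining $e_0$ with $na = a + e_0$ and $e_0 + (k+1-n)a + x = y$; then $e := e_0 + (k+1-n)a$ gives $e + x = y$, and $ka + e = 2ka$ follows from $a + e_0 = na$ (with $2k - n \ge 0$ again guaranteed by $k \ge n-1$). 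The delicate point I anticipate in part (2) is purely the bookkeeping: verifying that the \emph{sharp} bound $k \ge n-1$ is precisely what keeps enough copies of $a$ present to license both the repeated cancellation and the final identity, whereas the conceptual crux throughout remains the surplus-disposal trick of part (1), where re-applying the substitution property of $ka$ manufactures the needed complement without any cancellation of $a$.
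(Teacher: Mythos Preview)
Your proof is correct. For part (1), your argument and the paper's (Lemma \ref{sr.decrease}) are essentially identical: both arrive at the key equation $nka + (n-1)a + x = ka + y$ and then apply the substitution property of $ka$ once more. You reach that equation by applying $P_n(ka)$ and then adding $(k-1)a$, whereas the paper adds $(k-1)a$ first and invokes Lemma \ref{W1.2} for $ka$; since Lemma \ref{W1.2} is itself an immediate consequence of $P_n$, the two routes are interchangeable.

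For part (2) there is a genuine difference worth noting. In Proposition \ref{sr=n->na.hermite} the paper applies $P_n(a)$ \emph{first} and only afterward cancels $(k-1)a$; that final cancellation via Lemma \ref{W1.2} needs $na \le ka$, forcing $k \ge n$, and in return yields the stronger conclusion that $ka$ is Hermite. You reverse the order---peel $(k-1)$ copies of $a$ first, while the left side still carries at least $(k+1)a$, and then apply $P_n(a)$ to the reduced equation $(k+1)a + x = a + y$. The bookkeeping you flag is exactly right: the last peel needs $k+1 \ge n$, so your argument goes through for the sharp threshold $k \ge n-1$. The paper obtains this sharper bound only later, via Theorem \ref{W1.12}; your route gets it directly, at the acceptable cost of concluding merely $\sr(ka) \le 2$ rather than the Hermite property (which, as the paper remarks after Proposition \ref{sr=n->na.hermite}, can genuinely fail at $k = n-1$).
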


\begin{thmB*}
{\rm(Theorems \ref{W1.12}, \ref{sr.ka.refine})}
Let $M$ be a monoid and $a \in M$ with $\sr_M(a) = n < \infty$.  Then
$$
1 + \left\lfloor \frac{n-1}{l} \right\rfloor \le \sr_M(la) \le 1 + \left\lceil \frac{n-1}{l} \right\rceil
$$
for all positive integers $l$.  Further, $\sr_M(la) = 1 + \left\lceil \frac{n-1}{l} \right\rceil$ in case $M$ is a refinement monoid.
\end{thmB*}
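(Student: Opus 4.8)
The plan is to prove the two inequalities and the refinement equality separately, reducing each to the defining implication for $a$ together with Lemma \ref{W1.2}. Write $b=la$ and abbreviate by ``$\sr_M(c)\le k$'' the corresponding defining implication. I would first record the routine strengthening that $\sr_M(a)\le n$ forces the implication $Na+x=a+y\Rightarrow\exists e\,(Na=a+e,\ e+x=y)$ for every $N\ge n$: apply the definition to $na+[(N-n)a+x]=a+y$ and absorb the surplus $(N-n)a$ into the witness. I use this freely below.

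For the upper bound, set $m=1+\lceil (n-1)/l\rceil$, so that $l(m-1)=l\lceil (n-1)/l\rceil\ge n-1$ and hence $ml-l+1\ge n$. Given $mb+x=b+y$, i.e. $(ml)a+x=la+y$, the idea is to cancel copies of $a$ one at a time from both sides, lowering the right-hand coefficient from $l$ down to $1$. Concretely I would show by induction that $(ml-i)a+x=(l-i)a+y$ for $i=0,1,\dots,l-1$: to pass from $i$ to $i+1$ (while $l-i\ge 2$) rewrite the equation as $a+[(ml-i-1)a+x]=a+[(l-i-1)a+y]$; since $ml-i-1\ge ml-l+1\ge n$ we have $na\dsummand (ml-i-1)a+x$, so Lemma \ref{W1.2} strips the leading $a$. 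At $i=l-1$ the equation reads $(ml-l+1)a+x=a+y$ with $ml-l+1\ge n$, so the strengthened condition for $a$ yields $e$ with $(ml-l+1)a=a+e$ and $e+x=y$; adding $(l-1)a$ gives $(ml)a=la+e$, i.e. $mb=b+e$, as required.

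For the lower bound, the crucial clean implication is $\sr_M(b)\le q\Rightarrow\sr_M(a)\le lq$. Given $(lq)a+x=a+y$, add $(l-1)a$ to both sides to get $qb+[(l-1)a+x]=b+y$; applying $\sr_M(b)\le q$ gives $e'$ with $qb=b+e'$ and $e'+(l-1)a+x=y$, whence $(lq)a=a+[(l-1)a+e']$ and $e:=(l-1)a+e'$ witnesses $\sr_M(a)\le lq$. Consequently, if $\sr_M(b)\le q:=\lfloor (n-1)/l\rfloor$ then $\sr_M(a)\le lq\le n-1$, contradicting $\sr_M(a)=n$; hence $\sr_M(b)\ge 1+\lfloor (n-1)/l\rfloor$ (trivially so when $q=0$).

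For the refinement equality, the upper bound already gives $\sr_M(b)\le 1+\lceil (n-1)/l\rceil$, so it remains to prove the matching lower bound when $M$ is a refinement monoid; by the pattern above it suffices to establish the sharpened implication $\sr_M(b)\le m\Rightarrow\sr_M(a)\le l(m-1)+1$ for every $m$, since taking $m=\lceil (n-1)/l\rceil$ then forces $\sr_M(a)\le l(m-1)+1<n$, contradicting $\sr_M(a)=n$. To prove the sharpened implication I would mirror the lower-bound computation: from an instance $(l(m-1)+1)a+x=a+y$, add $(l-1)a$ to reach $mb+x=b+y$, apply $\sr_M(b)\le m$ to obtain $e_1$ with $(lm)a=la+e_1$ and $e_1+x=y$, and then descend from $(lm)a=la+e_1$ to $(l(m-1)+1)a=a+e_1$, after which $e_1$ witnesses $\sr_M(a)\le l(m-1)+1$. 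The descent asks to cancel $l-1$ copies of $a$ from $(lm)a=la+e_1$, and here is the crux: precisely these cancellations lie outside the reach of Lemma \ref{W1.2}, because the summand hypothesis $na\dsummand(\cdot)$ fails in the relevant range — this is exactly why the general lower bound only attains the floor. The main obstacle is therefore to supply these cancellations, and the plan is to extract them from the refinement property of $M$: refinement monoids whose elements have finite stable rank enjoy the stronger, separativity-type cancellation needed to strip copies of $a$ in the absence of the literal summand condition, via common refinements of the two sides of $(lm)a=la+e_1$. Carrying out this refinement-powered descent (or, alternatively, converting the failure of $\sr_M(a)\le n-1$ into an explicit bad pair for $b$ at level $\lceil (n-1)/l\rceil$ by splitting a witness through a common refinement) is the step I expect to demand the real work.
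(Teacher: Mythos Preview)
Your arguments for the two inequalities are correct, and in fact more direct than the paper's: the paper first builds the machinery of the ``$(k,l)$-stable rank condition'' $\sr_{k,l}[a]$ and the invariant $m_{a,M}$ (Theorem \ref{W1.11}, Corollary \ref{srla.via.maM}) and then reads off both bounds, whereas you establish the upper bound by a clean repeated application of Lemma \ref{W1.2} and the lower bound by the implication $\sr_M(la)\le q\Rightarrow\sr_M(a)\le lq$ (which is the paper's Lemma \ref{srma.sra}). Both routes are fine; yours avoids the auxiliary invariant.

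The refinement case, however, has a real gap. You correctly reduce the problem to the implication $\sr_M(la)\le m\Rightarrow\sr_M(a)\le l(m-1)+1$ and, starting from $(l(m-1)+1)a+x=a+y$, you correctly reach $e_1\in M$ with $(lm)a=la+e_1$ and $e_1+x=y$. But the ``descent'' you then propose, namely passing from $(lm)a=la+e_1$ to $(l(m-1)+1)a=a+e_1$ with the \emph{same} $e_1$, is not available: there is no reason that $e_1$ itself should satisfy the second equation, and the ``separativity-type cancellation'' you invoke does not hold in refinement monoids of finite stable rank (indeed, the paper later constructs refinement monoids whose nonzero elements all have finite stable rank yet which are not separative). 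So neither of your two proposed completions is the right move.

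The missing ingredient is Proposition \ref{srequiv.refmon}: in a refinement monoid, $\sr_M(a)\le k$ is equivalent to the condition that $ka+x=a+y$ always implies $x\le y$. You have already shown $x\le y$ (via $e_1+x=y$), so Proposition \ref{srequiv.refmon} immediately gives $\sr_M(a)\le l(m-1)+1$, and your contradiction goes through. In other words, the ``real work'' you anticipate is a one-line appeal to this refinement characterization; the paper's proof of the refinement equality (Lemma \ref{refmon.maM=sra-1} feeding into Theorem \ref{sr.ka.refine}) uses exactly the same proposition at the key step.
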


Section \ref{sr.value.arch} concerns stable rank values in archimedean components and in separative monoids.

\begin{thmC*}
{\rm(Theorem \ref{srC})}
Let $C$ be an archimedean component in a monoid $M$.  

{\rm(1)} The set $\sr_M(C) := \{ \sr_M(c) \mid c \in C \}$ equals $\Zpos$ or $\ZZ_{\ge2}$ or $\{\infty\}$ or a finite subset of $\Zpos$.

{\rm(2)} If $M$ is conical, then $\sr_M(C)$ equals $\{1\}$ or $\ZZ_{\ge2}$ or $\{\infty\}$ or a finite subset of $\ZZ_{\ge2}$.
\end{thmC*}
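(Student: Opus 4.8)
The plan is to split the statement into three independent assertions: (I) within $C$ the value $\infty$ never coexists with a finite value; (II) if all values are finite and unbounded then every integer $\ge 2$ occurs; and (III) [for part~(2)] the value $1$, when present in a conical $M$, occurs alone. Throughout I will use that every multiple $kc$ of $c\in C$ again lies in $C$ (since $c\le kc\le k\cdot c$) and that $C$ is closed under addition, so the estimates of Lemma~\ref{sr.decrease}, Proposition~\ref{sr=n->na.hermite} and Theorems~\ref{W1.12},~\ref{sr.ka.refine} may be applied inside $C$. A preliminary reduction is the equivalence $\sr_M(c)<\infty\iff\sr_M(kc)<\infty$ for each fixed $k\ge1$: one direction is Lemma~\ref{sr.decrease}, while for the other, given $\sr_M(kc)=m$ and $mk\,c+x=c+y$, adding $(k-1)c$ to both sides rewrites this as $m(kc)+x'=kc+y$ with $x'=x+(k-1)c$, and the stable-rank witness for $kc$ then yields one for $c$, giving $\sr_M(c)\le k\,\sr_M(kc)$.

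The core of part~(1) is assertion~(I). Suppose $c_1\in C$ has $\sr_M(c_1)=n<\infty$, and let $c_2\in C$. Choose $p,q$ with $c_1\le pc_2$ and $c_2\le qc_1$, enlarging $q$ so that $q\ge n-1$; then $\sr_M(qc_1)\le 2$ by Proposition~\ref{sr=n->na.hermite}. Writing $qc_1=c_2+t$, I will verify directly that $\sr_M(c_2)\le N$ for $N=2pq+1$. Given $Nc_2+x=c_2+y$, adding $t$ produces $qc_1+\bigl[(N-1)c_2+x\bigr]=qc_1+y$. Since $c_1\le pc_2$ gives $2qc_1\le 2pq\,c_2=(N-1)c_2$, the element $\sr_M(qc_1)\cdot qc_1\le 2qc_1$ is a summand of $(N-1)c_2+x$, so Lemma~\ref{W1.2} cancels $qc_1$ and leaves $(N-1)c_2+x=y$; taking $e=(N-1)c_2$ (so that $Nc_2=c_2+e$) gives the required witness. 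Hence finiteness of stable rank is a property of all of $C$, and $\sr_M(C)$ is either $\{\infty\}$ or a subset of $\Zpos$.

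Assume now $\sr_M(C)\subseteq\Zpos$. If it is bounded it is a finite subset of $\Zpos$, as allowed; so suppose it is unbounded and fix a target $k\ge 2$. The key observation is that if some $c\in C$ has $\sr_M(c)=n$ with $(k-1)\mid(n-1)$ and $n\ge k$, then for $l=(n-1)/(k-1)$ both bounds of Theorem~\ref{W1.12} collapse to $1+\lfloor(n-1)/l\rfloor=k=1+\lceil(n-1)/l\rceil$, forcing $\sr_M(lc)=k$, so $k\in\sr_M(C)$. In a refinement monoid this finishes the argument cleanly: the exact formula of Theorem~\ref{sr.ka.refine} shows that, as $l$ runs over $\Zpos$ and $n$ over the unbounded set of values, $1+\lceil(n-1)/l\rceil$ sweeps out every integer $\ge 2$ but never $1$, so $\sr_M(C)$ equals $\ZZ_{\ge 2}$ together with whatever rank-$1$ elements $C$ has, i.e.\ $\ZZ_{\ge 2}$ or $\Zpos$. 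In a general monoid Theorem~\ref{W1.12} only sandwiches $\sr_M(lc)$ in a window of width up to $2$, so the multiples of a single element may jump over $k$, and securing a value $n\equiv 1\pmod{k-1}$ (or else interpolating an element of rank exactly $k$ by combining multiples of several elements of $C$) is the \emph{main obstacle}: it is precisely the point where the numerical bounds are insufficient and the monoid structure must be used directly.

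For part~(2), assume $M$ is conical; by part~(1) it suffices to prove assertion~(III), namely that $1\in\sr_M(C)$ forces $\sr_M(C)=\{1\}$, which rules out $\Zpos$ and every finite set containing $1$ other than $\{1\}$. First I would show that a rank-$1$ element $c$ of a conical monoid is cancellable: given $c+x=c+y$, the definition yields $e$ with $c=c+e$ and $e+x=y$; rewriting $c=c+e$ as $c+e=c+0$ and applying the rank-$1$ condition again gives $e_1$ with $e_1+e=0$, so $e=0$ by conicality, whence $y=x$. Cancellability then spreads through $C$: if $d\in C$ then $d\le mc$ for some $m$, say $mc=d+f$, and $d+x=d+y$ yields $mc+x=mc+y$; cancelling $c$ one copy at a time (legitimate since $c$ is cancellable) gives $x=y$, so $d$ is cancellable and hence $\sr_M(d)=1$. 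Thus rank $1$ is isolated in the conical case. The only genuinely hard step in the whole proof remains the general-monoid interval-filling of part~(1); the conical refinement, by contrast, follows cleanly once the cancellability of rank-$1$ elements is in hand.
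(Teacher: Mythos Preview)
Your treatment of the dichotomy (I) and of the conical case (III) is correct and essentially matches the paper's Propositions~\ref{dichot.arch} and~\ref{trichot.arch} (the citation for $\sr_M(qc_1)\le2$ when $q\ge n-1$ should be Theorem~\ref{W1.12} rather than Proposition~\ref{sr=n->na.hermite}, but the inequality itself is fine).

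The genuine gap is exactly the one you flag: in a general monoid you do not establish that an unbounded $\sr_M(C)\subseteq\Zpos$ contains every $k\ge2$. Your proposed remedy, locating an element whose stable rank $n$ satisfies $n\equiv1\pmod{k-1}$, cannot be carried out: unboundedness of $\sr_M(C)$ gives no control over residues, and there is no mechanism to interpolate ranks by combining several elements. Contrary to your diagnosis, the missing ingredient is not ``monoid structure used directly'' but sharper \emph{numerical} information about a single element.

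The paper closes the gap via Theorem~\ref{W1.11} and Corollary~\ref{srla.via.maM}: for each $a$ with $\sr_M(a)=n<\infty$ there is an integer $m_{a,M}\in[0,n-1]$ such that
\[
\sr_M(la)=\max\Bigl(\Bigl\lceil\tfrac{n}{l}\Bigr\rceil,\;1+\Bigl\lceil\tfrac{m_{a,M}}{l}\Bigr\rceil\Bigr)
\]
for all $l\ge1$. Both functions of $l$ on the right are non-increasing. Proposition~\ref{lowvalue.srla} then shows, purely arithmetically, that whenever $n\ge\max\bigl(2,\,k(k-1)(k-2)\bigr)$ each function separately attains the value $k$ at some $l$; since the maximum of two non-increasing functions that each hit $k$ must itself hit $k$, some multiple $la$ has $\sr_M(la)=k$. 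No congruence condition on $n$ is required: one simply picks any $a\in C$ with $\sr_M(a)\ge k^3$. This is packaged as Theorem~\ref{srSinfinite}, from which Theorem~\ref{srC} follows immediately. The two-sided estimate of Theorem~\ref{W1.12} is genuinely too coarse here; the invariant $m_{a,M}$ is what rescues the argument.
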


\begin{thmD*}
{\rm(Corollary \ref{trichot})}
If $M$ is a separative monoid, then the stable rank of any element of $M$ is $1$, $2$, or $\infty$.
\end{thmD*}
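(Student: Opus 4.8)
The plan is to deduce Corollary~\ref{trichot} from Theorem~\ref{srC} by showing that separativity forbids finite stable ranks $\ge 3$. By Theorem~\ref{srC}(1), for the archimedean component $C$ of a given element the set $\sr_M(C)$ is $\Zpos$, $\ZZ_{\ge2}$, $\{\infty\}$, or a finite subset of $\Zpos$; and since every multiple $ka$ is archimedean equivalent to $a$, all multiples lie in $C$. It therefore suffices to prove the single implication: if $M$ is separative and $\sr_M(a)=n<\infty$, then $n\le2$. I would verify this by establishing the $2$-substitution property for $a$ directly, namely that $2a+x=a+y$ forces the existence of $e$ with $2a=a+e$ and $e+x=y$.

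The main tool will be the reformulation of separativity as cancellation modulo archimedean equivalence: whenever $u+c=v+c$ with $c\lesssim ku$ and $c\lesssim kv$ for some $k$ (here $\lesssim$ is the algebraic preorder, $u\lesssim v$ iff $v=u+w$ for some $w$), one has $u=v$. A useful first consequence is that cancellation holds within each archimedean component. The verification of the $2$-substitution property then splits according to the position of $y$ relative to $C$.

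First I would treat the case $a\lesssim ky$ for some $k$. Adding $(n-2)a$ to $2a+x=a+y$ yields $(n-1)a+(a+x)=(n-1)a+y$, and since $(n-1)a\lesssim(n-1)(a+x)$ and $(n-1)a\lesssim(n-1)ky$, the separative cancellation law strips off $(n-1)a$ to give $a+x=y$, whence $e=a$ works. Notably this case uses separativity alone, not the finiteness of $\sr_M(a)$; consequently a finite stable rank $n\ge3$ can only arise from pairs $(x,y)$ in the complementary case.

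The hard part will be exactly that complementary case, where $a\not\lesssim ky$ for every $k$, so $y$ lies strictly below $C$. Here cancellation is unavailable---indeed for an idempotent $a$ this configuration produces genuinely infinite stable rank---so the finiteness of $n$ must now be exploited to supply the witness. I would feed the padded equation $na+x=a+\bigl((n-2)a+y\bigr)$ into the $n$-substitution property to obtain $e_0$ with $na=a+e_0$ and $e_0+x=(n-2)a+y$, and then attempt to peel a summand $e$ off $e_0$ satisfying $a+e=2a$ and $e+x=y$. The obstruction is that removing the padding $(n-2)a$ again demands $a\lesssim$ a multiple of $y$, so the naive descent stalls, most acutely when $n=3$. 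To overcome this I expect to reduce to the conical case, using Theorem~\ref{srC}(2) together with the behaviour of stable rank under the quotient by units developed in Section~\ref{sr.quo}: in a conical monoid the hypothesis that $y$ sits strictly below $C$ becomes rigid enough that, combined with the bound $\sr_M((n-1)a)\le2$ from Proposition~\ref{sr=n->na.hermite} and the cancellation already available inside $C$, one can manufacture the required $e$. Pinning down this conical case is where I expect the real work to lie.
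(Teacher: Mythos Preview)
Your proposal is incomplete, and the gap is exactly where you say it is: the ``complementary case'' where $a \notin \langle y \rangle$.  You give no argument there, only a sketch involving a reduction to the conical case, Proposition~\ref{sr=n->na.hermite}, and unspecified cancellation; none of these ingredients by themselves produce the witness $e$ you need, and the reduction to conical monoids via $M/U(M)$ would still require you to handle the same configuration in the quotient.

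The paper's proof (Proposition~\ref{sep.srfinite.le2}) avoids the case split entirely by showing that the complementary case \emph{never occurs}: whenever $2a+x = a+y$ and $\sr(a) = n < \infty$, one necessarily has $a \in \langle y \rangle$.  The trick is to add $(n-1)y$, not $(n-2)a$, to both sides of $2a+x = a+y$, obtaining $2a + x + (n-1)y = a + ny$, and then repeatedly substitute $2a+x$ for each occurrence of $a+y$ on the left.  After $n-1$ substitutions this reads $na + (a+nx) = a + ny$, and now the $n$-stable rank condition yields $e$ with $e + (a+nx) = ny$, so $a \le ny$ and hence $a \in \langle y \rangle$.  At that point your Case~1 argument (or, more simply, a direct application of Lemma~\ref{agop2.1}(d) to $a + (a+x) = a + y$) finishes the proof.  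The finiteness of $\sr(a)$ is used precisely to force $a \in \langle y\rangle$, not to manufacture a witness in the bad case.
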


Stable rank values in conical refinement monoids are investigated in Sections \ref{sr.refmon} and \ref{sr.Zge2}.

\begin{thmE*}
{\rm(Theorems \ref{simple.crm.bdd.sr}, \ref{trichot.simple.crm}, \ref{simple.crm.sr[2,infty)})}
Let $M$ be a simple conical refinement monoid.

{\rm(1)} If the stable ranks of the elements of $M$ have a finite upper bound, then $M$ is cancellative, and all its elements have stable rank $1$.

{\rm(2)} The set $\sr_M(M\setminus \{0\})$ equals $\{1\}$ or $\{\infty\}$ or $\ZZ_{\ge2}$.  All three possibilities occur.
\end{thmE*}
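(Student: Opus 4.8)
The plan is to view $C := M\setminus\{0\}$ as a single archimedean component and invoke Theorem~\ref{srC}. Assume $M\neq\{0\}$. Simplicity makes each nonzero element an order-unit, so any two nonzero elements are archimedean equivalent while $0$ forms its own component; hence $C$ is a single archimedean component, and Theorem~\ref{srC}(2) forces $\sr_M(C)$ to be $\{1\}$, $\ZZ_{\ge2}$, $\{\infty\}$, or a finite subset $F\subseteq\ZZ_{\ge2}$. The first three are exactly the alternatives of part~(2), so everything comes down to (i) part~(1), which excludes the exotic $F$, and (ii) producing an example for each surviving value set.

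For part~(1) I would prove the sharpened contrapositive: \emph{if $M$ has even one element of finite stable rank $\ge 2$, then $\sr_M(C)$ is unbounded}. Granting this, a finite upper bound on stable ranks leaves only the values $1$ and $\infty$; the bound also forbids $\infty$, so every stable rank equals $1$ and, by Theorem~\ref{srC}(2), $\sr_M(C)=\{1\}$. Cancellativity then follows: given $a+x=a+y$, the defining property of $\sr_M(a)=1$ yields $e$ with $a=a+e$ and $e+x=y$, and $e=0$ is forced—otherwise, using simplicity to write $a\le me$, the element $p:=me$ satisfies $p=p+e$, whence $me=(m+1)e$ and $2p=p$, so $\sr_M(p)=\infty$, contradicting the bound—giving $x=y$.

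To prove that implication I would split on separativity; an element of finite stable rank $\ge2$ can occur only when $M$ is non-separative. Indeed, if $M$ is separative, then Corollary~\ref{trichot} confines stable ranks to $\{1,2,\infty\}$, and the dichotomy for simple separative refinement monoids (each is cancellative or purely infinite) gives $\sr_M(C)=\{1\}$ or $\{\infty\}$, so no element of finite stable rank $\ge2$ exists. When $M$ is not separative, I would extract $a\neq b$ with $2a=a+b=2b$ and use refinement together with simplicity to build, for each $k$, an element whose stable rank is at least $k$, the equality $\sr_M(la)=1+\lceil\frac{n-1}{l}\rceil$ of Theorem~\ref{sr.ka.refine} serving as the device that turns carefully constructed ``roots'' into prescribed stable-rank values. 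Part~(2) is then immediate from Theorem~\ref{srC}(2): the option $F$ is ruled out by part~(1), since such an $M$ would be bounded, hence cancellative, forcing $\sr_M(C)=\{1\}\not\subseteq\ZZ_{\ge2}$. For realizations I take $\Znn$ for $\{1\}$; the two-element monoid $\{0,\infty\}$ for $\{\infty\}$ (here $na+w=a$ has the nonzero solution $w=\infty$ for every $n$, so its nonzero element has stable rank $\infty$); and the construction of Section~\ref{sr.Zge2} for $\ZZ_{\ge2}$.

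The crux is the non-separative half: manufacturing unbounded stable ranks from a single non-separative defect. That refinement is essential is already visible in the universal monoid $\langle a,b\mid 2a=a+b=2b\rangle$, which is simple, conical and non-separative but fails refinement (the equation $a+a=a+b$ admits no refinement matrix), and in which a naive count would return the forbidden constant value $2$; it is interpolation that spreads the lone defect across all of $\ZZ_{\ge2}$. Secondary difficulties are the separative dichotomy invoked above and the design of the Section~\ref{sr.Zge2} monoid realizing every value $\ge2$ at once.
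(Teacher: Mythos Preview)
Your plan for part~(2) and for assembling the examples is essentially the paper's: once part~(1) is in hand, Theorem~\ref{srC}(b) plus the unboundedness from~(1) collapses the finite-$F$ option, and $\Znn$, $\{0,\infty\}$, Section~\ref{sr.Zge2} realize the three cases.

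The gap is in part~(1). Your non-separative branch is the whole problem, and you have not supplied the construction: from a single witness $2a=a+b=2b$, $a\ne b$, how do you manufacture elements of stable rank $\ge k$ for every $k$? Theorem~\ref{sr.ka.refine} goes the wrong way---it lowers stable rank under multiplication, and inverting it would require producing exact $l$-th ``roots'' of $a$, which a simple refinement monoid need not possess (Lemma~\ref{simple.crm.noatoms} gives only $nc\le a$, not $nc=a$). Even granting roots, the formula $\sr(lc)=1+\lceil(\sr(c)-1)/l\rceil$ starting from $\sr(a)=2$ only bumps $\sr(c)$ to $2$ or $3$, not arbitrarily high. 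Your separative branch also leans on an unproved dichotomy (cancellative vs.\ purely infinite); that dichotomy is true, but proving it already amounts to the bounded case of part~(1).

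The paper's argument for Theorem~\ref{simple.crm.bdd.sr} is direct and avoids the contrapositive entirely. If $M$ has an irreducible element, Lemma~\ref{atom.in.crm} plus simplicity makes $M\cong\Znn$. Otherwise, given $a+x=a+y$ with $x\ne0$ and a uniform bound $n$ on stable ranks, Lemma~\ref{simple.crm.noatoms} supplies a nonzero $z$ with $nz\le x$; simplicity gives $a+u=mz$, and adding $u$ to both sides yields $mz+x=mz+y$. Now Lemma~\ref{W1.2} (applied with $\sr(z)\le n$ and $nz\le x$) cancels the $z$'s to give $x=y$. No unbounded-stable-rank construction is needed; the bound is used only to enable a single cancellation step.
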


In Sections \ref{V(R)s} and \ref{V(C)s}, we discuss monoids built from isomorphism or other equivalence classes of certain types of modules, stable rank values in these monoids, and relations with the K-theoretic stable ranks of the endomorphism rings of the modules concerned.

\subsec{Terminology and notation}
We repeat our general hypothesis, that all monoids in this paper are commutative and additive.

The \emph{units} of a monoid $M$ are the elements that are invertible (with respect to addition).  Denote the set of units of $M$ by $U(M)$; it is an abelian group.
The monoid $M$ is \emph{conical} (or \emph{reduced}) if $x+y=0$ implies $x=y=0$ for any $x,y \in M$, that is, if $U(M) = \{0\}$.  

The \emph{algebraic ordering} on $M$ is the reflexive, transitive relation $\le$ defined by 
$$
x \le y \ \iff \ \exists\ z \in M \ \text{such that} \ x+z = y.
$$
An element $u \in M$ is an \emph{order-unit} if
$$
\forall\ x \in M, \ \ \exists\ n \in \Zpos\, \ \text{such that}\, \ x \le nu.
$$
An \emph{o-ideal} of $M$ is a submonoid $I$ such that $x \le y \in I$ implies $x \in I$ for any $x,y \in M$.  We write $\langle x \rangle$ for the o-ideal generated by an element $x \in M$, that is,
$$
\langle x \rangle := \{ y \in M \mid y \le nx\, \ \text{for some}\, \ n \in \Zpos \rangle.
$$
We say that $M$ is \emph{simple} if it has precisely two o-ideals, that is, $M$ is not a group and its only o-ideals are $U(M)$ and $M$.  In the conical case, these conditions amount to requiring that $M$ is nonzero and all its nonzero elements are order-units.

Elements $x,y \in M$ are \emph{asymptotic}, written $x \asymp y$, if $\langle x\rangle = \langle y\rangle$, that is, if there exist $m,n \in \Zpos$ such that $x \le my$ and $y \le nx$.  The relation $\asymp$ is an equivalence relation on $M$, the equivalence classes of which are called \emph{archimedean components}.  We write $M(x)$ for the archimedean component containing an element $x$ of $M$.

Given an o-ideal $I$ of $M$, there is a congruence $\equiv_I$ on $M$ given by the rule
$$
x \equiv_I y \ \iff \ \exists\ a,b \in I\, \ \text{such that}\, \ x+a = y+b.
$$
The monoid $M/{\equiv_I}$ is called the \emph{quotient of $M$ modulo $I$} and is denoted $M/I$.  We write $[x]_I$, or just $[x]$ if $I$ is understood, for the $\equiv_I$-equivalence class of $x$ in $M/I$.

The (\emph{Riesz}) \emph{refinement property} for $M$ is the condition
\begin{multline*}
\forall\ x_1,x_2,y_1,y_2 \in M, \quad x_1+x_2 = y_1+y_2 \ \implies \ \exists\ z_{ij} \in M\, \ \text{such that}  \\
 x_i = z_{i1}+z_{i2}\, \ \text{for}\, \ i=1,2\, \ \text{and}\, \ y_j = z_{1j}+z_{2j}\, \ \text{for}\, \ j= 1,2.
 \end{multline*}
When this holds, $M$ \emph{has refinement}, or is a \emph{refinement monoid}.  The refinement property implies analogous refinements for all equations $\sum_{i=1}^m x_i = \sum_{j=1}^n y_j$ in $M$.

The monoid $M$ is \emph{cancellative} if $x+z=y+z$ implies $x=y$ for any $x,y,z \in M$, and it is \emph{separative} (or \emph{has separative cancellation}) if $2x=x+y=2y$ implies $x=y$ for any $x,y \in M$.  We say that $M$ is \emph{strongly separative} if $2x=x+y$ implies $x=y$ for any $x,y \in M$.

Several conditions equivalent to separativity were given in \cite[Lemma 2.1]{AGOP}, which we state here for reference.

\begin{lemma}  \label{agop2.1}
For a monoid $M$, the following conditions are equivalent:

{\rm(a)} $M$ is separative.

{\rm(b)} For any $x,y \in M$, if $2x=2y$ and $3x=3y$, then $x=y$.

{\rm(c)} For any $x,y \in M$ and $n \in \Zpos$, if $nx=ny$ and $(n+1)x=(n+1)y$, then $x=y$.

{\rm(d)} For any $x,y,z \in M$, if $x+z = y+z$ and $z \in \langle x \rangle \cap \langle y \rangle$, then $x=y$.

In case $M$ has refinement, separativity is also equivalent to the following:

{\rm(e)} For any $x,y,z \in M$, if $x+2z = y+2z$, then $x+z = y+z$.
\end{lemma}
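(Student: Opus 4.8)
The plan is to establish the chain (a) $\Rightarrow$ (c) $\Rightarrow$ (b) $\Rightarrow$ (a) together with (a) $\Leftrightarrow$ (d), concentrating all the work in the implication (a) $\Rightarrow$ (c); the equivalence with (e) is then handled separately under the refinement hypothesis. The quick directions come first. For (c) $\Rightarrow$ (b) one simply specializes to $n=2$. For (b) $\Rightarrow$ (a), assuming $2x=x+y=2y$ I would compute $3x=x+2x=x+(x+y)=2x+y=x+2y$ and symmetrically $3y=x+2y$, so that $2x=2y$ and $3x=3y$ and (b) applies. For (d) $\Rightarrow$ (a) I would take $z=x$: the relation $2x=x+y$ rewrites as $x+x=y+x$, and since $x\in\langle x\rangle$ while $x\le 2y$ gives $x\in\langle y\rangle$, condition (d) forces $x=y$.

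The crux is (a) $\Rightarrow$ (c), and this is the step I expect to be the main obstacle. Assume $M$ is separative with $nx=ny$ and $(n+1)x=(n+1)y$. I would first record the base relation $nx+x=nx+y$, which holds because $nx+x=(n+1)x=(n+1)y=ny+y=nx+y$, the final equality using $nx=ny$; adding multiples of $x$ then yields the relations $R_k\colon kx+x=kx+y$ for every $k\ge n$. The heart of the argument is a \emph{descent}: for $k\ge 2$ I would apply separativity to the \emph{asymmetric} pair $u=(k-1)x+y$ and $v=kx$, checking that $2u=u+v=2v=2kx$ (the identities $u+v=(2k-1)x+y=2kx$ and $2u=(2k-2)x+2y=2kx$ follow from $R_{2k-1}$ and $R_{2k-2}$, whose indices lie above the current floor and so are already available), and conclude $u=v$, i.e. $R_{k-1}$. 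Descending from $k=n$ to $k=2$ produces $R_1\colon 2x=x+y$, and the symmetric run (the hypotheses being symmetric in $x,y$) gives $2y=x+y$; then $2x=x+y=2y$ and separativity forces $x=y$. The delicate point to get right here is that single-copy cancellation cannot be extracted directly — separativity applied to $((k-1)x,(k-1)y)$ fails for want of $(k-1)x=(k-1)y$ — so the asymmetric choice of $u$ and $v$ is exactly what makes $2u=u+v=2v$ hold and lets the level drop by one.

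With (a), (b), (c) equivalent, I would obtain (a) $\Rightarrow$ (d) by reducing to (c). Given $x+z=y+z$ with $z\le nx$ and $z\le ny$, write $nx=z+r$ and $ny=z+s$; adding $r$, respectively $s$, to the equation gives $(n+1)x=nx+y$ and $(n+1)y=ny+x$, whence $(n+k)x=nx+ky$ and $(n+k)y=ny+kx$ by induction. Taking $k=n$ yields $2nx=nx+ny=2ny$, so separativity applied to $nx$ and $ny$ gives $nx=ny$, and then $(n+1)x=nx+y=(n+1)y$; now (c) delivers $x=y$.

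Finally I would treat (e). The direction (a) $\Rightarrow$ (e) needs no refinement: given $x+2z=y+2z$, set $a=x+z$ and $b=y+z$, so $a+z=b+z$ with $z\le a$ and $z\le b$, hence $z\in\langle a\rangle\cap\langle b\rangle$, and (d) gives $a=b$, that is $x+z=y+z$. For the converse (e) $\Rightarrow$ (a) the refinement property must enter. The clean base case of (d) with $z\le x$ and $z\le y$ is immediate from (e): writing $x=z+x'$ and $y=z+y'$ turns $x+z=y+z$ into $x'+2z=y'+2z$, so (e) gives $x'+z=y'+z$, i.e. $x=y$. The plan is then to recover (a) by using refinement to split a general dominating element $z$ (with $z\le nx$ and $z\le ny$) into pieces each dominated by $x$ and by $y$, cancelling them one at a time down to this base case. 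I expect the bookkeeping of this refinement splitting to be the only delicate point of the (e) portion.
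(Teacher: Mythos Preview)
The paper does not prove this lemma; it merely cites \cite[Lemma 2.1]{AGOP} and states the result for reference. So there is no in-paper argument to compare against.

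Your proof is correct. The chain (a)$\Leftrightarrow$(b)$\Leftrightarrow$(c)$\Leftrightarrow$(d) is fully justified: the descent argument for (a)$\Rightarrow$(c) is sound (at level $k$ you have $R_j$ for all $j\ge k$, and the needed relations $R_{2k-1},R_{2k-2}$ satisfy $2k-2\ge k$ precisely for $k\ge2$), and the reduction of (d) to (c) via $2nx=nx+ny=2ny$ is clean.

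For (e)$\Rightarrow$(a) under refinement, your sketch is correct but worth making explicit, since the naive attempt---adding a complement to $x+z=y+z$ to reach $(n+1)x=nx+y$ and then peeling off copies of $x$ via (e)---only returns you to the hypothesis $2x=x+y=2y$ rather than to $x=y$. Your stated plan avoids this circularity: from $z\le nx$ refine $z=z_1+\cdots+z_n$ with each $z_i\le x$; then, since $z_i\le z\le my$, refine each $z_i=\sum_j z_{ij}$ with $z_{ij}\le y$. Now every piece satisfies $z_{ij}\le x$ and $z_{ij}\le y$, and you cancel them one at a time using your base case (at each stage $z_{ij}\le x\le x+\text{(remaining)}$ and $z_{ij}\le y\le y+\text{(remaining)}$). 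This yields (d), hence (a). It would strengthen the write-up to include this double-refinement step explicitly rather than leaving it as ``bookkeeping.''
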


There are similar equivalent conditions for strong separativity:

\begin{lemma}  \label{agop.p126}
For a monoid $M$, the following conditions are equivalent:

{\rm(a)} $M$ is strongly separative.

{\rm(b)} For any $x,y \in M$ and $n \in \Zpos$, if  $(n+1)x= nx+y$, then $x=y$.

{\rm(c)} For any $x,y,z \in M$, if $x+z = y+z$ and $z \in \langle x \rangle$, then $x=y$.

{\rm(d)} For any $x,y,z \in M$, if $x+2z = y+z$, then $x+z=y$.
\end{lemma}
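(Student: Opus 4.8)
The plan is to prove the four conditions equivalent by establishing the cycle of implications (a) $\Rightarrow$ (b) $\Rightarrow$ (c) $\Rightarrow$ (d) $\Rightarrow$ (a), in direct parallel with the treatment of separativity in Lemma \ref{agop2.1}. Three of the four links are short substitutions, and I expect the implication (a) $\Rightarrow$ (b)---upgrading the single cancellation law $2x = x+y \Rightarrow x = y$ to its $n$-fold version---to be the only step requiring genuine work, handled by induction on $n$.

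For (a) $\Rightarrow$ (b) I would argue by induction on $n$, the base case $n=1$ being exactly (a). For the inductive step, given $(n+1)x = nx + y$ with $n \ge 2$, I would add $(n-1)x$ to both sides to obtain $2(nx) = (2n-1)x + y = nx + \big((n-1)x + y\big)$, and then apply the strong separativity law (a) to the element $nx$. This yields $nx = (n-1)x + y$, which is the case $n-1$ of (b), so the induction hypothesis gives $x = y$. The point of the maneuver is to manufacture, out of the multiplied relation, a genuine instance $2a = a+b$ of strong separativity with $a = nx$.

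The remaining links should be direct. For (b) $\Rightarrow$ (c), given $x + z = y + z$ with $z \in \langle x\rangle$, I would choose $n$ and $w$ with $z + w = nx$ and add $w$ to both sides to reach $(n+1)x = nx + y$, so that (b) applies. For (c) $\Rightarrow$ (d), given $x + 2z = y + z$, I would set $a := x + z$, so that $a + z = y + z$ while $z \le a$ forces $z \in \langle a\rangle$; condition (c) then yields $a = y$, which is the desired $x + z = y$. Finally, (d) $\Rightarrow$ (a) is immediate: specializing (d) to the triple $(0,\, y,\, x)$ turns its hypothesis $2x = x + y$ into its conclusion $x = y$.

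The main obstacle is the induction in (a) $\Rightarrow$ (b); once the quantity $2(nx)$ is rewritten as $nx + ((n-1)x + y)$ so that (a) becomes applicable, everything else is bookkeeping with the algebraic order and the definition of $\langle x\rangle$.
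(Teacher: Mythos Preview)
Your proof is correct. The paper takes a slightly different route: it cites external references for the equivalence of (a), (c), and (d), and then only proves (b)$\Longleftrightarrow$(c) directly. In particular, the paper's (c)$\Rightarrow$(b) avoids induction entirely by observing that $(n+1)x = nx + y$ is the same as $x + nx = y + nx$ with $nx \in \langle x\rangle$, so (c) applies in one stroke. Your cycle (a)$\Rightarrow$(b)$\Rightarrow$(c)$\Rightarrow$(d)$\Rightarrow$(a) is instead fully self-contained, at the cost of the small induction in (a)$\Rightarrow$(b); your steps (b)$\Rightarrow$(c), (c)$\Rightarrow$(d), and (d)$\Rightarrow$(a) are exactly the arguments the paper either gives or defers to its references.
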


\begin{proof}
The equivalence of (a), (c), and (d) was noted in \cite[p.126]{AGOP}, the straightforward proofs being monoid forms of arguments for direct sums of modules given/noted in \cite[Proposition 4.2]{AOT} and \cite[Lemma 5.1]{AGOP}.

(c)$\Longrightarrow$(b): Given $(n+1)x= nx+y$, we have $x+nx = y+nx$.  Since $nx \in \langle x \rangle$, condition (c) implies $x=y$.

(b)$\Longrightarrow$(c): Suppose $x+z = y+z$ and $z \in \langle x \rangle$.  Then $z+z' = nx$ for some $z' \in M$ and $n \in \Zpos$.  Add $z'$ to both sides of $x+z = y+z$ to get $(n+1)x = nx+y$.  Condition (b) then implies $x=y$.
\end{proof}

\sectionnew{Stable rank conditions}  \label{sr.cond}

Fix a commutative monoid $M$ throughout this section.  We recall the definition of stable rank for elements of $M$ from \cite[p.122]{AGOP}, give examples exhibiting all possible values, and develop some basic properties of stable rank.

\begin{definition}  \label{def.sr}
Let $a \in M$ and $n \in \Zpos$.  We say that $a$ satisfies the \emph{$n$-stable rank condition} (in $M$) if whenever $na+x= a+y$ for some $x,y \in M$, there exists $e\in M$ such that $na=a+e$ and $e+x= y$. Observe that the $n$-stable rank condition implies the $m$-stable rank condition for all $m > n$.

The \emph{stable rank of $a$} (in $M$), denoted $\sr_M(a)$ or just $\sr(a)$ if $M$ is understood, is the least positive integer $n$ such that $a$ satisfies the $n$-stable rank condition (provided such an $n$ exists) or $\infty$ (otherwise).  

The value $\sr_M(a)$ may vary if $M$ is changed.  For instance, if $a$ cancels from sums within the submonoid $A := \{ na \mid n \in \Znn \}$, then $\sr_A(a) = 1$, regardless of the value of $\sr_M(a)$.
\end{definition}

An initial cache of examples shows that all allowable values of stable rank occur:

\begin{examples}  \label{exist.sra=?}
(1) First, take $M := \Znn \sqcup \{\infty\}$.  In this monoid, $\sr(a) = 1$ for all $a \in \Znn$, since such elements $a$ cancel from sums in $M$.  On the other hand, $\sr(\infty) = \infty$, since for any $n \in \Zpos$ we have $n\cdot\infty + \infty = \infty+0$, but there is no $e\in M$ satisfying $e+\infty = 0$.

(2) Now let $n$ be an integer $\ge2$, and let $M$ be the commutative monoid presented by two generators, $a$ and $b$, and two relations, $na = a+b$ and $2(n-1)a = 2b$.  It follows from the relations that every element of $M$ equals either $b$ or a nonnegative multiple of $a$.  Moreover, the elements $0,a,b,2a,3a,\dots$ in $M$ are all distinct.  $\bigl($There is a monoid homomorphism $M \rightarrow \Znn$ sending $a \mapsto 1$ and $b \mapsto n-1$, from which we see that $0,a,2a,3a,\dots$ are distinct and $b \ne 0$.  Also, there is a $3$-element monoid $M' := \{0,x,\infty\}$ such that $2x = \infty$, and there is a monoid homomorphism $M \rightarrow M'$ sending $a \mapsto \infty$ and $b \mapsto x$.  From this we see that $b \ne ma$ for all $m \in \Znn$.$\bigr)$ Note for later use that it follows that $M$ is conical.

Observe that $(n-1)a + a = a + b$, but there is no $e \in M$ with $(n-1)a = a+e$ and $e+a = b$.  Consequently, $\sr(a) \nleq n-1$.

Suppose $na+x = a+y$ for some $x,y \in M$.  If $x = 0$, we have $na = a+y$ and $y+x = y$.  If $x \ne 0$, either $x = b$ or $x = ma$ for some $m \in \Zpos$.  In both cases, we check that $y = (n-1)a + x$.  Since also $na = a + (n-1)a$, we conclude that $\sr(a) \le n$.  Therefore $\sr(a) = n$.

For later reference, we record that $\sr(b) = 2$.  On one hand, $b+b = b+(n-1)a$, but there is no $e \in M$ with $b = b+e$ and $e+b = (n-1)a$.  Thus, $\sr(b) >1$.  On the other hand, if $2b+x = b+y$ for some $x,y \in M$, we find that the pair of equations $2b = b+e$ and $e+x = y$ can be solved with $e := b$ (if $y=b$) or with $e := (n-1)a$ (otherwise).

(3) A different example with elements of stable rank $2$ will also be useful for later reference.  This time, let $M$ be the commutative monoid presented by two generators, $a$ and $b$, and one relation, $a+b=a$.  Clearly, every element of $M$ is either a positive multiple of $a$ or a nonnegative multiple of $b$.  These elements, namely $0,a,b,2a,2b, \dots$, are all distinct, as follows.  On one hand, there is a homomorphism $M \rightarrow \Znn$ sending $a \mapsto 1$ and $b \mapsto 0$.  Consequently, $0,a,2a,\dots$ are all distinct, and $ma \ne nb$ for all $m,n \in \Zpos$.  On the other hand, there is a homomorphism $M \rightarrow \Znn \sqcup \{\infty\}$ sending $a \mapsto \infty$ and $b \mapsto 1$, whence $0,b,2b,\dots$ are all distinct.

We claim that $\sr(ma) = 2$ for any $m \in \Zpos$.

Note that $ma+b = ma+0$ but there is no $e \in M$ satisfying $e+b = 0$.  Thus, $\sr(ma) \ne 1$.

Now consider $x,y \in M$ such that $2ma+x = ma+y$.  If $x$ is a multiple of $b$, this equation reduces to $2ma = ma+y$.  Since $2ma \ne ma$, we find that $y = ma = ma+x$.  If $x = na$ for some $n \in \Zpos$, then $(2m+n)a = ma+y$.  In this case, we find that $y = (m+n)a$, and again $y = ma+x$.  In both cases, $e := ma$ is a solution for $2ma = ma+e$ and $e+x = y$.  Therefore $\sr(ma) = 2$.
\end{examples}

The easy argument that the $n$-stable rank condition implies the $(n+1)$-stable rank condition also yields the following cancellation result, analogous to \cite[Theorem 1.2]{War}.

\begin{lemma}  \label{W1.2}
Let $a \in M$ with $\sr(a) \le n < \infty$.  If $(n+1)a + b = a + c$ for some $b,c \in M$, then $na + b = c$. Equivalently, if $a + d = a + c$ for some $d,c \in M$ with $na \le d$, then $d = c$.
\end{lemma}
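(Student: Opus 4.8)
The plan is to prove the first assertion directly from the $n$-stable rank condition, and then deduce the second (``equivalent'') form by a change of variables. The crucial observation is that the hypothesis $(n+1)a+b=a+c$ should be regrouped so that the defining condition for $\sr(a)\le n$ applies in its original form, with $n$ rather than $n+1$ copies of $a$ split off.

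First I would rewrite the hypothesis $(n+1)a+b=a+c$ as $na+(a+b)=a+c$. Since $\sr(a)\le n$, the element $a$ satisfies the $n$-stable rank condition, which I apply with $x:=a+b$ and $y:=c$. This produces an element $e\in M$ such that $na=a+e$ and $e+(a+b)=c$. Combining these and using commutativity, $c=(a+e)+b=na+b$, which is exactly the desired conclusion.

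For the equivalence of the two formulations, I would observe that the condition $na\le d$ is precisely the statement that $d=na+b$ for some $b\in M$. Under this substitution, $a+d=(n+1)a+b$, so the hypothesis $a+d=a+c$ becomes $(n+1)a+b=a+c$ and the conclusion $na+b=c$ becomes $d=c$; thus the second form is merely a transcription of the first, and it follows from what has already been shown.

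I expect no real obstacle here. The only point requiring care is the choice of grouping. It is tempting to invoke the $(n+1)$-stable rank condition (which $a$ also satisfies) directly on $(n+1)a+b=a+c$, but that yields an element $e$ with $(n+1)a=a+e$ and $e+b=c$, from which one cannot immediately recover $na+b=c$, since $e$ need not equal $na$. Splitting off only $n$ copies of $a$ and absorbing the remaining copy into the variable $x$ is what makes the argument go through cleanly in a single application of the definition.
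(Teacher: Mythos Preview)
Your proof is correct and is essentially identical to the paper's own argument: regroup $(n+1)a+b=a+c$ as $na+(a+b)=a+c$, apply the $n$-stable rank condition to obtain $e$ with $na=a+e$ and $e+(a+b)=c$, and combine to get $na+b=c$. The paper simply declares the equivalence with the second formulation ``clear,'' whereas you spell it out, but there is no difference in approach.
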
 

\begin{proof}
Given $(n+1)a + b = a + c$, we have $na + (a + b) = a + c$, so $\sr(a) \le n$ implies that there is some $e \in M$ with $na = a + e$ and $e + (a + b) = c$.  Combining these two equations yields $na + b = c$.  The equivalence with the second condition is clear.
\end{proof}

\begin{theorem}  \label{W1.9}
If $a = a_1 + \cdots + a_t$ for some $a_i \in M$, then
$$
\sr(a) \le \max(\sr(a_1),\dots,\sr(a_t)).
$$
\end{theorem}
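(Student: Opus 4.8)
The plan is to set $n := \max(\sr(a_1), \dots, \sr(a_t))$ and to show directly that $a$ satisfies the $n$-stable rank condition, which gives $\sr(a) \le n$ as desired. If some $\sr(a_i) = \infty$, then $n = \infty$ and there is nothing to prove, so I assume $n < \infty$. In particular, each $a_i$ then satisfies the $n$-stable rank condition, since $\sr(a_i) \le n$ and, by the observation in Definition \ref{def.sr}, the $\sr(a_i)$-stable rank condition implies the $n$-stable rank condition.

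The essential case is $t = 2$, say $a = a_1 + a_2$; the general statement then follows by a straightforward induction on $t$, writing $a = (a_1 + \cdots + a_{t-1}) + a_t$ and applying the two-summand case together with the inductive hypothesis $\sr(a_1 + \cdots + a_{t-1}) \le \max(\sr(a_1), \dots, \sr(a_{t-1}))$.

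For the two-summand case, suppose $na + x = a + y$. Since $na = na_1 + na_2$ and $a = a_1 + a_2$, this equation reads $na_1 + (na_2 + x) = a_1 + (a_2 + y)$, which is precisely the form to which the $n$-stable rank condition for $a_1$ applies. That yields $e_1 \in M$ with $na_1 = a_1 + e_1$ and $e_1 + na_2 + x = a_2 + y$. Regrouping the latter as $na_2 + (e_1 + x) = a_2 + y$ and applying the $n$-stable rank condition for $a_2$ produces $e_2 \in M$ with $na_2 = a_2 + e_2$ and $e_2 + e_1 + x = y$. Setting $e := e_1 + e_2$ and using commutativity, one checks that $a + e = (a_1 + e_1) + (a_2 + e_2) = na_1 + na_2 = na$ and that $e + x = e_1 + e_2 + x = y$, which is exactly what the $n$-stable rank condition for $a$ requires.

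I expect the only real work to be the bookkeeping of the successive regroupings. The argument is driven by two elementary facts: taking $n$-fold multiples distributes over the sum $a = a_1 + a_2$, and commutativity lets me isolate each $a_i$ in turn so as to invoke its stable rank condition. The one point to watch is that when peeling off $a_2$ I must carry along the error term $e_1$ produced in the first step; the payoff is that the two error terms simply add, since the defining equations $na_i = a_i + e_i$ sum to $na = a + e$. No refinement or other structural hypothesis on $M$ is needed.
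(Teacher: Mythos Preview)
Your proof is correct and follows essentially the same approach as the paper: reduce to the case $t=2$ with $n := \max(\sr(a_1),\sr(a_2)) < \infty$, apply the $n$-stable rank condition first to $a_1$ to produce $e_1$, then to $a_2$ to produce $e_2$, and take $e := e_1 + e_2$. The only difference is cosmetic---you spell out the induction on $t$ and the verification that $na = a+e$ a bit more explicitly than the paper does.
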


\begin{proof}
It suffices to deal with the case when $t = 2$ and $n := \max(\sr(a_1), \sr(a_2)) < \infty$.  

Suppose $n a + x = a + y$ for some $x,y \in M$.  Since $n a_1 + (n a_2 + x) = a_1 + (a_2 + y)$ and $\sr(a_1) \le n$, there is some $e_1 \in M$ such that $n a_1 = a_1 + e_1$ and $e_1 + (n a_2 + x) = a_2 + y$.  Then $\sr(a_2) \le n$ implies that there is some $e_2 \in M$ such that $n a_2 = a_2 + e_2$ and $e_2 + (e_1 + x) = y$.  Setting $e := e_1 + e_2$, we conclude that $n a = a + e$ and $e + x = y$.  Therefore $\sr(a) \le n$.
\end{proof}

\begin{corollary}  \label{sr+unit}
If $a,b \in M$ and $b$ is a unit, then $\sr(a+b) = \sr(a)$.
\end{corollary}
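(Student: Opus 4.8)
The plan is to deduce the corollary from Theorem \ref{W1.9}, after first pinning down the stable rank of a unit.

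First I would verify that $\sr(b) = 1$ whenever $b$ is a unit. To check the $1$-stable rank condition for $b$, suppose $1\cdot b + x = b + y$ for some $x,y \in M$; since $b$ is invertible, adding the inverse $-b$ to both sides cancels $b$ and gives $x = y$. The required witness is $e := 0$, for then $1 \cdot b = b + e$ and $e + x = x = y$. Hence $b$ satisfies the $1$-stable rank condition, so $\sr(b) = 1$. (Note that this also covers $-b$, which is a unit as well.)

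Next, since $a + b$ is a sum of the two elements $a$ and $b$, Theorem \ref{W1.9} yields
$$
\sr(a+b) \le \max(\sr(a),\sr(b)) = \max(\sr(a),1) = \sr(a),
$$
the last equality holding because $\sr(a) \ge 1$ always. For the reverse inequality I would apply the same reasoning to the decomposition $a = (a+b) + (-b)$: using that $-b$ is a unit with $\sr(-b) = 1$, Theorem \ref{W1.9} gives $\sr(a) \le \max(\sr(a+b),\sr(-b)) = \sr(a+b)$. Combining the two inequalities forces $\sr(a+b) = \sr(a)$.

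The only point requiring any argument is the first step, that units have stable rank $1$, and that is a one-line computation using invertibility to cancel; the rest is a direct double application of the already-established Theorem \ref{W1.9}, so I anticipate no real obstacle.
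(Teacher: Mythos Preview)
Your proof is correct and follows essentially the same route as the paper's: show that a unit has stable rank $1$ (the paper phrases this as ``$b$ is cancellative, hence $\sr(b)=1$'', which amounts to your one-line computation), then apply Theorem~\ref{W1.9} to $a+b = a + b$ and to $a = (a+b) + (-b)$ to obtain both inequalities.
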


\begin{proof}
Since $b$ is cancellative, $\sr(b) = 1\le \sr(a)$, and so $\sr(a+b) \le \sr(a)$.  The reverse inequality follows in the same way, because $a = (a+b)+b'$ for some unit $b'$.
\end{proof}

Theorem \ref{W1.9} of course implies that if $a \in M$ with $\sr(a) \le n$, then $\sr(ka) \le n$ for all $k \in \Zpos$.  In the case $n=1$, the conclusion $\sr(ka) = 1$ can be slightly improved:

\begin{lemma}  \label{sr(ka)=1}
Let $a \in M$ with $\sr(a) = 1$ and $k \in \Zpos$.  If $x,y \in M$ with $ka + x = ka + y$, there exists $e \in M$ with $a = a + e$ and $e + x = y$.
\end{lemma}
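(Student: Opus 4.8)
The plan is to induct on $k$. The base case $k=1$ is nothing but the $1$-stable rank condition for $a$ (Definition \ref{def.sr}), so there is nothing to prove there. The point of the lemma — and the reason it is not simply a restatement of $\sr(ka) \le \sr(a) = 1$ — is that applying the $1$-stable rank condition directly to $ka$ would only produce an $f \in M$ with $ka = ka + f$ and $f + x = y$, whereas the assertion demands the sharper relation $a = a + e$. Thus the real content is to descend from a ``unit for $ka$'' to a ``unit for $a$'', and I expect the induction to do exactly this, peeling off one copy of $a$ at each step.

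For the inductive step I would assume the statement for $k$ and take $x,y \in M$ with $(k+1)a + x = (k+1)a + y$. Rewriting this as $ka + (a+x) = ka + (a+y)$ and invoking the inductive hypothesis with $a+x$ and $a+y$ in place of $x$ and $y$ yields an $e_1 \in M$ with $a = a + e_1$ and $e_1 + (a+x) = a+y$; the latter equation is $a + (e_1 + x) = a + y$.

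Next I would apply the base case ($\sr(a) = 1$) to the equation $a + (e_1 + x) = a + y$, obtaining $e_2 \in M$ with $a = a + e_2$ and $e_2 + (e_1 + x) = y$. Setting $e := e_1 + e_2$, one checks that $a + e = (a + e_1) + e_2 = a + e_2 = a$ (using $a = a + e_1$ and then $a = a + e_2$) and that $e + x = e_2 + (e_1 + x) = y$, which closes the induction.

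The only delicate point is the bookkeeping in this last combination: it is precisely the relation $a = a + e_1$ that allows the second correction term $e_2$ to be absorbed, so that the sum $e_1 + e_2$ still satisfies $a = a + (e_1 + e_2)$. I do not anticipate any genuine obstacle beyond setting up the induction so that the inductive hypothesis is applied to the shifted pair $(a+x,\, a+y)$ and the base case to the pair $(e_1 + x,\, y)$.
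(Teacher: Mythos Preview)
Your induction is correct and the bookkeeping with $e := e_1 + e_2$ is fine.

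The paper takes a shorter route: it invokes Lemma~\ref{W1.2} (which says that $\sr(a) \le n$ and $a + d = a + c$ with $na \le d$ force $d = c$) to reduce $ka + x = ka + y$ down to $a + x = a + y$; one application of the $1$-stable rank condition then gives the desired $e$ immediately. So the paper separates the argument into a cancellation step (hidden in Lemma~\ref{W1.2}) followed by a single use of $\sr(a) = 1$, whereas you interleave the two, applying the stable rank condition twice per inductive step and then merging the resulting ``error terms'' $e_1, e_2$. Your version is self-contained and avoids citing Lemma~\ref{W1.2}, at the cost of tracking the extra combination $a + e_1 + e_2 = a$; the paper's version is terser because the iterative descent is already packaged in that lemma.
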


\begin{proof}
By Lemma \ref{W1.2}, $ka + x = ka + y$ implies $a+x = a+y$.  The result follows.
\end{proof}

Low stable rank is closely connected to the following conditions.

\begin{definition}  \label{def.cancel.etc}
An element $a \in M$ is
\begin{align*}
&\text{\emph{cancellative}} &&\text{in case} &\quad a+x &= a+y \ \implies \ x = y, &&\forall\; x,y \in M;  \\
&\text{\emph{Hermite}} &&\text{in case} &\quad 2a+x &= a+y \ \implies \ a+x = y, &&\forall\; x,y \in M;  \\
&\text{\emph{self-cancellative}} &&\text{in case} &\quad 2a &= a+y \ \implies \ a = y, &&\forall\; x,y \in M.
\end{align*}
Note that $M$ is strongly separative if and only if all its elements are self-cancellative.
\end{definition}

\begin{proposition}  \label{hermite.etc}
Let $a \in M$.

{\rm(a)} If $a$ is cancellative, then $\sr(a) = 1$.

{\rm(b)} If $\sr(a) = 1$, then $a$ is Hermite.

{\rm(c)} If $a$ is Hermite, then it is self-cancellative and $\sr(a) \le 2$.

{\rm(d)} If all elements of the set $a+M$ are self-cancellative in $M$, then $a$ is Hermite.  Consequently, if $M$ is strongly separative, then all its elements are Hermite.

{\rm(e)} Assume that $\sr(a) = 1$. If $x,y \in M$ with $a+x = a+y$, there is a unit $e \in M$ such that $a = a+e$ and $e+x = y$.

{\rm(f)} Assuming $M$ is conical, then $\sr(a) = 1$ if and only if $a$ is cancellative.
\end{proposition}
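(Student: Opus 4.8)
The plan is to treat the six assertions largely independently, reserving the two genuine tricks for (d) and (e) and dispatching the rest by direct verification of the defining conditions. For (a), given $a+x=a+y$ with $a$ cancellative, I would verify the $1$-stable rank condition with witness $e:=0$: cancellativity gives $x=y$, so $a=a+0$ and $0+x=y$ both hold. Part (b) is immediate from Lemma \ref{W1.2}: taking $n=1$ there, $\sr(a)=1$ forces $2a+x=a+y \implies a+x=y$, which is exactly the Hermite condition. For (c), the self-cancellative property is the special case $x=0$ of the Hermite condition (namely $2a=a+y\implies a=y$), and to obtain $\sr(a)\le 2$ I would check the $2$-stable rank condition directly: if $2a+x=a+y$ then Hermite gives $a+x=y$, and the witness $e:=a$ works since $2a=a+a=a+e$ and $e+x=a+x=y$.

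The first real idea is needed for (d). The plan is, given $2a+x=a+y$, to apply self-cancellativity not to $a$ but to the auxiliary element $b:=a+x$, which lies in $a+M$ and hence is self-cancellative by hypothesis. The crux is the computation $2b = 2a+2x = (2a+x)+x = (a+y)+x = b+y$, so that $2b=b+y$; self-cancellativity of $b$ then yields $b=y$, i.e.\ $a+x=y$, which is the desired Hermite condition. The consequence follows at once because, by the remark in Definition \ref{def.cancel.etc}, strong separativity of $M$ says that every element of $M$---in particular every element of $a+M$---is self-cancellative.

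Part (e) uses the $1$-stable rank condition twice. First, $\sr(a)=1$ applied to $a+x=a+y$ produces some $e\in M$ with $a=a+e$ and $e+x=y$; the remaining issue is to upgrade $e$ to a unit. The key observation is that the equation $a=a+e$ can be reread as the instance $a+e=a+0$, to which the $1$-stable rank condition applies once more, now yielding $f\in M$ with $a=a+f$ and $f+e=0$. Thus $e$ is a unit (with inverse $f$) and it already satisfies $a=a+e$ and $e+x=y$, as required. Finally, for (f) the backward implication is exactly (a) (and does not need conicality), while for the forward implication I would invoke (e) to produce a unit $e$ with $a=a+e$ and $e+x=y$; since $M$ is conical we have $U(M)=\{0\}$, so $e=0$ and hence $x=y$, showing $a$ is cancellative.

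I expect the only real obstacles to be spotting the auxiliary elements: choosing $b=a+x$ as the object of the self-cancellation argument in (d), and rereading $a=a+e$ as an equation to which the stable rank hypothesis reapplies in (e). Once those choices are made, every step is a routine manipulation of the defining equations, and (f) is merely the combination of (a) and (e) with conicality.
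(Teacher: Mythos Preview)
Your proposal is correct and follows essentially the same approach as the paper: parts (a)--(c) are dispatched directly from the definitions and Lemma~\ref{W1.2}, (d) uses the self-cancellativity of $a+x$ exactly as the paper does, (e) applies the $1$-stable rank condition twice in the same way, and (f) combines (a) and (e) with conicality. The paper is more terse but the ideas are identical.
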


\begin{proof}
(a), (b), (c) are clear from the definitions and Lemma \ref{W1.2}, and (f) follows immediately from (a), (e).

(d) Suppose that $2a+x = a+y$ for some $x,y \in M$.  Then $2(a+x) = (a+x)+y$, and self-cancellativity of $a+x$ implies that $a+x = y$, proving that $a$ is Hermite.  The stated consequence is immediate.

(e) First, $\sr(a) = 1$ implies that there exists $e \in M$ such that $a = a+e$ and $e+x = y$.  Writing $a+e = a+0$ and applying $\sr(a) = 1$ a second time, there exists $e' \in M$ with $a=a+e'$ and $e' +e=0$. Therefore $e$ is a unit in $M$.
\end{proof}

\begin{examples}  \label{sr2.not.hermite}
(1) The conical hypothesis of Proposition \ref{hermite.etc}(f) cannot be dropped. For example, take $M = A \sqcup B$ where $A$ is a nonzero abelian group, $B = (\Zpos,+)$, and $a + b = b$ for all $a \in A$ and $b \in B$.  Any $b \in B$ fails to be cancellative, since $A$ is nonzero, but $sr(b) = 1$ holds, as follows.  Suppose $b + x = b + y$ for some $x,y \in M$.  If one of $x$ or $y$ is in $A$, so is the other (since $b+b' \ne b$ for any $b' \in B$), and hence $b = b + e$ and $e + x = y$ by taking $e := y - x \in A$.  Otherwise, $x,y \in B$, whence $x = y$ and so taking $e := 0$ yields $b = b + e$ and $e + x = y$.

(2) We observe further that self-cancellativity of an element $a \in M$ does not imply that either $a$ is Hermite or $\sr(a) \le 2$.  Indeed, take $M$ to be presented by a single element $a$ subject to the relation $3a = a$; so $M = \{0, a, 2a\}$.  Here $a$ is self-cancellative since the only solution to $2a = a+y$ is $y = a$. However, $a$ is not Hermite since $2a+a = a+0$ but $a + a \ne 0$, and $\sr(a) \nleq 2$ since $2a+a = a+0$ but there is no $e \in M$ with $e+a = 0$.  In fact, $\sr(a) = \infty$, as the following lemma shows.
\end{examples}

\begin{lemma}  \label{purelyinf}
If $a \in M$ is a non-unit and $(k+1)a \le ka$ for some positive integer $k$, then $\sr(a) = \infty$.
\end{lemma}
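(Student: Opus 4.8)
The plan is to prove the statement in contrapositive spirit: I would assume $\sr(a) < \infty$ and derive that $a$ must be a unit. The central device is a reduction that replaces $a$ by $b := ka$, which converts the hypothesis $(k+1)a \le ka$ into the much cleaner statement $2b \le b$. Iterating $(k+1)a \le ka$ gives $(k+j)a \le ka$ for all $j \ge 0$, hence $ma \le ka$ whenever $m \ge k$; in particular $2b = (2k)a \le ka = b$. Moreover $\sr(b) = \sr(ka) \le \sr(a)$ by Theorem \ref{W1.9} (taking all summands equal to $a$), so $\sr(b)$ is finite; and $b$ is a non-unit, since if $ka$ were a unit then $a$ would be one too. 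Thus it suffices to treat the case $k = 1$, i.e.\ to show that a non-unit $b$ with $2b \le b$ cannot have finite stable rank.

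For that case I would argue as follows. Suppose $b$ is a non-unit, $2b \le b$, and $m := \sr(b) < \infty$. Write $2b + z = b$ for some $z \in M$ and set $w := b + z$, so that $b + w = 2b + z = b$. Since $b$ is a non-unit, $w \neq 0$ (otherwise $b + z = 0$ would make $b$ a unit). Two order facts are then needed: from $2b \le b$ an easy induction yields $nb \le b$ for every $n \ge 1$ (using $(n+1)b = nb + b \le b + b = 2b \le b$), so in particular $mb \le b$; and $w = b + z \ge b$. Combining these, $mb \le b \le w$.

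To finish, I would apply Lemma \ref{W1.2}, in its second formulation, to the element $b$ with the value $m = \sr(b)$: the equation $b + w = b + 0$ holds and $mb \le w$, so the lemma forces $w = 0$, contradicting $w \neq 0$. Hence $\sr(b)$ cannot be finite, and by the reduction $\sr(a) = \infty$ as well.

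The step I expect to require the most care is the reduction to $k = 1$, and it is worth explaining why the naive direct approach fails. If one tries to apply Lemma \ref{W1.2} to $a$ itself via $a + (ka + z) = a + (k-1)a$, with $d := ka + z$ and $c := (k-1)a$, then indeed $na \le d$, but the conclusion $d = c$ (that is, $ka + z = (k-1)a$) is entirely consistent with the hypotheses and produces no contradiction. Passing to $b = ka$ is precisely what forces the other side of the witnessing equation to be $0$, so that $d = w \neq 0 = c$ becomes immediate from $b$ being a non-unit. Getting this reduction right is the crux; the remaining computations are routine order manipulations.
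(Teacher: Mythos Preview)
Your proof is correct. The paper takes a more direct route that avoids the reduction to $k=1$: for each positive integer $n$, the iteration gives $(k+n)a \le ka$, so $(k+n)a + x = ka$ for some $x$; regrouping this as $ka + (na + x) = ka + 0$ and noting $na \le na + x$, Lemma~\ref{W1.2} (applied $k$ times, the hypothesis $na \le d$ persisting through each cancellation of a single $a$) yields $na + x = 0$, which contradicts $a$ being a non-unit. So your claim that the reduction is ``the crux'' and that the ``naive direct approach fails'' is not quite right: the specific attempt you describe---cancelling a single $a$ from $(k+1)a + z = ka$---does indeed leave a nonzero right-hand side, but cancelling all $k$ copies of $a$ from $ka + (na+x) = ka + 0$ forces the conclusion immediately, and that is exactly the paper's argument. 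Your reduction to $b = ka$ is a valid alternative that trades this iterated cancellation for one invocation of Theorem~\ref{W1.9} plus the short induction $nb \le b$; neither approach is materially harder than the other.
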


\begin{proof} 
It follows from $(k+1)a \le ka$ that $ma \le ka$ for all $m>k$.  Given a positive integer $n$, we have $(k+n)a + x = ka$ for some $x \in M$ (depending on $n$).  If $\sr(a) \le n$, then since $ka + (na+x) = ka+0$, Lemma \ref{W1.2} implies $na+x = 0$.  However, this is impossible because $a$ is not a unit.  Therefore $\sr(a) > n$ for all $n$.
\end{proof}

When $M$ has refinement, the requirements for an element of $M$ to have stable rank at most $n$ can be reduced, as follows.  The argument was given by Ara for monoids of projective modules \cite[Theorem 2.2]{Ara.stability.survey} and noted in general (unpublished correspondence).

\begin{proposition}  \label{srequiv.refmon}
Assume $M$ has refinement.  Let $a \in M$ and $n \in \Zpos$.  Then the following conditions are equivalent:

{\rm(a)} $\sr(a) \le n$.  

{\rm(b)}  Whenever $na+x = a+y$ for some $x,y \in M$, then $x \le y$.

{\rm(c)} Whenever $na+x = a+y$ for some $x,y \in M$ with $x \le a$ and $y \le na$, then $x \le y$.

{\rm(d)} Whenever $na = u+v$ and $a = u+w$ for some $u,v,w \in M$, then $w \le v$.
\end{proposition}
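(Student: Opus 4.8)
The plan is to establish the equivalence by proving the cycle $(a) \Rightarrow (b) \Rightarrow (c) \Rightarrow (d) \Rightarrow (a)$, with the refinement hypothesis entering only in the last implication. The first two implications need no real work. For $(a) \Rightarrow (b)$: if $\sr(a) \le n$ and $na + x = a + y$, the $n$-stable rank condition supplies $e$ with $e + x = y$, so $x \le y$. And $(b) \Rightarrow (c)$ is immediate, since $(c)$ is just $(b)$ restricted to those equations additionally satisfying the side constraints $x \le a$ and $y \le na$.

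For $(c) \Rightarrow (d)$, I would manufacture an instance of the hypothesis of $(c)$ out of the data of $(d)$. Starting from $na = u+v$ and $a = u+w$, adding $w$ to the first equation and $v$ to the second gives $na + w = u+v+w = a+v$, an equation of the shape $na + x = a + y$ with $x = w$ and $y = v$. The side constraints hold for free: $w \le a$ because $a = u+w$, and $v \le na$ because $na = u+v$. Condition $(c)$ then forces $w \le v$, which is exactly the conclusion of $(d)$.

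The substantive step, and the main obstacle, is $(d) \Rightarrow (a)$, where refinement is used. Given $na + x = a + y$, I would apply the refinement property to this equality of two-term sums to produce elements $z_{ij}$ with $na = z_{11} + z_{12}$, $\ x = z_{21} + z_{22}$, $\ a = z_{11} + z_{21}$, and $y = z_{12} + z_{22}$. Now $(d)$ applies with $u = z_{11}$, $v = z_{12}$, $w = z_{21}$, the required equations $na = u+v$ and $a = u+w$ being precisely the first and third relations of the array; it yields $z_{21} \le z_{12}$, say $z_{12} = z_{21} + e$. I expect this $e$ to be exactly the witness demanded by the stable rank condition: substituting gives $na = z_{11} + z_{21} + e = a + e$, while $y = z_{12} + z_{22} = e + (z_{21} + z_{22}) = e + x$. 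Hence $a$ satisfies the $n$-stable rank condition and $\sr(a) \le n$. The only delicate point is the bookkeeping in the refinement array, together with the observation that the single inequality furnished by $(d)$ is enough to reconstruct both defining equations $na = a+e$ and $e + x = y$ of stable rank; I anticipate no further obstruction.
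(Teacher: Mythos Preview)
Your proof is correct and follows essentially the same route as the paper's: the same cycle $(a)\Rightarrow(b)\Rightarrow(c)\Rightarrow(d)\Rightarrow(a)$, the same manufactured equation $na+w=a+v$ for $(c)\Rightarrow(d)$, and the same refinement array (your $z_{ij}$ are the paper's $a_1,a_2,x_1,x_2$) together with the single application of $(d)$ to produce the witness $e$ for $(d)\Rightarrow(a)$.
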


\begin{proof}
(a)$\Longrightarrow$(b) and (b)$\Longrightarrow$(c): Obvious.

(c)$\Longrightarrow$(d): From $na = u+v$ and $a = u+w$, we immediately get $a+v = na+w$.  Applying (c) with $x :=w \le a$ and $y :=v \le na$, we obtain $w \le v$.

(d)$\Longrightarrow$(a): Suppose that $na+x = a+y$ for some $x,y \in M$.  By refinement, there are decompositions $na = a_1+a_2$ and $x = x_1+x_2$ for some $a_i,x_j \in M$ with $a = a_1+x_1$ and $y = a_2+x_2$.  Then by (d), $x_1 \le a_2$, so $a_2 = x_1+e$ for some $e \in M$.  Now we have
\begin{align*}
na &= a_1+a_2 = a_1 +x_1+e = a+e  \\
y &= a_2+x_2 = x_1+e+x_2 = e+x,
\end{align*}
verifying that $\sr(a) \le n$.
\end{proof}

\sectionnew{Quotients}  \label{sr.quo}

Stable rank typically behaves poorly in the passage from a monoid to a quotient.  For example, any commutative monoid $M$ is a quotient of some direct sum $\Znn^{(I)}$, and the elements of $\Znn^{(I)}$ have stable rank $1$ while the stable ranks of elements of $M$ can be arbitrary.  However, there are certain quotients for which stable rank is reasonably well behaved, as we show in this section.  We continue to fix a commutative monoid $M$.

\begin{lemma}  \label{srM/I}
If $I$ is an o-ideal of $M$ and $a \in M$, then $\sr_{M/I}([a]) \le \sr_M(a)$.
\end{lemma}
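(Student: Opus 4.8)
The plan is to argue directly from the definition of the $n$-stable rank condition (Definition~\ref{def.sr}) together with the explicit description of the congruence $\equiv_I$. Set $n := \sr_M(a)$. If $n = \infty$ there is nothing to prove, so assume $n < \infty$; it then suffices to show that $[a]$ satisfies the $n$-stable rank condition in $M/I$, since this gives $\sr_{M/I}([a]) \le n = \sr_M(a)$.

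First I would take an arbitrary instance of the hypothesis of that condition in $M/I$: suppose $n[a] + [x] = [a] + [y]$ for some $x,y \in M$, using that every element of $M/I$ has the form $[z]$ for some $z \in M$. Rewriting, this says $[na + x]_I = [a + y]_I$, so by the definition of $\equiv_I$ there exist $c,d \in I$ with
$$
na + x + c = a + y + d
$$
in $M$. Grouping this as $na + (x+c) = a + (y+d)$ puts it in exactly the form to which the $n$-stable rank condition for $a$ in $M$ applies.

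Applying that condition yields an element $e \in M$ with $na = a + e$ and $e + (x+c) = y + d$, and the witness for the quotient will simply be $[e]$. Passing to $M/I$, the first equation gives $n[a] = [na] = [a+e] = [a] + [e]$. For the second, the relation $e + x + c = y + d$ with $c,d \in I$ is precisely the statement $e + x \equiv_I y$, i.e.\ $[e] + [x] = [y]$ in $M/I$. Hence $[a]$ satisfies the $n$-stable rank condition, as required.

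The argument is essentially bookkeeping, and I expect no genuine obstacle. The one point needing attention is to track that the comparison elements $c,d$ produced by unfolding $\equiv_I$ lie in $I$, so that the relation $e + x + c = y + d$ surviving after applying the stable rank condition in $M$ still witnesses $e + x \equiv_I y$ and therefore descends to $[e] + [x] = [y]$ in the quotient. The o-ideal hypothesis enters only through the standing fact that $\equiv_I$ is a monoid congruence, making $M/I$ a well-defined monoid; neither refinement nor conicality is needed.
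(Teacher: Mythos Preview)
Your proof is correct and follows essentially the same approach as the paper's: unfold the congruence $\equiv_I$ to lift the equation $n[a]+[x]=[a]+[y]$ to $M$, apply the $n$-stable rank condition there to obtain $e$, and push the resulting equations back down to $M/I$. The paper's version is just a terser rendition of the same argument.
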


\begin{proof}
Suppose $\sr_M(a) = n < \infty$, and consider $x,y \in M$ such that $n[a]+[x] = [a]+[y]$ in $M/I$.  Then $na+x+u = a+y+v$ for some $u,v \in I$.  Hence, there is some $e \in M$ with $na = a+e$ and $e+x+u = y+v$, so $n[a] = [a]+[e]$ and $[e]+[x] = [y]$.  Thus $\sr_{M/I}([a]) \le n$.
\end{proof}

\begin{lemma}  \label{srI.srM}
Let $I$ be an o-ideal of $M$ and $a \in I$.

{\rm(a)}  $\sr_I(a) \le \sr_M(a)$.

{\rm(b)}  If $M$ has refinement, then $\sr_I(a) = \sr_M(a)$.

{\rm(c)} If $M$ has refinement and $a$ is Hermite within $I$, then $a$ is Hermite in $M$.
\end{lemma}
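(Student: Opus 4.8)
The plan is to handle the three parts in order, with part (b) feeding into part (c), using throughout the defining property of an o-ideal: anything lying below an element of $I$ is itself in $I$. For part (a), assume $\sr_M(a) = n < \infty$ (otherwise there is nothing to prove) and take $x, y \in I$ with $na + x = a + y$. The $n$-stable rank condition in $M$ yields $e \in M$ with $na = a + e$ and $e + x = y$. Since $e \le na$ and $na \in I$, the o-ideal property gives $e \in I$, so the very same witness $e$ verifies the $n$-stable rank condition for $a$ inside $I$; hence $\sr_I(a) \le n$.

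For part (b), by (a) it suffices to prove $\sr_M(a) \le \sr_I(a)$, so I assume $\sr_I(a) = n < \infty$. First I would note that $I$ is itself a refinement monoid: any equation $x_1 + x_2 = y_1 + y_2$ in $I$ refines in $M$, and each refinement term $z_{ij}$ satisfies $z_{ij} \le x_i \in I$, hence lies in $I$. With $M$ and $I$ both refinement monoids, I invoke Proposition \ref{srequiv.refmon}(d). The key observation is that condition (d) transfers from $I$ to $M$: if $na = u + v$ and $a = u + w$ in $M$, then $u, v \le na \in I$ and $w \le a \in I$ force $u, v, w \in I$, so the $I$-version of (d) applies and gives $w \le v$. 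Thus $\sr_I(a) \le n$ yields condition (d) in $M$, whence $\sr_M(a) \le n$, establishing the reverse inequality and the desired equality.

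For part (c), I would combine (b) with self-cancellativity. Since $a$ is Hermite within $I$, Proposition \ref{hermite.etc}(c) applied to the monoid $I$ shows that $a$ is self-cancellative in $I$ and that $\sr_I(a) \le 2$; by (b), $\sr_M(a) \le 2$ as well. Now given $2a + x = a + y$ in $M$, the $2$-stable rank condition supplies $e \in M$ with $2a = a + e$ and $e + x = y$. As in the earlier parts, $e \le 2a \in I$ forces $e \in I$, and then self-cancellativity of $a$ in $I$ turns $2a = a + e$ into $e = a$. Substituting back gives $y = e + x = a + x$, which is precisely the Hermite condition for $a$ in $M$.

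The membership claims $e \in I$ are immediate from the o-ideal axiom, so the only step demanding genuine care is part (b): I must confirm that $I$ inherits refinement and that the decomposition criterion of Proposition \ref{srequiv.refmon} passes faithfully from $I$ to $M$. Once that is secured, part (c) needs no further refinement argument, the real content being the self-cancellativity extracted from the Hermite hypothesis through Proposition \ref{hermite.etc}.
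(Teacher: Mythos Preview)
Your proof is correct. Part (a) is identical to the paper's argument. For (b) and (c), however, you take a different route.

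For (b), the paper argues directly: given $ma+x=a+y$ in $M$, it refines this equation to obtain $ma=a_1+a_2$, $x=x_1+x_2$ with $a_1+x_1=a$ and $a_2+x_2=y$; then $x_1,a_2\in I$, and the rewritten equation $ma+x_1=a+a_2$ lives entirely in $I$, so $\sr_I(a)\le m$ produces the required $e$. Your approach instead passes through the decomposition criterion of Proposition~\ref{srequiv.refmon}(d): you observe that any instance of (d) in $M$ automatically has $u,v,w\in I$ by the o-ideal property, so (d) transfers from $I$ to $M$ and the equivalence (a)$\Leftrightarrow$(d) finishes. This is clean and conceptual; note that you actually only need refinement in $M$ for the direction (d)$\Rightarrow$(a), since (a)$\Rightarrow$(d) holds in any monoid, so verifying refinement in $I$ is harmless but not strictly needed.

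For (c), the paper simply says ``proved in the same manner as (b)'', i.e.\ a parallel direct refinement argument (compare Lemma~\ref{ref+hermite}). You instead deduce $\sr_M(a)\le 2$ from (b), then use self-cancellativity of $a$ in $I$ to force the witness $e$ in $2a=a+e$ to equal $a$. This is a nice alternative that makes explicit use of (b) rather than rerunning its proof.
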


\begin{proof}
(a) Suppose $\sr_M(a) = n < \infty$, and consider $x,y \in I$ such that $na+x = a+y$.  Then there exists $e \in M$ such that $na = a+e$ and $e+x = y$.  Since $e \le na$ (or since $e \le y$), we have $e \in I$.  Thus, $\sr_I(a) \le n$.

(b) Suppose $\sr_I(a) = m < \infty$, and consider $x,y \in M$ such that $ma+x = a+y$.  Then $ma = a_1+a_2$ and $x = x_1+x_2$ for some $a_i,x_j \in M$ such that $a_1+x_1 = a$ and $a_2+x_2 = y$.  Since $x_1 \le a$ and $a_2 \le ma$, we have $x_1,a_2 \in I$.  Moreover,
$$
ma + x_1 = a_1 + a_2 + x_1 = a + a_2 \,,
$$
and so there exists $e \in I$ with $ma = a+e$ and $e+x_1 = a_2$.  Since 
$$
e+x = e+x_1+x_2 = a_2+x_2 = y,
$$
we conclude that $\sr_M(a) \le m$.

(c) This is proved in the same manner as (b).
\end{proof}

In certain quotients by congruences, stable ranks can be controlled.  A rather trivial example is the \emph{stable equality} congruence, given by
$$
u \equiv v \ \iff \ u+w = v+w\ \; \text{for some}\ \; w \in M.
$$
For this congruence, $M/{\equiv}$ is cancellative, whence $\sr([a]_{\equiv}) = 1$ for all $a \in M$.

We present three other instances.  In the first, the quotient $M/{\equiv}$ is known as the \emph{maximal antisymmetric quotient of $M$}, antisymmetry being taken with respect to the algebraic order.  The other two examples concern congruences modelled on near-isomorphism and multi-isomorphism of abelian groups (see Examples \ref{stC/sim.expls}).

The monoid $M$ is said to be \emph{stably finite} provided $a+x=a \implies x=0$, for any $a,x \in M$.

\begin{proposition}  \label{max.antisymm.quo}
Let $\equiv$ be the congruence on $M$ defined by
$$
x \equiv y \ \iff \ x \le y \le x.
$$

{\rm(a)}  If $M$ is a stably finite refinement monoid, then so is $M/{\equiv}$.

{\rm(b)}  $\sr_{M/{\equiv}}([a]_\equiv) \le \sr_M(a)+1$ for all $a \in M$.

{\rm(c)}  If $M$ is stably finite or $M/{\equiv}$ has refinement, then $\sr_{M/{\equiv}}([a]_\equiv) \le \sr_M(a)$ for all $a \in M$.

{\rm(d)} If $M$ has refinement, then $\sr_{M/{\equiv}}([a]_\equiv) \ge \sr_M(a)$ for all $a \in M$.

{\rm(e)}  If $M$ is a stably finite refinement monoid, then $\sr_{M/{\equiv}}([a]_\equiv) = \sr_M(a)$ for all $a \in M$.
\end{proposition}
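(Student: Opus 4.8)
The plan is to handle the five parts after first recording two elementary facts that organize everything. The first is that the algebraic order descends transparently: for $u,v\in M$ one has $[u]_\equiv\le[v]_\equiv$ in $M/{\equiv}$ if and only if $u\le v$ in $M$ (forward, $[u]\le[v]$ gives $u+z\equiv v$ for some $z$, whence $u\le u+z\le v$; the reverse is immediate). The second is that, when $M$ is stably finite, $x\equiv y$ forces $x+s=y$ and $y+t=x$ with $x+s+t=x$, so $s+t=0$, $s$ is a unit, and $\equiv$ is exactly the relation ``differ by a unit''; in particular adding a unit to an element does not change its $\equiv$-class. These turn statements about $\equiv$-classes into statements about $\le$ and about units in $M$. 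I would prove the parts in the order (b), (d), (c), (a), and then deduce (e).

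For (b), set $n:=\sr_M(a)<\infty$ and suppose $(n+1)[a]+[x]=[a]+[y]$; the two inequalities hidden in this class-equality lift to $(n+1)a+(x+p)=a+y$ and $(n+1)a+x=a+(y+q)$ for suitable $p,q\in M$. Applying Lemma \ref{W1.2} to each (using $\sr_M(a)\le n$) collapses the leading $(n+1)a$ to $na$, giving $na+x+p=y$ and $na+x=y+q$; these say precisely $na+x\le y\le na+x$, i.e.\ $n[a]+[x]=[y]$. Then $e:=na$ witnesses $(n+1)[a]=[a]+[e]$ and $[e]+[x]=[y]$, so $[a]$ satisfies the $(n+1)$-stable rank condition. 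For (d), assume $M$ has refinement and put $m:=\sr_{M/{\equiv}}([a])$ (if $m=\infty$ there is nothing to prove). I would verify condition (b) of Proposition \ref{srequiv.refmon} for $a$ in $M$: given $ma+x=a+y$, pass to $M/{\equiv}$, apply the $m$-stable rank condition there to obtain $[e]$ with $[e]+[x]=[y]$, lift to $e+x\le y$, and conclude $x\le e+x\le y$. Since $M$ has refinement, Proposition \ref{srequiv.refmon} gives $\sr_M(a)\le m$.

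Part (c) splits into two cases, the goal in each being to sharpen the bound in (b) from $n+1$ to $n:=\sr_M(a)$. Starting from $n[a]+[x]=[a]+[y]$, lift to $na+(x+p)=a+y$ and $a+y+q=na+x$, and apply the $n$-stable rank condition of $a$ in $M$ to the first equation to get $e_0$ with $na=a+e_0$ and $y=e_0+x+p$; this already forces $x\le y$. If $M/{\equiv}$ has refinement, this is enough: condition (b) of Proposition \ref{srequiv.refmon} holds for $[a]$ in $M/{\equiv}$, so $\sr_{M/{\equiv}}([a])\le n$. If instead $M$ is stably finite, I would substitute $y=e_0+x+p$ into the second lifted equation to obtain $a+e_0+x+p+q=a+e_0+x$; stable finiteness forces $p+q=0$, so $p$ is a unit and hence $[y]=[e_0+x]=[e_0]+[x]$, which together with $n[a]=[a]+[e_0]$ verifies the $n$-stable rank condition in $M/{\equiv}$.

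Finally, for (a) stable finiteness of $M/{\equiv}$ is direct: $[a]+[x]=[a]$ lifts to $a+x+w=a$, so $x+w=0$ and $[x]=0$. The refinement descent is the main obstacle, since an equality of classes $[x_1]+[x_2]=[y_1]+[y_2]$ only lifts to the approximate equation $x_1+x_2\equiv y_1+y_2$, whereas the Riesz property in $M$ needs an exact one. Here the unit description of $\equiv$ does the work: stable finiteness gives $y_1+y_2=x_1+x_2+s$ with $s$ a unit, so replacing $y_1$ by the $\equiv$-equivalent $y_1+s$ yields the exact equation $x_1+x_2=(y_1+s)+y_2$ in $M$; refining it via the $z_{ij}$ and projecting (using $[y_1+s]=[y_1]$) gives the required refinement in $M/{\equiv}$. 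Part (e) is then immediate: $M$ is a refinement monoid, so (d) gives $\sr_{M/{\equiv}}([a])\ge\sr_M(a)$, and $M$ is stably finite, so the second case of (c) gives the reverse inequality.
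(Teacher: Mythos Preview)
Your proof is correct and follows essentially the same approach as the paper's: the same use of Lemma~\ref{W1.2} for (b), the same appeal to Proposition~\ref{srequiv.refmon}(b) for (d) and for the refinement case of (c), and the same ``differ by a unit'' description of $\equiv$ under stable finiteness to handle (a) and the stably finite case of (c). The only differences are cosmetic: in (c) the paper invokes the unit description at the outset (writing $na+x+b=a+y$ with $b$ a unit) rather than deducing that your $p$ is a unit after the fact, and in (a) the paper refines $a_0+a_1=b_0+b_1+b_2$ as a $2\times 3$ array rather than absorbing the unit into one summand. One small slip: from $y_1+y_2=x_1+x_2+s$ you should replace $y_1$ by $y_1+s'$ where $s'$ is the inverse of $s$, not by $y_1+s$, to obtain $x_1+x_2=(y_1+s')+y_2$.
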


\begin{proof}
We abbreviate $[-]_{\equiv}$ to $[-]$ throughout the proof.

(a) Suppose $[a]+[b] = [a]$ for some $a,b \in M$.  Then $a+b+c = a$ for some $c \in M$, and stable finiteness implies $b+c=0$.  In particular, $b \le 0 \le b$, and thus $[b] = [0]$.  This shows that $M/{\equiv}$ is stably finite.

Note that when $x,y \in M$ with $x \equiv y$, we have $x+a=y$ and $y+b=x$ for some $a,b \in M$, whence $x+a+b = x$, so stable finiteness implies $a+b=0$.  Thus, we can say that $x \equiv y$ if and only if $x = y+b$ for some unit $b \in M$.

Refinement was proved in \cite[Proposition 2.4]{MDS} under the assumption that $M$ is cancellative.  We utilise the same argument.

Suppose that $[a_0]+[a_1] = [b_0]+[b_1]$ in $M/{\equiv}$ for some $a_i,b_j \in M$.  There is a unit $b_2 \in M$ such that 
$$
a_0+a_1 = b_0+b_1+b_2 \,.
$$
By refinement, there are elements $c_{ij} \in M$ such that
$$
a_i = c_{i0}+c_{i1}+c_{i2} \quad \forall\; i=0,1 \qquad \text{and} \qquad b_j = c_{0j}+c_{1j} \quad \forall\; j=0,1,2.
$$
Since $b_2$ is a unit, so are $c_{02}$ and $c_{12}$.  Consequently,
$[a_i] = [c_{i0}]+[c_{i1}]$ for $i=0,1$.
Since also $[b_j] = [c_{0j}]+[c_{1j}]$ for $j=0,1$, refinement in $M/{\equiv}$ is established.

(b) Let $a \in M$ with $\sr_M(a) = n < \infty$, and consider $x,y \in M$ such that $(n+1)[a]+[x] = [a]+[y]$.  On one hand, $(n+1)a+x+b = a+y$ for some $b \in M$, whence Lemma 2.3 implies $na+x+b = y$, and so $na+x \le y$.  On the other hand, $(n+1)a+x = a+y+c$ for some $c \in M$, whence Lemma 2.3 implies $na+x = y+c$, yielding $y \le na+x$.  Consequently, $n[a]+[x] = [y]$, which means we can solve $(n+1)[a] = [a]+e$ and $e+[x] = [y]$ with $e := n[a]$.  Thus, $\sr_{M/{\equiv}}([a]) \le n+1$.

(c)  Suppose that $\sr_M(a) = n < \infty$; we need to show that $\sr_{M/{\equiv}}([a]) \le n$.

If $M$ is stably finite and $n[a]+[x] = [a]+[y]$ for some $x,y \in M$, then, as noted in (a), $na+x+b = a+y$ for some unit $b \in M$.  Now $\sr_M(a) \le n$ implies the existence of some $e \in M$ such that $na = a+e$ and $e+x+b = y$.  Then $n[a] = [a]+[e]$ and, since $b$ is a unit, $[e]+[x] = [y]$.  This establishes $\sr_{M/{\equiv}}([a]) \le n$ in the stably finite case.

Assume now that $M/{\equiv}$ has refinement.  Proposition \ref{srequiv.refmon} says that in order to prove $\sr_{M/{\equiv}}([a]) \le n$, it suffices to show that whenever $x,y \in M$ with $n[a]+[x] = [a]+[y]$, we have $[x] \le [y]$.  Now $na+x+b = a+y$ for some $b \in M$, and $\sr_M(a) \le n$ implies there is some $e \in M$ for which $e+x+b = y$, which yields $[x]+[e+b] = [y]$ and $[x] \le [y]$ as required.

(d)  Suppose that $\sr_{M/{\equiv}}([a]) = n < \infty$; we need to prove that $\sr_M(a) \le n$.  By Proposition \ref{srequiv.refmon}, it suffices to show that whenever $x,y \in M$ with $na+x = a+y$, we have $x \le y$.  Now $n[a]+[x] = [a]+[y]$, and since $\sr_{M/{\equiv}}([a]) = n$, there is some $e \in M$ such that $[e]+[x] = [y]$.  Consequently, $e+x \le y$ and thus $x \le y$, as desired.

(e) This is immediate from (c) and (d).
\end{proof}

\begin{proposition}  \label{M.mod.near-isom}
Let $S$ be a nonempty subset of $\Zpos$ such that $\Zpos \cdot S \subseteq S$, and let $\equiv$ be the congruence on $M$ defined by the rule
$$
u \equiv v \ \iff \ mu = mv\ \; \text{for some}\ \; m \in S.
$$

{\rm(a)} If $a \in M$ has finite stable rank, then $[a]_{\equiv}$ is Hermite, hence $\sr([a]_{\equiv}) \le 2$.

{\rm(b)} If $M$ is conical, then $M/{\equiv}$ is conical and $\sr([a]_{\equiv}) \le \sr(a)$ for all $a \in M$.
\end{proposition}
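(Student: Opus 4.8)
The plan is to treat (a) and (b) separately, first noting that $\equiv$ really is a congruence: reflexivity and symmetry are immediate, transitivity uses the hypothesis $\Zpos\cdot S\subseteq S$ (if $mu=mv$ and $m'v=m'w$ with $m,m'\in S$, then $(m'm)u=(m'm)w$ with $m'm\in S$), and compatibility with addition is clear. Since every element of $M/{\equiv}$ has the form $[z]$ for some $z\in M$, an equation such as $2[a]+[x]=[a]+[y]$ in $M/{\equiv}$ means precisely that $m(2a+x)=m(a+y)$ for some $m\in S$, i.e. $2ma+mx=ma+my$ in $M$. The crucial flexibility I will exploit is that $m$ may be replaced by any multiple $nm\in S$, so I may take the multiplier as large as I like.

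For (a), set $n:=\sr_M(a)<\infty$ and aim to show $[a]$ is \emph{Hermite}; the bound $\sr([a])\le2$ then follows from Proposition \ref{hermite.etc}(c). Given $2[a]+[x]=[a]+[y]$, choose $m\in S$ with $2ma+mx=ma+my$, and after replacing $m$ by $nm$ assume $m\ge n$. I will then cancel copies of $a$ one at a time using Lemma \ref{W1.2}: by induction on $i$ one obtains $(2m-i)a+mx=(m-i)a+my$ for $0\le i\le m$, the inductive step being a single application of Lemma \ref{W1.2} with base $a$, which is legitimate because the left residual $(2m-i-1)a+mx$ has $a$-coefficient $2m-i-1\ge m\ge n$, so that $na$ lies below it. At $i=m$ this yields $ma+mx=my$, that is $m(a+x)=my$ with $m\in S$, which says exactly $[a]+[x]=[y]$. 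Hence $[a]$ is Hermite.

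For (b), first observe that $M/{\equiv}$ is conical: if $[u]+[v]=[0]$ then $mu+mv=0$ for some $m\in S$, and conicality of $M$ forces $mu=mv=0=m\cdot0$, so $[u]=[v]=[0]$. For the inequality $\sr_{M/{\equiv}}([a])\le\sr_M(a)$ I will split on the value $n:=\sr_M(a)$. If $n=\infty$ there is nothing to prove, and if $n\ge2$ is finite the inequality is already delivered by part (a), since $\sr_{M/{\equiv}}([a])\le2\le n$. The one remaining case is $n=1$: here conicality of $M$ together with Proposition \ref{hermite.etc}(f) makes $a$ cancellative in $M$, whence each multiple $ma$ is cancellative as well. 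Then $[a]+[x]=[a]+[y]$ unwinds to $ma+mx=ma+my$ for some $m\in S$; cancelling $ma$ gives $mx=my$, i.e. $[x]=[y]$, so $[a]$ is cancellative in $M/{\equiv}$ and $\sr_{M/{\equiv}}([a])=1$ by Proposition \ref{hermite.etc}(a).

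The main obstacle is the cancellation in (a). Cancelling the single element $ma$ directly (via Lemma \ref{W1.2} with base $ma$) is not available, since that would require $n(ma)\le ma+mx$, which need not hold; the workable route is to cancel the small element $a$ repeatedly, and the whole point of enlarging $m$ is to guarantee that the $a$-coefficient of the left-hand residual never drops below $n$ during the descent, keeping every step valid. For (b), the structural subtlety worth flagging is that $\equiv$ is \emph{not} the congruence attached to an o-ideal, so Lemma \ref{srM/I} does not apply directly; recognising that part (a) already covers every case with $\sr_M(a)\ge2$, and isolating $\sr_M(a)=1$ via conicality, is what makes the bound fall out cleanly.
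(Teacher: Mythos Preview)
Your proof is correct and follows essentially the same approach as the paper's. The only stylistic difference is in part (a): the paper replaces $m$ by $mn$ and then cancels the block $mna$ in one stroke (writing $mna + (mna+mnx) = mna + mny$ and invoking Lemma~\ref{W1.2} with $na \le mna+mnx$, which implicitly iterates the cancellation), whereas you make the step-by-step descent explicit via the chain $(2m-i)a+mx=(m-i)a+my$. Part (b) is handled identically in both.
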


\begin{proof}
Abbreviate $[-]_{\equiv}$ to $[-]$.

(a) Let $a \in M$ with $\sr(a) = n < \infty$, and suppose that $2 [a] + [x] = [a] + [y]$ for some $x,y \in M$.  Then $m(2a+x) = m(a+y)$ for some $m \in S$, and so $mna + mna + mnx = mna + mny$.  Since $na \le mna + mnx$, Lemma \ref{W1.2} implies that $mna + mnx = mny$, and consequently $[a] + [x] = [y]$, because $mn \in S$.  This proves that $[a]$ is Hermite, and $\sr([a]) \le 2$ follows.

(b)  If $u,v \in M$ with $[u] + [v] = [0]$, then $mu+mv = 0$ for some $m \in S$.  Conicality of $M$ forces $u=v=0$, whence $[u] = [v] = [0]$, showing that $M/{\equiv}$ is conical.

It only remains to prove the final statement in case $\sr(a) = 1$, which under current hypotheses means that $a$ is cancellative.  If 
$[a] + [x] = [a] + [y]$ for some $x,y \in M$, then $m(a+x) = m(a+y)$ for some $m \in S$, whence $mx = my$ and thus $[x] = [y]$.  Therefore $[a]$ is cancellative and $\sr([a]) = 1$.
\end{proof}

\begin{proposition}  \label{M.mod.multi-isom}
Let $S$ be a nonempty subset of $\ZZ_{\ge2}$, and let $\equiv$ be the congruence on $M$ defined by the rule
$$
u \equiv v \ \iff \ mu = mv\ \; \text{for all}\ \; m \in S.
$$

{\rm(a)} Let $a \in M$ with finite stable rank, and set $n := \max(2,\sr(a))$.  If $n [a]_{\equiv} + [x]_{\equiv} = [a]_{\equiv} + [y]_{\equiv}$ for some $x,y \in M$, then $(n-1)[a]_{\equiv} + [x]_{\equiv} = [y]_{\equiv}$.  In particular, if $\sr(a) \le 2$, then $[a]_{\equiv}$ is Hermite.

{\rm(b)} $\sr([a]_{\equiv}) \le \max(2,\sr(a))$ for all $a \in M$. 

{\rm(c)} If $M$ is conical, then $M/{\equiv}$ is conical and $\sr([a]_{\equiv}) \le \sr(a)$ for all $a \in M$.
\end{proposition}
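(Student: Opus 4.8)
The plan is to handle the three parts in order, with essentially all the substance concentrated in part (a); parts (b) and (c) will then follow formally. First I would record that $\equiv$ really is a congruence (it is visibly an equivalence relation, and $u\equiv v$ gives $m(u+z)=mu+mz=mv+mz=m(v+z)$ for every $m\in S$, so it respects addition). For part (a), set $n:=\max(2,\sr(a))$ and unwind the hypothesis $n[a]_{\equiv}+[x]_{\equiv}=[a]_{\equiv}+[y]_{\equiv}$ into the family of monoid equations
\[
mna+mx=ma+my \qquad (m\in S).
\]
Fixing $m\in S$, I would rewrite $mna=ma+m(n-1)a$ so that the equation reads $ma+d=ma+c$ with $d:=m(n-1)a+mx$ and $c:=my$. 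Because $m\ge 2$ and $n\ge 2$ force $m(n-1)\ge n$, we have $na\le m(n-1)a\le d$, and the cancellation form of Lemma \ref{W1.2} can be applied once for each of the $m$ copies of $a$: at every stage the left summand still contains $d\ge na$, so the order hypothesis persists, and after $m$ cancellations we reach $d=c$, i.e.\ $m(n-1)a+mx=my$. Since this holds for all $m\in S$, it says precisely $(n-1)[a]_{\equiv}+[x]_{\equiv}=[y]_{\equiv}$. The ``in particular'' clause is then immediate, since $\sr(a)\le 2$ gives $n=2$ and the displayed implication becomes the Hermite condition for $[a]_{\equiv}$.

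Part (b) follows with no extra computation: if $\sr(a)=\infty$ there is nothing to prove, and otherwise, with $n:=\max(2,\sr(a))$, the conclusion of part (a) lets me take $e:=(n-1)a$, which satisfies $[a]_{\equiv}+[e]_{\equiv}=n[a]_{\equiv}$ and $[e]_{\equiv}+[x]_{\equiv}=[y]_{\equiv}$, verifying the $n$-stable rank condition and hence $\sr([a]_{\equiv})\le n$. For part (c), I would first check conicality of $M/{\equiv}$: if $[u]_{\equiv}+[v]_{\equiv}=[0]_{\equiv}$ then $mu+mv=0$ for any chosen $m\in S$, and conicality of $M$ forces $mu=mv=0$ and then $u=v=0$. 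For the bound, part (b) already gives $\sr([a]_{\equiv})\le\sr(a)$ whenever $\sr(a)\ge 2$, so only the case $\sr(a)=1$ remains; there Proposition \ref{hermite.etc}(f) (using conicality of $M$) makes $a$ cancellative, and then any equation $[a]_{\equiv}+[x]_{\equiv}=[a]_{\equiv}+[y]_{\equiv}$ unwinds to $ma+mx=ma+my$ for all $m\in S$, from which cancelling the copies of $a$ one at a time yields $mx=my$ for all $m\in S$, i.e.\ $[x]_{\equiv}=[y]_{\equiv}$. Thus $[a]_{\equiv}$ is cancellative and $\sr([a]_{\equiv})=1$ by Proposition \ref{hermite.etc}(a).

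I expect the only real obstacle to be the bookkeeping at the heart of part (a): seeing that the congruence converts the single relation into the scaled family $m(na+x)=m(a+y)$, and that the inequality $m(n-1)\ge n$ --- which is exactly where the hypotheses $m\ge 2$ and $n\ge 2$ enter --- is what permits the repeated use of Lemma \ref{W1.2} to strip off the $m$ surplus copies of $a$. Everything else is formal once this step is in place.
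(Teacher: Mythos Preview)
Your proposal is correct and follows essentially the same route as the paper. The heart of the argument---unwinding $n[a]_{\equiv}+[x]_{\equiv}=[a]_{\equiv}+[y]_{\equiv}$ to $mna+mx=ma+my$ for every $m\in S$, invoking the inequality $m(n-1)\ge 2(n-1)\ge n$ to ensure $na\le m(n-1)a+mx$, and then using Lemma~\ref{W1.2} to strip off the $ma$---is exactly what the paper does; you are simply more explicit about the fact that Lemma~\ref{W1.2} must be iterated $m$ times, whereas the paper cancels $ma$ in one sentence. Parts (b) and (c) likewise match the paper (which for (c) just points back to the analogous argument in Proposition~\ref{M.mod.near-isom}(b), \emph{mutatis mutandis}).
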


\begin{proof}
Abbreviate $[-]_{\equiv}$ to $[-]$.

(a) Let $a \in M$ with $n := \max(2,\sr(a)) < \infty$, and suppose that $n [a] + [x] = [a] + [y]$ for some $x,y \in M$.  Then $m(na+x) = m(a+y)$ for all $m \in S$.   For any $m \in S$, we have $m(n-1) \ge 2(n-1) \ge n$ because $n \ge 2$, whence $na \le m(n-1)a + mx$.  Since
$$
ma + m(n-1)a + mx = ma + my
$$
and $\sr(a) \le n$, Lemma \ref{W1.2} implies that $m(n-1)a + mx = my$.  Consequently, we obtain $(n-1)[a] + [x] = [y]$. 

(b) This follows from (a).

(c) The proof of Proposition \ref{M.mod.near-isom}(b) may be used, \emph{mutatis mutandis}.
\end{proof}

In case $S$ is an infinite subset of $\ZZ_{\ge2}$, Proposition \ref{M.mod.multi-isom} also holds with respect to the congruence $\equiv$ defined by the rule
$$
u \equiv v \ \iff \ mu = mv\ \; \text{for all sufficiently large}\ \; m \in S,
$$
with the same proof.

\sectionnew{Stable rank of multiples}  \label{sr.mult}

We continue to fix a commutative monoid $M$.

A famous theorem of Vaserstein \cite[Theorem 3]{Vas} established a formula for the stable rank of a matrix ring $M_k(S)$ in terms of $k$ and the stable rank of $S$.  It thus provides a formula for the stable rank of the endomorphism ring of a direct sum of $k$ copies of a module $A$ in terms of $k$ and the stable rank of the endomorphism ring of $A$.  Within the monoid of isomorphism classes of finitely generated projective modules over an exchange ring, this yields a formula for $\sr(k[A])$ in terms of $\sr([A])$.  We prove that this formula is valid in any refinement monoid (Theorem \ref{sr.ka.refine}), and that it holds up to an error of $1$ in any commutative monoid (Theorem \ref{W1.12}).  Many of the steps parallel Warfield's module-theoretic proof of Vaserstein's theorem \cite[Section 1]{War}.

We first observe that the stable ranks of positive multiples of any element of $M$ form a (non-strictly) decreasing sequence.

\begin{lemma}  \label{sr.decrease}
If $a \in M$ and $k,l \in \Zpos$ with $k \le l$, then $\sr(ka) \ge \sr(la)$.
\end{lemma}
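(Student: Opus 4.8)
The plan is to show that if $ka$ satisfies the $n$-stable rank condition for some $n$, then $la$ satisfies it too, which gives $\sr(la) \le \sr(ka)$, i.e.\ the stated inequality. Since $k \le l$, we can write $l = k + d$ for some $d \in \Znn$. The most direct route exploits Theorem \ref{W1.9}, which bounds the stable rank of a sum: since $la = (ka) + (da)$ (interpreting $0 \cdot a = 0$, which has stable rank $1$), we would get $\sr(la) \le \max(\sr(ka), \sr(da))$. However, this is not quite what we want on its own, because $\sr(da)$ could in principle exceed $\sr(ka)$, so I cannot immediately conclude $\sr(la) \le \sr(ka)$ from this alone.

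So instead I would argue directly from the definition of the stable rank condition. Let $n := \sr(ka)$ and assume $n < \infty$ (the case $n = \infty$ being trivial). I want to verify that $la$ satisfies the $n$-stable rank condition. Suppose $n(la) + x = la + y$ for some $x, y \in M$. The key manipulation is to rewrite this so as to expose a copy of $ka$ on each side that I can cancel via the $n$-stable rank condition for $ka$. Writing $l = k + d$, the left side is $nka + nda + x$ and the right side is $ka + da + y$. I would group these as $n(ka) + (nda + x) = ka + (da + y)$, so that $x' := nda + x$ and $y' := da + y$ satisfy $n(ka) + x' = ka + y'$. Applying $\sr(ka) = n$ yields some $e \in M$ with $n(ka) = ka + e$ and $e + x' = y'$.

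The remaining step is to convert this $e$ into a witness for the $n$-stable rank condition for $la$, i.e.\ to produce $e' \in M$ with $n(la) = la + e'$ and $e' + x = y$. From $n(ka) = ka + e$, adding $nda$ to both sides gives $n(la) = nka + nda = ka + e + nda$, and I want this to equal $la + e' = ka + da + e'$; this suggests the candidate value, though $e'$ is determined by a cancellation that may not hold outright. The main obstacle is precisely this: recovering a clean witness $e'$ from $e$ without invoking cancellation that the monoid need not possess. I expect the cleanest resolution is to instead set $e' := e + (n-1)da$ and check directly that $n(la) = la + e'$ (which follows by adding $(n-1)da$ appropriately) and that $e' + x = y$ (which should follow from $e + x' = y'$, i.e.\ $e + nda + x = da + y$, after rearranging). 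Verifying these two equations is the heart of the argument, and I would present it as a short explicit computation; the subtlety is bookkeeping the multiples of $da$ so that everything balances without requiring any cancellation.
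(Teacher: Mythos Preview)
Your setup is fine up to and including the point where you obtain $e \in M$ with $n(ka) = ka + e$ and $e + nda + x = da + y$, and your candidate $e' := e + (n-1)da$ does satisfy $n(la) = la + e'$. The gap is in the second equation: from $e + nda + x = da + y$ you want $e + (n-1)da + x = y$, and that is precisely cancelling $da$ from both sides. This is not mere bookkeeping; it is a cancellation step that a general commutative monoid need not allow. Rewriting the known equation as $n(da) + (e+x) = da + y$, what you would need is Lemma~\ref{W1.2} applied to $da$, which requires $\sr(da) \le n-1$, and nothing in the hypotheses gives you control over $\sr(da)$. So the ``rearranging'' you anticipate cannot be carried out.

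The paper's proof avoids this by two adjustments. First, it reduces to the case $l = k+1$, so $d = 1$. Second, and crucially, it adds $(k-1)a$ to both sides of $n(la) + x = la + y$ \emph{before} invoking the $n$-stable rank condition. This produces an equation of the form $ka + u = ka + v$ with $n(ka) \le u$, so Lemma~\ref{W1.2} (applied to $ka$, whose stable rank we do know) cancels the extra $ka$ and yields $n(ka) + (n-1)a + x = ka + y$. Now the $n$-stable rank condition for $ka$ gives $e$ with $n(ka) = ka + e$ and $e + (n-1)a + x = y$ directly, and $e' := (n-1)a + e$ works with no further cancellation required. The missing idea in your attempt is this preliminary use of Lemma~\ref{W1.2} on $ka$ to absorb the surplus copies of $a$ before, rather than after, extracting $e$.
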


\begin{proof}
If $\sr(ka) = \infty$, there is nothing to prove, so we assume that $\sr(ka) = n < \infty$.  It suffices to deal with the case when $l = k+1$.

Suppose $n(la) + x = la + y$ for some $x,y \in M$.  Add $(k-1)a$ to both sides of this equation and write the result as
$$
ka + (nk+n-1)a + x = ka + ka + y.
$$
Since $n(ka) \le (nk+n-1)a$, Lemma \ref{W1.2} implies that $nka + (n-1)a + x = ka + y$.  Since $\sr(ka) = n$, it follows that there is some $e \in M$ with $nka = ka + e$ and $e + (n-1)a + x = y$.  Adding $na$ to both sides of the penultimate equation and setting $e' := (n-1)a + e$, we obtain $n(la) = la + e'$ and $e' + x = y$, proving that $\sr(la) \le n$. 
\end{proof}

If an element $a \in M$ has finite stable rank, the decreasing sequence of stable ranks of multiples of $a$ eventually reaches $2$ or $1$, as follows.  

\begin{proposition}  \label{sr=n->na.hermite}
If $a \in M$ and $\sr(a) = n < \infty$, then $ka$ is Hermite for all $k \ge n$.  In particular, $ka$ is self-cancellative and $\sr(ka) \le 2$. 
\end{proposition}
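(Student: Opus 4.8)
The plan is to prove the Hermite property for $ka$ directly and then invoke Proposition~\ref{hermite.etc}(c): once $ka$ is known to be Hermite, that proposition immediately yields the ``in particular'' claims that $ka$ is self-cancellative and $\sr(ka) \le 2$. So the whole task reduces to the Hermite condition.

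Fix $k \ge n$ and suppose $2(ka) + x = ka + y$ for some $x,y \in M$; the goal is $ka + x = y$. First I would rewrite the left-hand side as $(n+1)a + \bigl[(2k-n-1)a + x\bigr]$ and the right-hand side as $a + \bigl[(k-1)a + y\bigr]$, which is legitimate because $2k \ge 2n \ge n+1$ and $k \ge 1$. Lemma~\ref{W1.2} (the $n$-stable rank condition for $a$) then strips off one copy of $a$ and drops the coefficient from $n+1$ to $n$, giving
$$
(2k-1)a + x = (k-1)a + y.
$$

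The heart of the argument is to cancel the surplus $k-1$ copies of $a$ one at a time, using the equivalent (cancellation) form of Lemma~\ref{W1.2}: if $a + d = a + c$ with $na \le d$, then $d = c$. Writing the current identity as $pa + x = qa + y$, note that $p - q = k$ is an invariant of each cancellation step. I would peel one $a$ from each side whenever the residual term $(p-1)a$ on the left still dominates $na$, that is, whenever $p - 1 \ge n$. Iterating from $(p,q) = (2k-1,\, k-1)$ down to $(p,q) = (k,\, 0)$ produces exactly the desired equation $ka + x = y$; the edge case $k = n = 1$ needs no cancellation at all, since the displayed equation is already $a + x = y$.

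The point to watch — and the only place the hypothesis $k \ge n$ is genuinely used — is the final cancellation, where $p = k+1$ and the admissibility condition reads $p - 1 = k \ge n$. All earlier steps have strictly larger $p$, so $na \le (p-1)a$ holds throughout, and each step also has $q \ge 1$ as required. I do not anticipate any real obstacle beyond this bookkeeping: the delicate part is simply to phrase the repeated cancellation so that the single inequality $k \ge n$ suffices at every step.
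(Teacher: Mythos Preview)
Your proof is correct and follows essentially the same route as the paper: both arguments reduce $2(ka)+x = ka+y$ to $(2k-1)a + x = (k-1)a + y$ and then cancel the surplus $(k-1)$ copies of $a$ via Lemma~\ref{W1.2}, with the hypothesis $k \ge n$ used precisely at the final cancellation. The only cosmetic difference is that the paper reaches $(2k-1)a + x = (k-1)a + y$ by invoking the $n$-stable rank condition directly (producing an auxiliary $e$ with $na = a+e$) rather than via Lemma~\ref{W1.2}, and then groups the left side as $(k-1)a + (ka + x)$ so that the single inequality $na \le ka$ covers all $k-1$ cancellations at once---slightly slicker bookkeeping than your step-by-step tracking of $p-1 \ge n$, but the same idea.
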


\begin{proof}
Let $k \ge n$ be an integer, and suppose $2(ka)+x = ka+y$ for some $x,y \in M$.  Then 
$$
na + (2k-n)a + x = a + (k-1)a + y.
$$
Since $\sr(a) = n$, there is some $e \in M$ with $na = a+e$ and $e + (2k-n)a + x = (k-1)a + y$.  Observe that
$$
e + (2k-n)a + x = a+e + (2k-n-1)a+x = (2k-1)a + x,
$$
whence $(k-1)a + ka + x = (k-1)a + y$.  Since $na \le ka$, Lemma \ref{W1.2} implies that $ka + x = y$, proving that $ka$ is Hermite.  The remaining conclusions now follow from Proposition \ref{hermite.etc}(c).
\end{proof}

In Proposition \ref{sr=n->na.hermite}, we proved that if $a \in M$ and $\sr(a) = n < \infty$, then $na$ is Hermite.  Generally, smaller positive multiples of $a$ are not Hermite, or even self-cancellative.
 For instance, let $M$ and $a$ be as in Example \ref{exist.sra=?}(2).  Then $\sr(a) = n$, while $2(n-1)a = (n-1)a + b$ but $(n-1)a \ne b$, so that $(n-1)a$ is not self-cancellative.  Now fix an integer $k \ge 3$, and let $M_2,\dots,M_k$ be monoids as in Example \ref{exist.sra=?}(2) corresponding to $n=2,\dots,k$.  Each $M_n$ has a generator $a_n$ such that $\sr_{M_n}(a_n) = n$ while $(n-1)a_n$ is not self-cancellative.  Set $M := \prod_{n=2}^k M_n$ and $a := (a_2,\dots,a_k) \in M$.  Then $\sr_M(a) = k$, while $a,2a,\dots,(k-1)a$ all fail to be self-cancellative.

It is interesting to note that by the upcoming Theorem \ref{W1.12},  we will have $\sr((n-1)a) = 2$ for $a \in M$ with $\sr(a) = n \in \ZZ_{\ge2}$, even though $(n-1)a$ might not be self-cancellative.  The least positive multiple of $a$ that can possibly have stable rank $1$ is thus $na$.  By Proposition \ref{sr=n->na.hermite}, the stable ranks of the multiples of $a$ eventually stabilize to either $1$ or $2$ (and by Theorem \ref{W1.12} we will see that such stabilization does occur by $na$ at the latest).

\begin{lemma}
 \label{srb.squeeze}
Suppose that $a,b\in M$ satisfy  $a\le b \le na$ for some positive integer $n$. Then $\sr (b)\le \sr(a)$.
\end{lemma}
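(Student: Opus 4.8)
The plan is to verify directly that $b$ satisfies the $m$-stable rank condition of Definition \ref{def.sr}, where $m := \sr(a)$; we may assume $m < \infty$, as otherwise there is nothing to prove. First I would record the data from the two inequalities: since $a \le b$, write $b = a + c$ for some $c \in M$, and since $b \le na$, write $na = b + d$ for some $d \in M$. Together these give the single identity $a + (c + d) = na$, which is the only consequence of the hypotheses I will need.

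Now suppose $mb + x = b + y$ for some $x, y \in M$; the goal is to produce $e \in M$ with $mb = b + e$ and $e + x = y$. The naive move---substituting $b = a + c$ and stripping off the single $a$ on the right via the stable rank condition for $a$---fails, because the copy of $c$ inside the right-hand $b = a + c$ survives and one is left having to cancel $c$, which need not be cancellative. I expect this to be the main obstacle. The remedy is to use the upper bound first to clear the right-hand side: adding $d$ to both sides of $mb + x = b + y$ yields $mb + d + x = na + y$, whose right-hand side is now a clean multiple of $a$.

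Next I would rewrite the left-hand side entirely in terms of $a$. Substituting $b = a + c$ gives $mb + d + x = ma + (m-1)c + (c + d) + x$, and regrouping one copy of $a$ with $c + d$ through the identity $a + (c + d) = na$ converts this into $(m + n - 1)a + (m-1)c + x$. Hence
$$(m + n - 1)a + (m-1)c + x = na + y.$$
Since the coefficient of $a$ on the left exceeds that on the right by $m - 1 \ge 0$, I would strip off the surplus copies of $a$ one at a time using the cancellation form of Lemma \ref{W1.2}: at each of these $n - 1$ steps the retained left-hand summand still dominates $ma$, so the hypothesis of Lemma \ref{W1.2} is automatically satisfied, and the equation collapses to
$$ma + (m-1)c + x = a + y.$$

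Finally I would apply the $m$-stable rank condition for $a$ to this equation, with $(m-1)c + x$ in the role of the first variable and $y$ in the role of the second, obtaining $e_0 \in M$ with $ma = a + e_0$ and $e_0 + (m-1)c + x = y$. Setting $e := e_0 + (m-1)c$, the second relation is exactly $e + x = y$, while $b + e = a + c + e_0 + (m-1)c = (a + e_0) + mc = ma + mc = mb$; thus $b$ satisfies the $m$-stable rank condition and $\sr(b) \le m = \sr(a)$. The decisive point is that clearing the right-hand side \emph{before} invoking the stable rank condition lets the leftover $c$ be absorbed harmlessly into $e$, instead of presenting itself as an irreducible cancellation.
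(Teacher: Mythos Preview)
Your proof is correct and follows essentially the same approach as the paper's own argument: write $b = a + c$ and $na = b + d$, add $d$ to the equation $mb + x = b + y$, rewrite the left side using $a + c + d = na$, cancel the surplus copies of $a$ via Lemma~\ref{W1.2}, apply the $m$-stable rank condition for $a$, and then set $e := e_0 + (m-1)c$. The paper's proof (with $k$, $p$, $q$ in place of your $m$, $c$, $d$) groups the equation as $(n-1)a + [ka + (k-1)p + x] = (n-1)a + [a + y]$ and invokes Lemma~\ref{W1.2} to strip the $(n-1)a$ in one sentence, whereas you phrase it as $n-1$ successive single-copy cancellations; these are the same maneuver.
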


\begin{proof}
We may assume that $\sr(a) = k < \infty$.

 Write $b=a+p$ and $na= b+q$ for some $p,q\in M$. Observe that $a+p+q= na$.
Suppose that $kb+x = b+y$ for some $x,y\in M$. Adding $q$ to this equation, we get  \begin{equation}  \label{kbqx}
kb+q+x = na +y.
\end{equation}
 Now we have
 $$
 kb+q = ka +kp+q = ka+(p+q) + (k-1)p = ka+(n-1)a+ (k-1)p.
 $$
 Hence, \eqref{kbqx} can be written as
 $$
(n-1)a + [ka +(k-1)p+x] = (n-1)a + [a+y].
 $$
 Since $ka \le ka +(k-1)p+x$, we can use Lemma \ref{W1.2} to get
 $$
 ka + (k-1)p + x = a + y.
 $$
 Consequently, there is some $e \in M$ such that $ka= a+e$ and $e + (k-1)p+x= y$.
 Moreover observe that
 $$
 b+ [e+(k-1)p] = a+p+e+(k-1)p = (a+e)+ kp= ka+kp= kb.
 $$  
 Therefore $\sr (b)\le k = \sr (a)$, as desired.
 \end{proof}
 
 The case $b = na$ of Lemma \ref{srb.squeeze} says that $\sr(na) \le \sr(a)$ for all positive integers $n$, which also follows from either Theorem \ref{W1.9} or Lemma \ref{sr.decrease}.  We obtain a much tighter upper bound for $\sr(na)$ in Theorem \ref{W1.12}.

\begin{lemma}  \label{srma.sra}
If $a \in M$ and $m \in \Zpos$, then $\sr(a) \le m \cdot \sr(ma)$.
\end{lemma}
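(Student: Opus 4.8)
The plan is to set $r := \sr(ma)$ (assuming $r < \infty$, since otherwise there is nothing to prove) and to verify directly that $a$ satisfies the $(mr)$-stable rank condition; this yields $\sr(a) \le mr = m \cdot \sr(ma)$ immediately. The guiding observation is simply that $r(ma) = (mr)a$, so an instance of the $(mr)$-stable rank condition for $a$ can be converted, after a suitable shift, into an instance of the $r$-stable rank condition for the element $b := ma$, whose stable rank we control by hypothesis.

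The first step is to take $x,y \in M$ with $(mr)a + x = a + y$ and rewrite the left-hand side as $rb + x$. Adding $(m-1)a$ to both sides then produces $rb + \bigl(x + (m-1)a\bigr) = ma + y = b + y$, which is precisely the defining equation for the $r$-stable rank condition applied to $b$ with test elements $x + (m-1)a$ and $y$. Since $\sr(b) = \sr(ma) = r$, there is some $e' \in M$ with $rb = b + e'$ and $e' + x + (m-1)a = y$.

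It then remains to repackage the witness $e'$ for $b$ into a witness for $a$. From $rb = b + e'$ we obtain $(mr)a = ma + e' = a + \bigl((m-1)a + e'\bigr)$, so the choice $e := (m-1)a + e'$ gives $(mr)a = a + e$; moreover $e + x = (m-1)a + e' + x = y$ by the second equation. Hence $a$ satisfies the $(mr)$-stable rank condition, as desired, and no refinement or other structural hypothesis on $M$ is needed.

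I do not expect a genuine obstacle here, as the argument is a short bookkeeping computation; the one point requiring care is the final step, where the leftover multiple $(m-1)a$ must be absorbed into $e'$ so that the single element $e := (m-1)a + e'$ simultaneously satisfies both required identities, $(mr)a = a + e$ and $e + x = y$.
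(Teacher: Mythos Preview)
Your proof is correct and essentially identical to the paper's own argument: both add $(m-1)a$ to the equation $(mr)a + x = a + y$ to obtain an instance of the $r$-stable rank condition for $ma$, extract a witness, and then absorb the $(m-1)a$ shift into that witness. The only differences are notational (the paper uses $k$ for your $r$ and swaps the roles of $e$ and $e'$).
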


\begin{proof}
If $\sr(ma) = \infty$, the result holds with the usual convention that $m \cdot \infty = \infty$.

Now assume that $\sr(ma) = k < \infty$, and suppose that $kma + x = a + y$ for some $x,y \in M$.  Then $k(ma) + x + (m-1)a = (ma) + y$, whence there is some $e \in M$ such that $kma = ma + e$ and $e + x + (m-1)a = y$.  Setting $e' := (m-1)a + e$, we obtain $kma = a + e'$ and $e'+x = y$, verifying that $\sr(a) \le km$.
\end{proof}

\begin{theorem}  \label{srM(a)}
Let $a \in M$ and $n \in \Zpos$.

{\rm(a)} If $\sr(a) < \infty$, then all elements of $M(a)$ have finite stable rank.

{\rm(b)} $M(a)$ contains an element with stable rank $\le n$ if and only if $\sr(ka) \le n$ for some $k > 0$, if and only if $\sr(ka) \le n$ for $k \gg 0$.
\end{theorem}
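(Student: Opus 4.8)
The plan is to derive everything from the two squeeze-type estimates already in hand, namely Lemma \ref{srb.squeeze} (if $u \le v \le Nu$ then $\sr(v) \le \sr(u)$) and Lemma \ref{srma.sra} ($\sr(u) \le m\cdot\sr(mu)$), together with the description of the archimedean component: an element $b$ lies in $M(a)$ precisely when there are $m,n \in \Zpos$ with $b \le ma$ and $a \le nb$.

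For part (a), I would start from $b \in M(a)$ and fix such $m,n$. Multiplying $b \le ma$ by $n$ gives $nb \le nma$, and combined with $a \le nb$ this sandwiches $nb$ as $a \le nb \le (nm)a$. Lemma \ref{srb.squeeze} then yields $\sr(nb) \le \sr(a) < \infty$. The one genuinely useful move is now to pass from the multiple $nb$ back to $b$ itself: Lemma \ref{srma.sra} gives $\sr(b) \le n\cdot\sr(nb) \le n\cdot\sr(a)$, which is finite. This proves (a).

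For part (b), label the three assertions (i) $M(a)$ contains an element of stable rank $\le n$, (ii) $\sr(ka) \le n$ for some $k>0$, and (iii) $\sr(ka) \le n$ for all $k \gg 0$. The implications (iii)$\Rightarrow$(ii)$\Rightarrow$(i) are immediate, the second because $a \le ka$ and $ka \le ka$ force $ka \in M(a)$ for every $k \ge 1$. The substance is (i)$\Rightarrow$(iii). Here I would take $b \in M(a)$ with $\sr(b) \le n$ and fix $p,q \in \Zpos$ with $b \le pa$ and $a \le qb$. For any $k \ge p$ we have $b \le pa \le ka$, while $a \le qb$ gives $ka \le (kq)b$; hence $b \le ka \le (kq)b$. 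Applying Lemma \ref{srb.squeeze} with $u = b$ and $v = ka$ produces $\sr(ka) \le \sr(b) \le n$ for all $k \ge p$, which is exactly (iii).

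The only point requiring care, and what I would flag as the main obstacle, is keeping the direction of the squeeze correct: Lemma \ref{srb.squeeze} bounds the stable rank of the \emph{larger} element by that of the smaller, so in (a) one must squeeze $nb$ (not $b$) between $a$ and a multiple of $a$ and then invoke Lemma \ref{srma.sra} to recover a bound on $\sr(b)$, whereas in (i)$\Rightarrow$(iii) one squeezes $ka$ between $b$ and a multiple of $b$. Verifying that the relevant sandwich inequalities actually hold (in particular $b \le ka$ for $k \ge p$) is routine.
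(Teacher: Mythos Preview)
Your argument is correct and follows essentially the same route as the paper: part (a) is identical (squeeze a multiple of $b$ between $a$ and a multiple of $a$ via Lemma~\ref{srb.squeeze}, then recover $\sr(b)$ from Lemma~\ref{srma.sra}), and part (b) uses the same squeeze $b \le ka \le (kq)b$. The only cosmetic difference is that the paper proves (i)$\Rightarrow$(ii) and then invokes Lemma~\ref{sr.decrease} for (ii)$\Rightarrow$(iii), whereas you bypass Lemma~\ref{sr.decrease} by establishing (i)$\Rightarrow$(iii) directly for all $k \ge p$.
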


\begin{proof}
(a) Given $b \in M(a)$, there exist $l,m \in \Zpos$ such that $b \le la$ and $a \le mb$.  Then $a \le mb \le lma$, and so Lemma \ref{srb.squeeze} implies that $\sr(mb) \le \sr(a) < \infty$.  Consequently, Lemma \ref{srma.sra} shows that $\sr(b) \le m \cdot \sr(mb) < \infty$.

(b) The reverse direction of the first equivalence holds \emph{a priori}, and the second equivalence follows from Lemma \ref{sr.decrease}.  It only remains to show that if there exists $b \in M(a)$ with $\sr(b) \le n$, then $\sr(ka) \le n$ for some $k>0$.  There exist $k,m \in \Zpos$ such that $b \le ka$ and $a \le mb$.  Since $b \le ka \le kmb$, Lemma \ref{srb.squeeze} yields $\sr(ka) \le \sr(b) \le n$, as desired.
\end{proof}

\begin{examples}  \label{arch.diff.sr}
In general, elements in the same archimedean component of $M$ need not have the same stable rank.  As we will see, knowing the finite stable rank of one element in an archimedean component does not generally limit the stable ranks of other elements.

(1) Let $M$ be as in Example \ref{exist.sra=?}(2) for some $n \ge 3$.  The generator $a \in M$ has stable rank $n$, and the archimedean component $M(a)$ also contains $na$, which has stable rank $2$ as follows.  Note first that since $a$ is not cancellative, neither is $na$, which implies $\sr(na) > 1$ because $M$ is conical.  On the other hand, $\sr(na) \le 2$ by Proposition \ref{sr=n->na.hermite}.

(2) This time, fix an integer $n \ge 2$, and let $M$ be presented with generators $a$ and $b$ and relations  $na+b = na$ and $2b = 0$.  The distinct elements of $M$ are $ma$ for $m \in \Znn$ together with $ma+b$ for $0< m< n$.  To see this, define a relation $\sim$ on $N := \Znn \times (\ZZ/2\ZZ)$ as follows:
$$
x \sim y \iff x= y\ \text{or}\ \exists\ k \in \ZZ_{\ge \mbox{$n$}}\ \text{and}\ p,q \in \ZZ/2\ZZ\ \text{such that}\ x = (k,p),\ y = (k,q).
$$
Then $\sim$ is a congruence on $N$, and $N/{\sim} \cong M$.

Now $M$ has two archimedean components, namely $M(b) = \{0,b\} = U(M)$ and $M(a) = M \setminus U(M)$. We claim that $\sr(a) = n$ while $\sr(na) = 1$.

First, consider $x,y \in M$ such that $na+x = na+y$.  Write $x = ma+c$ and $y = m'a+c'$ for some $m,m' \in \Znn$ and $c,c' \in U(M)$.  Then $na+x = (n+m)a$ and $na+y = (n+m')a$, from which we see that $m = m'$.  There is some $e \in U(M)$ such that $e+c = c'$.  Thus, $na = na+e$ and $e+x = y$, proving that $\sr(na) = 1$.

Next, note that $(n-1)a +(a+b) = a+ (n-1)a$, but there is no $e \in M$ satisfying $(n-1)a = a+e$ and $e+(a+b) = (n-1)a$.  Thus, $\sr(a) > n-1$.  On the other hand, $\sr(a) \le n\cdot\sr(na) = n$ by Lemma \ref{srma.sra}, and thus $\sr(a) = n$.
\end{examples}  

Working toward tight upper bounds for stable ranks of multiples, we adapt Warfield's proof of \cite[Theorem 1.11]{War} to a monoid form.  Given $a \in M$ and positive integers $k$, $l$, we consider the following condition, which might be called the \emph{$(k,l)$-stable rank condition for $a$}.
\begin{itemize}
\item[{$\sr_{k,l}[a]$}] If $ka+x = la+y$ for some $x,y \in M$, there exists $e \in M$ such that $ka = la+e$ and $e+x=y$.
\end{itemize}
Of course, $\sr_{k,1}[a]$ holds if and only if $\sr(a) \le k$.

\begin{theorem}  \label{W1.11}
Let $a \in M$ with $\sr(a) < \infty$.  There is a unique nonnegative integer $m_{a,M}$ such that for any integers $k \ge l \ge 1$, the condition $\sr_{k,l}[a]$ holds if and only if
$$
k \ge \sr(a) \quad \text{and} \quad k-l \ge m_{a,M} \,.
$$
Moreover, $m_{a,M} \le \sr(a) - 1$.
\end{theorem}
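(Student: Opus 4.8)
Write $n := \sr(a)$. The plan is to determine the \emph{good region} $G := \{(k,l) : k \ge l \ge 1 \text{ and } \sr_{k,l}[a] \text{ holds}\}$ and show it equals $\{(k,l) : k \ge l \ge 1,\ k \ge n,\ k-l \ge m\}$ for the right $m$. I would first record two structural principles. (M) \emph{Decreasing $l$ preserves the condition}: if $\sr_{k,l}[a]$ holds and $l \ge 2$, then $\sr_{k,l-1}[a]$ holds. This is proved by adding a copy of $a$: given $ka + x = (l-1)a + y$, rewrite as $ka + (a+x) = la + y$, apply $\sr_{k,l}[a]$ to produce $e$ with $ka = la + e$ and $(a+e)+x = y$, and take $e' := a+e$. (D) \emph{On the region $k \ge n$ the condition is invariant under the diagonal shift}: for $k \ge n$ and $l \ge 1$, one has $\sr_{k,l}[a] \iff \sr_{k+1,l+1}[a]$.

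The heart of the argument is (D), and the key tool is Lemma \ref{W1.2}. Since $\sr(a) = n$ and $k \ge n$ gives $na \le ka \le ka + x$, any equation $(k+1)a + x = (l+1)a + y$ reduces, by cancelling a single copy of $a$, to $ka + x = la + y$. With this, both directions of (D) are immediate: for $\sr_{k,l}[a] \Rightarrow \sr_{k+1,l+1}[a]$, reduce $(k+1)a + x = (l+1)a + y$ to $ka + x = la + y$, apply $\sr_{k,l}[a]$ to get $e$ with $ka = la + e$ and $e+x = y$, and note $(k+1)a = (l+1)a + e$; for the converse, add $a$ to $ka + x = la + y$, apply $\sr_{k+1,l+1}[a]$ to get $e$ with $(k+1)a = (l+1)a + e$ and $e+x = y$, then cancel one copy of $a$ (valid as $na \le ka$) to reach $ka = la + e$. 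I expect this cancellation step --- recognizing that Lemma \ref{W1.2} licenses removing a copy of $a$ exactly on the region $k \ge n$ --- to be the main obstacle, since it is the sole place where the hypothesis $\sr(a)=n$ is genuinely used and without which diagonal invariance would fail.

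With (M) and (D) in hand I would assemble the result as follows. First, $G \subseteq \{k \ge n\}$: if $\sr_{k,l}[a]$ holds, iterating (M) down to $l=1$ yields $\sr_{k,1}[a]$, which holds if and only if $k \ge n$. Next, on the region $k \ge n$ the truth of $\sr_{k,l}[a]$ depends only on $j := k-l$, because (D) connects any two pairs $(k,l),(k',l')$ with $k,k' \ge n$ and $k-l = k'-l'$ by repeated diagonal shifts that stay within $k \ge n$. Call a level $j \ge 0$ \emph{good} if $\sr_{k,l}[a]$ holds for the (equivalently, every) pair with $k \ge n$ and $k-l = j$. The good levels are upward closed, since from a good pair one decreases $l$ via (M) to raise the level by one; and every level $j \ge n-1$ is good, witnessed by $(j+1,1)$, for which $\sr_{j+1,1}[a]$ holds because $j+1 \ge n$.

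Hence the good levels form a set $\{j : j \ge m\}$ for a unique least good level $m =: m_{a,M}$, with $m \le n-1$ because all levels $\ge n-1$ are good. Unwinding definitions gives, for $k \ge l \ge 1$, that $\sr_{k,l}[a]$ holds if and only if $k \ge n$ and $k-l \ge m$, the desired characterization. Uniqueness of $m$ follows since $G$ determines it intrinsically as $m = \min\{k-l : (k,l) \in G\}$, the minimum being attained at $(n, n-m)$, which lies in $G$ precisely because $m \le n-1$.
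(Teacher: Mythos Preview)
Your proof is correct and follows essentially the same route as the paper: both establish that decreasing $l$ preserves the condition (your (M)), and that on the region $k \ge \sr(a)$ the condition is invariant under diagonal shift (your (D)), with Lemma~\ref{W1.2} doing the cancellation work in both directions of (D). Your packaging via ``good levels'' is slightly more conceptual than the paper's phrasing in terms of the minimum value of $k-l$ for each fixed $k$, but the logic is identical.
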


\begin{proof}  
First we show that if $\sr_{k,l}[a]$ holds for some $k \ge l \ge2$, then $\sr_{k,l-1}[a]$ also holds.  Suppose that $ka+x = (l-1)a+y$ for some $x,y \in M$.  Then $ka+(a+x) = la+y$.  From the hypothesis, there is some $e \in M$ with $ka = la+e$ and $e+(a+x) = y$.  Taking $e' := e+a$, then $ka = (l-1)a + e'$ and $e'+x = y$.

A consequence of the previous paragraph is that whenever $\sr_{k,l}[a]$ holds for some $k \ge l \ge1$, then $\sr_{k,1}[a]$ also holds.  This forces $k \ge \sr(a)$.

On the other hand, when $k \ge \sr(a)$, the condition $\sr_{k,1}[a]$ holds.  Consequently, for each integer $k \ge \sr(a)$, there is a greatest integer $l \in [1,k]$ such that $\sr_{k,l}[a]$ holds.  In other words, treating $k$ as fixed, but allowing $l$ to vary, there is a minimum value for $k-l$ where $\sr_{k,l}[a]$ holds.  The existence of $m_{a,M}$ is equivalent to the claim that this minimum value of $k-l$ remains stable as $k \ge \sr(a)$ varies.

To start verifying the claim, we next show that if $\sr_{k,l}[a]$ holds for $k\ge l\ge 1$ and $k \ge \sr(a)$, then $\sr_{k+1,l+1}[a]$ holds.  Suppose $(k+1)a + x = (l+1)a + y$ for some $x,y \in M$.  From Lemma \ref{W1.2}, $ka+x = la+y$.  By hypothesis, there exists some $e \in M$ with $ka = la+e$ and $e+x = y$.  Adding $a$ to the penultimate equality, we are done.

To finish the claim, we show that if $\sr_{k,l}[a]$ holds with $k \ge \sr(a)+1$ and $k \ge l \ge 2$, then $\sr_{k-1,l-1}[a]$ holds.  Suppose that $(k-1)a+x = (l-1)a+y$ for some $x,y \in M$.  Adding $a$ to both sides and using the hypothesis, there exists some $e \in M$ with $ka = la+e$ and $e+x = y$.  Applying Lemma \ref{W1.2} to the penultimate inequality yields $(k-1)a = (l-1)a+e$.  The claim is thus verified, proving the existence of $m_{a,M}$.  

Finally, note that since $\sr_{k,1}[a]$ holds with $k=\sr(a)$, we must have $m_{a,M} \le \sr(a) - 1$.
\end{proof}

 In what follows, $\lfloor\cdot\rfloor$ and $\lceil\cdot\rceil$ denote the standard floor and ceiling functions.  We will make use of the well-known identity
$$
\biggl\lceil \frac{n}{l} \biggr\rceil = 1 + \left\lfloor \frac{n-1}{l} \right\rfloor \qquad \forall\; n,l \in \ZZ,\; l>0
$$
(e.g., \cite[Ch.3, Exercise 12]{GKP}).

\begin{corollary}  \label{srla.via.maM}
Let $a \in M$ with $\sr(a) < \infty$, let $l \in \Zpos$, and let $m_{a,M}$ be as in Theorem {\rm\ref{W1.11}}.  Then $\sr(la)$ is the smallest positive integer $p$ such that $pl \ge \sr(a)$ and $(p-1)l \ge m_{a,M}$.  In other words,
$$
\sr(la) = \max \left( \left\lceil \frac{\sr(a)}{l} \right\rceil, \, 1+ \biggl\lceil \frac{m_{a,M}}{l} \biggr \rceil \right).
$$
\end{corollary}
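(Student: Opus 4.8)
The plan is to recognize that the statement is simply a repackaging of Theorem \ref{W1.11} once we identify the ordinary stable rank condition for the element $la$ with an appropriate $(k,l)$-stable rank condition for $a$. The crucial observation is as follows. Fix a positive integer $p$. Unwinding Definition \ref{def.sr}, the element $la$ satisfies the $p$-stable rank condition precisely when, whenever $p(la)+x = (la)+y$ for some $x,y \in M$, there is some $e \in M$ with $p(la) = (la)+e$ and $e+x=y$. Since $p(la) = (pl)a$, this is \emph{verbatim} the condition $\sr_{pl,l}[a]$. As $\sr(la)$ is by definition the least positive integer $p$ for which $la$ satisfies the $p$-stable rank condition, we conclude that $\sr(la)$ equals the least $p \in \Zpos$ for which $\sr_{pl,l}[a]$ holds.

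Next I would invoke Theorem \ref{W1.11}. Because $p \ge 1$ forces $pl \ge l$, the theorem applies with the pair $(pl, l)$ in place of $(k,l)$, and it tells us that $\sr_{pl,l}[a]$ holds if and only if $pl \ge \sr(a)$ and $pl - l \ge m_{a,M}$, i.e. $(p-1)l \ge m_{a,M}$. Combining this with the previous paragraph yields the first assertion: $\sr(la)$ is the smallest positive integer $p$ satisfying both $pl \ge \sr(a)$ and $(p-1)l \ge m_{a,M}$.

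Finally I would convert the two integer inequalities into the closed form. Since $p$ ranges over integers, $pl \ge \sr(a)$ is equivalent to $p \ge \lceil \sr(a)/l \rceil$, and $(p-1)l \ge m_{a,M}$ is equivalent to $p-1 \ge \lceil m_{a,M}/l \rceil$, that is $p \ge 1 + \lceil m_{a,M}/l \rceil$. The least integer $p$ meeting both lower bounds is their maximum, giving
$$
\sr(la) = \max\left( \left\lceil \frac{\sr(a)}{l} \right\rceil, \, 1 + \left\lceil \frac{m_{a,M}}{l} \right\rceil \right),
$$
as claimed. The only genuinely substantive step is the identification in the first paragraph; everything thereafter is direct bookkeeping, and the sole point requiring a moment's care is checking the hypothesis $pl \ge l$ needed to apply Theorem \ref{W1.11}, which holds automatically since $p \ge 1$.
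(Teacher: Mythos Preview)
Your proof is correct and follows essentially the same route as the paper's: identify the $p$-stable rank condition for $la$ with $\sr_{pl,l}[a]$, then invoke Theorem \ref{W1.11}. The paper additionally remarks up front that $\sr(la)<\infty$ (citing Theorem \ref{W1.9}), but in your argument this is implicit in the fact that the displayed maximum is a finite integer.
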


\begin{proof}
Note that $\sr(la)$ is finite by, e.g., Theorem \ref{W1.9}.  
By definition of stable rank, $\sr(la)$ is the smallest positive integer $p$ such that $\sr_{pl,l}[a]$ holds.  The first statement of the theorem is thus immediate from Theorem \ref{W1.11}, and the second follows.
\end{proof}

We can now prove that an analog of Vaserstein's formula holds, up to an error of $1$, in any commutative monoid. 

\begin{theorem}  \label{W1.12}
If $a \in M$ with $\sr(a) = n < \infty$ and $l \in \Zpos$, then
\begin{equation}  \label{sr.ka}
1 + \left\lfloor \frac{n-1}{l} \right\rfloor \le \sr(la) \le 1 + \left\lceil \frac{n-1}{l} \right\rceil.
\end{equation}
In particular, if $l \mid n-1$ then $\sr(la) = 1 + \frac{n-1}{l}$.
\end{theorem}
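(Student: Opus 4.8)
The plan is to read off both inequalities directly from the exact formula established in Corollary \ref{srla.via.maM}, using the bound on $m_{a,M}$ from Theorem \ref{W1.11} together with the ceiling--floor identity quoted just before that corollary. I would write $n := \sr(a)$ and abbreviate $m := m_{a,M}$, so that Corollary \ref{srla.via.maM} reads
$$
\sr(la) = \max \left( \left\lceil \frac{n}{l} \right\rceil, \, 1 + \left\lceil \frac{m}{l} \right\rceil \right),
$$
and recall from Theorem \ref{W1.11} that $0 \le m \le n-1$. Everything then reduces to elementary manipulation of this maximum.

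For the lower bound, I would observe that the first entry of the maximum already dominates, so $\sr(la) \ge \lceil n/l \rceil$, and the identity $\lceil n/l \rceil = 1 + \lfloor (n-1)/l \rfloor$ rewrites this as exactly the desired lower bound $1 + \lfloor (n-1)/l \rfloor$. No further work is needed here.

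For the upper bound, I would bound each of the two entries of the maximum separately by $1 + \lceil (n-1)/l \rceil$. The first entry satisfies $\lceil n/l \rceil = 1 + \lfloor (n-1)/l \rfloor \le 1 + \lceil (n-1)/l \rceil$, again by the identity together with $\lfloor \cdot \rfloor \le \lceil \cdot \rceil$. For the second entry, the estimate $m \le n-1$ and monotonicity of the ceiling function give $1 + \lceil m/l \rceil \le 1 + \lceil (n-1)/l \rceil$. Since both entries are at most $1 + \lceil (n-1)/l \rceil$, so is their maximum, which is $\sr(la)$.

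The special case $l \mid n-1$ is then immediate: in that situation $\lfloor (n-1)/l \rfloor = \lceil (n-1)/l \rceil = (n-1)/l$, so the lower and upper bounds coincide and force $\sr(la) = 1 + (n-1)/l$. The point to emphasize is that there is no genuine obstacle remaining at this stage: all the substance lives in Theorem \ref{W1.11}, where the existence of the invariant $m_{a,M}$ and the estimate $m_{a,M} \le \sr(a)-1$ were extracted from the stability of the $(k,l)$-stable rank conditions. Once that formula is in hand, the present argument is purely a matter of combining it with the floor/ceiling identity and monotonicity. Were Corollary \ref{srla.via.maM} unavailable, the hard part would instead be re-establishing the monotonicity and shift behavior of the conditions $\sr_{k,l}[a]$ that underlie $m_{a,M}$.
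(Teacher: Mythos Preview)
Your proposal is correct and takes essentially the same approach as the paper: both arguments read the inequalities directly off Corollary~\ref{srla.via.maM} together with the bound $m_{a,M} \le n-1$ from Theorem~\ref{W1.11} and the identity $\lceil n/l \rceil = 1 + \lfloor (n-1)/l \rfloor$. The only cosmetic difference is that the paper verifies the upper bound via the ``smallest $p$ such that $pl \ge n$ and $(p-1)l \ge m_{a,M}$'' characterization from the corollary, whereas you bound each entry of the max formula separately; these are equivalent uses of the same result.
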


\begin{proof}
Write $p := \sr(la)$.  By Corollary \ref{srla.via.maM},
$$
p \ge \biggl\lceil \frac{n}{l} \biggr\rceil = 1 + \left\lfloor \frac{n-1}{l} \right\rfloor.
$$
Now if $p' := 1 + \left\lceil \frac{n-1}{l} \right\rceil$, then $p'l \ge l+n-1 \ge n$ and $(p'-1)l \ge n-1 \ge m_{a,M}$, where the last inequality comes from Theorem \ref{W1.11}.  Corollary \ref{srla.via.maM} says that $p \le p'$, which provides the stated upper bound.

The final statement of the theorem follows immediately.
\end{proof}

The gap in \eqref{sr.ka} can be closed to an equality in case $M$ has refinement (Theorem \ref{sr.ka.refine}), but not in general.  On one hand, Example \ref{exist.sra=?}(3) contains an element $a$ such that $\sr(ma) = 2$ for all $m \in \Zpos$.  In particular, $\sr(2a) = 2 = 1 + \left\lceil \frac{2-1}{2} \right\rceil$.

On the other hand, in Example \ref{arch.diff.sr}(2), there is an element $a$ such that $\sr(a) = 2$ while $\sr(2a) = 1 = 1+ \left\lfloor \frac{2-1}{2} \right\rfloor$.  Such an example cannot be conical, since in the conical case, $\sr(2a) = 1$ would imply that $2a$ is cancellative (Proposition \ref{hermite.etc}(f)), whence $a$ is cancellative and $\sr(a) = 1$.

Conical examples where only the left hand inequality of \eqref{sr.ka} is an equality do exist, as follows.

\begin{example}  \label{arch.sr2&4}
Let $M$ be presented with generators $a$ and $b$ and relations $4a = 2a+b = 2b$.  We claim that the distinct elements of $M$ are $0$, $b$ and $a+b$ together with $na$ for $n \in \Zpos$.  

Obviously any element of $M$ has one of the given forms.  Let $M_0$ be the commutative monoid presented with generators $a_0$, $b_0$ and relations $3a_0 = a_0+b_0$ and $4a_0 = 2b_0$.  This is Example \ref{exist.sra=?}(2) with $n=3$.  As shown there, the elements $0,a_0, b_0, 2a_0,3a_0, \dots$ are all distinct.  Since there is a monoid homomorphism $M \rightarrow M_0$ sending $a \mapsto a_0$ and $b \mapsto b_0$, we find that the elements $0,a,b,2a,3a,\dots$ in $M$ are all distinct.  Now set
$$
S := \{ (m,n) \in \Znn^2 \mid m \ge 4\ \text{or}\ (m\ge2,\; n\ge 1)\ \text{or}\ n \ge 2 \},
$$
a semigroup ideal of $\Znn^2$.  Let $\sim$ be the congruence on $\Znn^2$ defined by $x \sim y \iff x = y\ \text{or}\ x,y \in S$, and set $M_1 := \Znn^2/{\sim}$.  Since there is a monoid homomorphism $M \rightarrow M_1$ sending $a \mapsto [(1,0)]_{\sim}$ and $b \mapsto [(0,1)]_{\sim}$, we see that $a+b$ is not equal to any of $0,a,b,2a,3a,\dots$, thus verifying the claim.

In particular, it follows that $M$ is conical.  

We claim that $\sr(a) = 4$ and $\sr(2a) = 2 = 1 + \left\lfloor \frac{4-1}{2} \right\rfloor$.

Since $a$ is not cancellative, neither is $2a$, and so $\sr(2a) \ne 1$ due to conicality of $M$.

Suppose that $2(2a)+x = (2a)+y$ for some $x,y \in M$.  Then $y \ne 0,a$.  If $y=b$, then $x=0$, and if $y=a+b$, then $x=a$.  In both of these cases, we can solve
\begin{equation}  \label{4a=2a+e}
4a = 2a+e \qquad \text{and} \qquad e+x = y
\end{equation}
with $e := b$.  Finally, if $y = ma$ for some integer $m \ge 2$, then we can solve \eqref{4a=2a+e} with $e := 2a$.

It now follows from Lemma \ref{srma.sra} that $\sr(a) \le 4$.  (One can also show this directly with an argument similar to that in the previous paragraph.)  On the other hand, $\sr (a) >3$ because we have the equation $3a+a= a+(a+b)$ but there is no $e\in M$ such that $3a=a+e$ and $a+e = a+b$.  Thus $\sr(a) = 4$.
\medskip

This example is universal in the following sense:  if $M'$ is a monoid containing an element $a'$ such that $\sr(a') = 4$ and $\sr(2a') = 2$, there must exist an element $b' \in M'$ such that
$$
4a' = 2a'+b' = 2b' \qquad \text{and} \qquad 3a' \ne a'+b'\,.
$$
Hence, there is a monoid homomorphism $M \rightarrow M'$ sending $a \mapsto a'$ and $b \mapsto b'$.

First, $\sr(a')\nleq 3$, so there exist some $x',y'\in M'$ with $3a'+x'=a'+y'$, and yet there is no element $e \in M'$ simultaneously satisfying $3a'=a'+e$ and $e+x'=y'$. Adding $a'$ to both sides of the starting equation, we get $4a'+x'=2a'+y'$.
Since $\sr(2a')=2$, there is some $b' \in M'$ such that $4a'=2a'+b'$ and $b'+x'=y'$. Due to the non-existence of an element $e$ as above, we must also have $3a' \neq a'+b'$.
Now, notice that $4a'+2a' = 4a'+b' =2a'+2b'$. So, since $\sr(2a')=2$, there is some $f\in M'$ with $4a'=2a'+f$ and $f+2a'=2b'$. In particular, $4a'=2b'$.

As we will see shortly (Corollary \ref{trichot}), a monoid $M'$ with the above properties cannot be separative.  It is interesting to see the non-separativity occur directly, since we have $2a'+2a'=2a'+b'=b'+b'$, but $2a' \neq b'$.
\end{example}

\begin{lemma}  \label{refmon.maM=sra-1}
Suppose that $M$ is a refinement monoid and $a \in M$ with $\sr(a) = n < \infty$.  Then the integer $m := m_{a,M}$ of Theorem {\rm\ref{W1.11}} equals $n-1$.
\end{lemma}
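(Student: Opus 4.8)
The plan is to leverage the already-established half of the statement together with the refinement characterisation of stable rank. By Theorem~\ref{W1.11} we have $m \le n-1$ for free, so the entire task reduces to proving the reverse inequality $m \ge n-1$. The case $n=1$ is immediate, since $m$ is a nonnegative integer bounded above by $n-1 = 0$; thus I would assume $n \ge 2$ throughout.

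For the lower bound I would argue by contradiction: suppose $m \le n-2$. By the characterisation in Theorem~\ref{W1.11}, the condition $\sr_{k,l}[a]$ holds precisely when $k \ge n$ and $k-l \ge m$; taking $k = n$ and $l = 2$ (legitimate since $n \ge 2$) gives $k - l = n-2 \ge m$, so $\sr_{n,2}[a]$ holds. My goal is then to show that $\sr_{n,2}[a]$, in a refinement monoid, already forces $\sr(a) \le n-1$, contradicting $\sr(a) = n$.

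The mechanism for this is Proposition~\ref{srequiv.refmon}, applied with $n-1$ in place of $n$: to conclude $\sr(a) \le n-1$ it suffices to verify condition~(b), namely that $(n-1)a + x = a + y$ implies $x \le y$ for all $x,y \in M$. Given such an equation, I would simply add $a$ to both sides to obtain $na + x = 2a + y$, then apply $\sr_{n,2}[a]$ to produce an $e \in M$ with $na = 2a + e$ and $e + x = y$; the latter yields $x \le e+x = y$ at once. This establishes condition~(b) for the exponent $n-1$, and refinement (through Proposition~\ref{srequiv.refmon}) upgrades it to $\sr(a) \le n-1$, the desired contradiction. Hence $m \ge n-1$, and combined with Theorem~\ref{W1.11} we obtain $m = n-1$.

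The only genuinely delicate point is the bookkeeping of which pair $(k,l)$ to feed into Theorem~\ref{W1.11}: one must pick the pair that both satisfies $k \ge n$ and realises $k - l = n-2$, so that the hypothesis $m \le n-2$ activates exactly $\sr_{n,2}[a]$. Everything else is a one-line manipulation, and it is worth noting that refinement enters only through the equivalence (a)$\iff$(b) of Proposition~\ref{srequiv.refmon} --- the passage from the ``$x \le y$'' conclusion back to a genuine stable-rank bound --- rather than through any explicit refinement argument in the body of the proof.
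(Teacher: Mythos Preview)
Your proof is correct and follows essentially the same route as the paper's: both reduce to showing that $\sr_{n,2}[a]$ (which would follow from $m\le n-2$) forces $\sr(a)\le n-1$ via Proposition~\ref{srequiv.refmon}, by adding $a$ to $(n-1)a+x=a+y$ and extracting $x\le y$. The only cosmetic difference is that the paper phrases the contradiction in terms of the largest $l$ with $\sr_{n,l}[a]$ rather than directly in terms of $m$.
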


\begin{proof}
The proof of Theorem {\rm\ref{W1.11}} shows that $m = n-l$ where $l$ is the largest integer in $[1,n]$ such that $\sr_{n,l}[a]$ holds.  Since we are then done in case $n=1$, assume that $n \ge 2$.

We must show that $l=1$.  If $l \ge 2$, it follows as in the proof of Theorem {\rm\ref{W1.11}} that $\sr_{n,2}[a]$ holds.  Now consider $x,y \in M$ such that $(n-1)a+x = a+y$.  Then $na+x = 2a+y$ and $\sr_{n,2}[a]$ says that there is some $e \in M$ with $na = 2a+e$ and $e+x = y$, whence $x \le y$.  Proposition \ref{srequiv.refmon} thus implies that $\sr(a) \le n-1$, contradicting our hypotheses.

Therefore $l=1$, as required.
\end{proof}

\begin{theorem}  \label{sr.ka.refine}
Suppose that $M$ is a refinement monoid.
If $a \in M$ with $\sr(a) = n < \infty$ and $l \in \Zpos$, then
$$
\sr(la) = 1 + \left\lceil \frac{n-1}{l} \right\rceil.
$$
\end{theorem}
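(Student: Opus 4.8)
The plan is to read the value of $\sr(la)$ directly off Corollary \ref{srla.via.maM} once the refinement hypothesis has been used to pin down the invariant $m_{a,M}$. Corollary \ref{srla.via.maM} expresses $\sr(la) = \max\bigl(\lceil \sr(a)/l\rceil,\, 1 + \lceil m_{a,M}/l\rceil\bigr)$ for any $a$ of finite stable rank, so the whole problem reduces to computing $m_{a,M}$ under the standing assumption that $M$ has refinement.

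First I would invoke Lemma \ref{refmon.maM=sra-1}, which is precisely the point where refinement enters: it gives $m_{a,M} = n-1$. This is the real content behind the theorem; the lemma itself rests on the refinement characterization of stable rank in Proposition \ref{srequiv.refmon}, which permits replacing the existential ``$na + x = a+y \Rightarrow \exists\, e\,\dots$'' condition by the simpler order condition ``$\Rightarrow x \le y$''. Substituting $m_{a,M} = n-1$ and $\sr(a) = n$ into the formula of Corollary \ref{srla.via.maM} yields $\sr(la) = \max\bigl(\lceil n/l\rceil,\, 1 + \lceil (n-1)/l\rceil\bigr)$.

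It then remains only to simplify this maximum. Using the identity $\lceil n/l\rceil = 1 + \lfloor (n-1)/l\rfloor$ recorded just before Corollary \ref{srla.via.maM}, together with the trivial bound $\lfloor (n-1)/l\rfloor \le \lceil (n-1)/l\rceil$, the first argument of the max is at most the second, so the maximum equals $1 + \lceil (n-1)/l\rceil$, which is the claimed value.

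The only genuine obstacle is Lemma \ref{refmon.maM=sra-1}, that is, establishing $m_{a,M} = n-1$ rather than some smaller value; equivalently, that $\sr_{n,2}[a]$ fails whenever $\sr(a) = n \ge 2$. Once that is in hand, the present theorem is a purely arithmetical consequence of the two-sided estimate in Theorem \ref{W1.12} (which already supplies the upper bound) sharpened on the lower side by refinement. An alternative, self-contained route would bypass $m_{a,M}$ altogether: since Theorem \ref{W1.12} already gives $\sr(la) \le 1 + \lceil (n-1)/l\rceil$, it would suffice to prove the matching lower bound directly, exhibiting via Proposition \ref{srequiv.refmon} a witness $x,y$ with $(pl)a + x = la + y$ and $x \not\le y$ for $p = \lceil (n-1)/l\rceil$; but routing the argument through $m_{a,M}$ is cleaner and reuses machinery already in place.
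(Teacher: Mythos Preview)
Your proposal is correct and follows essentially the same approach as the paper: the paper cites Theorem~\ref{W1.12} for the upper bound and Lemma~\ref{refmon.maM=sra-1} together with Corollary~\ref{srla.via.maM} for the lower bound, which amounts to exactly the substitution $m_{a,M}=n-1$ and arithmetic simplification you carry out explicitly.
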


\begin{proof}
Set $p := \sr(la)$.  Then $p \le 1 + \left\lceil \frac{n-1}{l} \right\rceil$ by Theorem \ref{W1.12}.
The reverse inequality follows from Lemma \ref{refmon.maM=sra-1} and Corollary \ref{srla.via.maM}.
\end{proof}

\begin{proposition}  \label{sra.interval}
If $a \in M$ and $l,p \in \Zpos$ with $\sr(la) = p < \infty$, then
$$
lp-2l+2 \le \sr(a) \le lp.
$$
If $M$ has refinement, then $lp-2l+2 \le \sr(a) \le lp-l+1$.
\end{proposition}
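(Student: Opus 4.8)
The plan is to derive both the finiteness of $\sr(a)$ and all four numerical bounds from the already-proved formulas for $\sr(la)$, simply by inverting the inequalities of Theorem \ref{W1.12} (and the equality of Theorem \ref{sr.ka.refine} in the refinement case). No new monoid-theoretic construction should be needed; the content is purely the passage from estimates for $\sr(la)$ in terms of $\sr(a)$ to estimates for $\sr(a)$ in terms of $\sr(la)$.

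First I would observe that both the finiteness of $\sr(a)$ and the upper bound come essentially for free from Lemma \ref{srma.sra}. Taking $m = l$ there gives $\sr(a) \le l\cdot\sr(la) = lp < \infty$, so I may set $n := \sr(a) \in \Zpos$ and apply Theorem \ref{W1.12} to this $n$, yielding
$$
1 + \left\lfloor \frac{n-1}{l} \right\rfloor \le p \le 1 + \left\lceil \frac{n-1}{l} \right\rceil.
$$
The remaining task for the general case is then a routine translation of these floor/ceiling inequalities into integer bounds on $n$. From the left-hand inequality, $\lfloor (n-1)/l \rfloor \le p-1$ forces $(n-1)/l < p$, hence $n-1 < lp$ and $n \le lp$ (re-confirming the bound from Lemma \ref{srma.sra}). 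From the right-hand inequality, $\lceil (n-1)/l \rceil \ge p-1$ forces $(n-1)/l > p-2$, hence $n-1 > (p-2)l$ and so $n \ge (p-2)l + 2 = lp - 2l + 2$. Combining these gives $lp - 2l + 2 \le \sr(a) \le lp$.

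For the refinement case I would instead invoke Theorem \ref{sr.ka.refine}, which upgrades the two-sided estimate to the exact equality $\lceil (n-1)/l \rceil = p-1$. Unfolding the ceiling, this is equivalent to $(p-2)l < n-1 \le (p-1)l$; since $n-1$, $(p-2)l$, and $(p-1)l$ are all integers, this becomes $(p-2)l + 2 \le n \le (p-1)l + 1$, which is precisely $lp - 2l + 2 \le \sr(a) \le lp - l + 1$, the sharper upper bound.

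I do not expect a genuine obstacle here: the structural input (Theorems \ref{W1.12} and \ref{sr.ka.refine}, together with Lemma \ref{srma.sra}) already does the heavy lifting. The only point demanding care is keeping the strict versus non-strict inequalities straight when crossing between the real-valued floor/ceiling relations and the integer bounds — in particular using that, for an integer $k$, one has $\lfloor x \rfloor \le k \iff x < k+1$ and $\lceil x \rceil \ge k \iff x > k-1$. Once these equivalences are applied correctly, the proposition follows by direct bookkeeping.
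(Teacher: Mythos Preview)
Your proposal is correct and follows essentially the same approach as the paper: both invoke Lemma \ref{srma.sra} for finiteness and the upper bound, then invert the inequalities of Theorem \ref{W1.12} (and Theorem \ref{sr.ka.refine} in the refinement case). Your handling of the lower bound is actually slightly cleaner --- you use the equivalence $\lceil x \rceil \ge k \iff x > k-1$ directly, whereas the paper first obtains the weak bound $n \ge lp - 2l + 1$ via $\lceil x \rceil \le x + 1$ and then rules out equality by a separate observation.
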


\begin{proof}
We have $\sr(a) \le lp < \infty$ by Lemma \ref{srma.sra}.  Now set $n := \sr(a)$.  By Theorem \ref{W1.12},
$$
p \le 1 + \left\lceil \frac{n-1}{l} \right\rceil \le 1 + \frac{n-1}{l} + 1,
$$
whence $lp \le 2l+n-1$ and thus $n\ge lp-2l+1$.  Note that equality cannot hold here, since then $1 + \left\lceil \frac{n-1}{l} \right\rceil = p-1 < p$.  Therefore $n\ge lp-2l+2$.

If $M$ has refinement, then $p = 1 + \left\lceil \frac{n-1}{l} \right\rceil \ge 1 + \frac{n-1}{l}$ by Theorem \ref{sr.ka.refine}.  In this case, $lp \ge l+n-1$ and thus $n \le lp-l+1$.
\end{proof}

\sectionnew{Stable rank values in archimedean components}  \label{sr.value.arch}

We continue to fix a commutative monoid $M$.  We first investigate the set
$$
\sr_M(C) := \{ \sr_M(c) \mid c \in C \}
$$
of stable rank values within an archimedean component $C$ of $M$.  Namely, $\sr_M(C)$ must equal $\Zpos$, $\ZZ_{\ge2}$, $\{\infty\}$, or a finite subset of $\Zpos$.  When $M$ is conical, these possibilities are restricted a bit further.  Second, we carry over to monoids the trichotomy of \cite[Theorem 3.3]{AGOP} for stable ranks of finitely generated projective modules over separative exchange rings:  Assuming $M$ is separative, the only possible stable rank values for elements of $M$ are $1$, $2$, and $\infty$.  Additionally, when $M$ is separative, the function $\sr$ is constant on each archimedean component of $M$.

There is an obvious dichotomy within $M$: finite stable rank versus infinite stable rank.  This is respected by the archimedean components, as follows.

\begin{proposition}  \label{dichot.arch}
Let $C$ be an archimedean component of $M$.  Then either all elements of $C$ have finite stable rank, or all elements of $C$ have infinite stable rank.
\end{proposition}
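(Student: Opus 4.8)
The plan is to reduce the statement directly to Theorem \ref{srM(a)}(a), which already records that finiteness of stable rank propagates throughout an archimedean component. The dichotomy to be proved is the disjunction ``all elements of $C$ have finite stable rank'' versus ``all elements of $C$ have infinite stable rank,'' and I would establish it by assuming the second alternative fails and deducing the first.

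Concretely, suppose the second alternative fails, so that there is at least one element $a \in C$ with $\sr_M(a) < \infty$. Since $C$ is an $\asymp$-equivalence class and $a \in C$, we have $C = M(a)$ by the definition of archimedean component. Theorem \ref{srM(a)}(a) then asserts precisely that every element of $M(a)$ has finite stable rank. Hence the first alternative holds, and the dichotomy follows.

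For completeness I would recall the mechanism by which Theorem \ref{srM(a)}(a) applies, since the proposition is in essence a repackaging of that result. Given any $b \in C$, the asymptotic relation $a \asymp b$ supplies $l,m \in \Zpos$ with $b \le la$ and $a \le mb$, so that $a \le mb \le lma$. Lemma \ref{srb.squeeze} then yields $\sr(mb) \le \sr(a) < \infty$, and Lemma \ref{srma.sra} upgrades this to $\sr(b) \le m \cdot \sr(mb) < \infty$. In this sense there is no genuine obstacle to overcome: the analytic content has already been extracted in Lemmas \ref{srb.squeeze} and \ref{srma.sra} and assembled in Theorem \ref{srM(a)}. The only point requiring care is the bookkeeping identity $C = M(a)$ for every $a \in C$, which is immediate from the fact that archimedean components are exactly the equivalence classes of the relation $\asymp$.
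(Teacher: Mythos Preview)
Your proof is correct and follows exactly the same approach as the paper: the paper's proof is the single line ``Theorem \ref{srM(a)}(a),'' and you invoke precisely that result, additionally unpacking its mechanism via Lemmas \ref{srb.squeeze} and \ref{srma.sra}.
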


\begin{proof}
Theorem \ref{srM(a)}(a).
\end{proof}

When $M$ is conical, we may refine this dichotomy to a trichotomy by separating out the elements of stable rank $1$.  That is not possible without conicality, as shown by Example \ref{arch.diff.sr}(2), in which there is an archimedean component containing elements with stable ranks $1$ and $2$.

\begin{proposition}  \label{trichot.arch}
Assume that $M$ is conical, and let $C$ be an archimedean component of $M$.  Then exactly one of the following occurs: $\sr_M(C) = \{1\}$, or $\sr_M(C) \subseteq \ZZ_{\ge2}$, or $\sr_M(C) = \{\infty\}$.
\end{proposition}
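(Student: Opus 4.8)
The plan is to build on the dichotomy of Proposition \ref{dichot.arch} and then subdivide the finite-stable-rank case using conicality. First I would invoke Proposition \ref{dichot.arch}: either every element of $C$ has infinite stable rank, in which case $\sr_M(C) = \{\infty\}$ and we are in the third alternative, or every element of $C$ has finite stable rank. So the substance of the argument lies entirely in the second case, where I must show that $\sr_M(C)$ is either $\{1\}$ or a subset of $\ZZ_{\ge2}$.

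Assume then that all elements of $C$ have finite stable rank. The key is to show that if even one element $c \in C$ satisfies $\sr_M(c) = 1$, then every element of $C$ has stable rank $1$. By Proposition \ref{hermite.etc}(f), conicality of $M$ lets me translate $\sr_M(c) = 1$ into the statement that $c$ is cancellative. I would then record two elementary closure facts: (i) a positive multiple $nc$ of a cancellative element $c$ is again cancellative, obtained by cancelling $c$ repeatedly; and (ii) any element lying below a cancellative element is cancellative, since if $d + e = nc$ and $d + x = d + y$, then adding $e$ gives $nc + x = nc + y$, whence $x = y$.

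With these facts in hand, take an arbitrary $d \in C$. Since $c$ and $d$ lie in the same archimedean component, there is some $n \in \Zpos$ with $d \le nc$. By (i), $nc$ is cancellative, and then by (ii), $d$ is cancellative; applying Proposition \ref{hermite.etc}(f) again (or just Proposition \ref{hermite.etc}(a)) yields $\sr_M(d) = 1$. Hence $\sr_M(C) = \{1\}$. If instead no element of $C$ has stable rank $1$, then since all stable ranks are finite and positive, they are all $\ge 2$, so $\sr_M(C) \subseteq \ZZ_{\ge2}$.

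Finally I would check that the three alternatives are genuinely mutually exclusive: they are, since $\{1\}$, any nonempty subset of $\ZZ_{\ge2}$, and $\{\infty\}$ are pairwise incompatible (an archimedean component is nonempty, so $\sr_M(C) \ne \emptyset$). The main obstacle, such as it is, lies in isolating the cancellativity-propagation step (ii) together with the multiple-of-a-cancellative-element fact (i); once these are stated, the argument is a direct combination of the conical characterization of stable rank $1$ in Proposition \ref{hermite.etc}(f) with the already-established dichotomy of Proposition \ref{dichot.arch}. Everything else is bookkeeping, and no appeal to refinement is needed.
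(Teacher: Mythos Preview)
Your proposal is correct and follows essentially the same approach as the paper: invoke the dichotomy of Proposition~\ref{dichot.arch}, then in the finite case use conicality (via Proposition~\ref{hermite.etc}(f)) to show that stable rank $1$ propagates through an archimedean component via cancellativity. The paper compresses your facts (i) and (ii) into the single observation that if $b+c = na$ with $a$ cancellative, then $c$ is cancellative, but the content is identical.
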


\begin{proof} In view of Proposition \ref{dichot.arch}, it suffices to prove that if $C$ contains an element $a$ with $\sr(a) = 1$, then $\sr(c) = 1$ for all $c \in C$.  Given $c \in C$, we have $b+c = na$ for some $b \in M$ and $n \in \Zpos$.  Due to conicality, $\sr(a) = 1$ implies that $a$ is cancellative, so it follows from $b+c = na$ that $c$ is cancellative, and thus $\sr(c) = 1$.
\end{proof}

We will improve Propositions \ref{dichot.arch} and \ref{trichot.arch} with the help of the following result.

\begin{proposition}  \label{lowvalue.srla}
Let $a \in M$ with $\sr(a) = n < \infty$.  If $k \in \ZZ_{\ge2}$ and 
$$
n \ge \max\bigl(2, k(k-1)(k-2)\bigr),
$$
 then there is some $l \in \Zpos$ such that $\sr(la) = k$.  
\end{proposition}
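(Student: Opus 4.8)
The engine of the proof is Corollary~\ref{srla.via.maM}, which together with Theorem~\ref{W1.11} expresses the entire sequence of stable ranks of multiples through just two parameters, $n=\sr(a)$ and $m := m_{a,M}$ (where $0\le m\le n-1$). Writing $f(l) := \sr(la)$, one has
\[
f(l) = \max\left( \left\lceil \frac{n}{l} \right\rceil,\ 1 + \left\lceil \frac{m}{l} \right\rceil \right) \qquad (l \in \Zpos).
\]
The plan is a discrete intermediate-value argument on $f$. By Lemma~\ref{sr.decrease}, $f$ is weakly decreasing; moreover $f(1) = \max(n,1+m) = n \ge k$, while for $l \ge n$ we have $\lceil n/l\rceil = 1$ and $\lceil m/l\rceil \le 1$, so $f(l)\le 2$. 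Thus $f$ descends from a value $\ge k$ to a value $\le 2$. The case $k=2$ is then immediate (take $l=n-1$, where Theorem~\ref{W1.12} forces $\sr((n-1)a)=2$), so I would assume $k\ge 3$, whence $n\ge k(k-1)(k-2)\ge 6$ and the terminal value $2$ is $\le k-1$. For a weakly decreasing $\ZZ$-valued sequence straddling $k$ in this way, it suffices to show that $f$ never drops from a value $\ge k+1$ to a value $\le k-1$ across consecutive arguments: letting $l_0$ be the largest index with $f(l_0)\ge k$, the no-jump property together with $f(l_0+1)\le k-1$ gives $f(l_0)\le k$, hence $f(l_0)=k$.

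So the core step is to exclude an $l$ with $f(l)\ge k+1$ and $f(l+1)\le k-1$. Assume such an $l$ exists. The inequality $f(l+1)\le k-1$ bounds both ceilings at $l+1$: namely $\lceil n/(l+1)\rceil \le k-1$, so $n\le (k-1)(l+1)$, and $\lceil m/(l+1)\rceil \le k-2$, so $m\le (k-2)(l+1)$. The inequality $f(l)\ge k+1$ says at least one ceiling at $l$ is large, and I would split accordingly. If $\lceil n/l\rceil \ge k+1$, then $n > kl$; combined with $n\le (k-1)(l+1)$ this forces $l\le k-2$ and hence $n\le (k-1)^2$. If instead $1+\lceil m/l\rceil \ge k+1$, then $m > (k-1)l$; combined with $m\le (k-2)(l+1)$ this forces $l\le k-3$, and then $n\le (k-1)(l+1)\le (k-1)(k-2)$. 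Since both $(k-1)^2$ and $(k-1)(k-2)$ are strictly smaller than $k(k-1)(k-2)$ for $k\ge 3$, either conclusion contradicts the hypothesis $n\ge k(k-1)(k-2)$. Hence no such $l$ exists, and $f$ attains the value $k$.

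The one genuinely load-bearing ingredient is the exact formula from Corollary~\ref{srla.via.maM}. The raw sandwich of Theorem~\ref{W1.12} only localizes $f(l)$ to the width-one set $\{1+\lfloor (n-1)/l\rfloor,\ 1+\lceil (n-1)/l\rceil\}$ and cannot by itself exclude a jump of size $2$ over $k$; indeed such a jump is compatible with the sandwich precisely when $(k-1)\nmid(n-1)$. Pinning $f$ down via the single extra parameter $m$ is exactly what turns the problem into the elementary integer arithmetic above. Consequently I expect the main obstacle to be essentially expository: invoking the formula correctly and keeping the two-case bookkeeping clean, rather than any deep estimate. It is worth remarking that the argument in fact only uses $n\ge k(k-1)$; the stated cubic bound is a comfortable and uniformly stated sufficient hypothesis.
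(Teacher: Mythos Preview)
Your proof is correct, but the strategy differs from the paper's. Both arguments rest on the same formula from Corollary~\ref{srla.via.maM}, writing $f(l)=\sr(la)=\max(f_1(l),f_2(l))$ with $f_1(l)=\lceil n/l\rceil$ and $f_2(l)=1+\lceil m/l\rceil$. From there the paper proceeds \emph{constructively}: it shows (when $n\ge k(k-1)$, resp.\ $m\ge (k-1)(k-2)$) that there is an integer $l$ in the real interval $[n/k,\,n/(k-1))$, resp.\ $[m/(k-1),\,m/(k-2))$, where $f_1(l)=k$, resp.\ $f_2(l)=k$; it then splits into cases according to whether $m<(k-1)(k-2)$ (where the cubic bound yields $km<n$, forcing $f_2\le 2$ at the chosen $l$) or $m\ge(k-1)(k-2)$ (where monotonicity of $f_1,f_2$ lets one pick the larger of the two witnesses). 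Your approach is instead a discrete intermediate-value argument: $f$ is weakly decreasing, starts at $n\ge k$, and ends $\le 2$, so it suffices to rule out a drop from $\ge k+1$ to $\le k-1$ between consecutive arguments; the ceiling bounds at $l$ and $l+1$ then force $n\le(k-1)^2$ or $n\le(k-1)(k-2)$, contradicting the hypothesis.

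What each buys: the paper's argument is explicit---it hands you an $l$---but the case $m<(k-1)(k-2)$ genuinely consumes the cubic bound to get $km<n$. Your no-jump argument is non-constructive but cleaner, and, as you correctly note, actually only needs $n\ge k(k-1)$ for $k\ge 3$ (and $n\ge 2$ for $k=2$), so it yields a sharper hypothesis than the stated proposition. Your separate handling of $k=2$ via Theorem~\ref{W1.12} at $l=n-1$ is valid.
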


\begin{proof}
Let $m := m_{a,M}$ be as in Theorem \ref{W1.11}, and define $f_1,f_2 : \Zpos \rightarrow \Zpos$ by the rules
$$
f_1(l) := \biggl\lceil\frac{n}{l}\biggr\rceil \qquad \text{and} \qquad f_2(l) := 1 + \biggl\lceil\frac{m}{l}\biggr\rceil.
$$
Corollary \ref{srla.via.maM} says that $\sr(la) = \max(f_1(l),f_2(l))$ for all $l \in \Zpos$.  Note that $f_1$ and $f_2$ are non-increasing: if $l \ge l'$ in $\Zpos$, then $f_i(l) \le f_i(l')$.

\textbf{Observation 1}: If $l \in \Zpos$, then $f_1(l) = k$ if and only if $\frac{n}{k} \le l < \frac{n}{k-1}$.  Moreover, $f_2(l) = k$ if and only if $m>0$ and $\frac{m}{k-1} \le l < \frac{m}{k-2}$ (where we allow $\frac{m}0 = \infty$).

The first equivalence holds because $f_1(l) = k$ if and only if $k-1 < \frac{n}{l} \le k$.  The second is similar, taking into account that $f_2(l) = 1$ when $m=0$.

\textbf{Observation 2}: There exists $l' \in \Zpos$ such that $f_1(l') = k$.  In case $m>0$ and $m \ge (k-1)(k-2)$, there exists $l'' \in \Zpos$ such that $f_2(l'') = k$.

Since $n,k \ge 2$, we have $n \ge k(k-1)$.  Consequently, $\frac{n}{k-1} - \frac{n}{k} \ge 1$, and hence there is an integer $l'$ in the real interval $\left[ \frac{n}{k}, \frac{n}{k-1} \right)$.  By Observation 1, $f_1(l') = k$.  The second statement is proved in the same way.

We now split the proof into cases.  Assume first that $m=0$ or $m < (k-1)(k-2)$, and note that $k m < n$ in these cases.  Observation 2 provides a positive integer $l$ such that $f_1(l) = k$.  By Observation 1, $l \ge \frac{n}{k} > m$, from which it follows that $f_2(l) \le 2 \le k$.  Thus $\sr(la) = k$ in this case.

Finally, assume that $m>0$ and $m \ge (k-1)(k-2)$.  In this case, Observation 2 yields positive integers $l_1$ and $l_2$ such that $f_i(l_i) = k$ for $i=1,2$.  If $l_1 \ge l_2$, then $f_2(l_1) \le f_2(l_2) = k = f_1(l_1)$, whence $\sr(l_1a) = k$.  Likewise, if $l_1 \le l_2$, then $f_1(l_2) \le f_1(l_1) = k = f_2(l_2)$, and so $\sr(l_2a) = k$.
\end{proof}

Of course, under the hypotheses of Proposition \ref{lowvalue.srla}, there must be positive integers $l_2,l_3,\dots,l_k$ such that $\sr(l_j a) = j$ for $j = 2,3,\dots,k$.

\begin{theorem}  \label{srSinfinite}
Let $S$ be a subset of $M$ such that $\Zpos\cdot S \subseteq S$.  If the set $\sr_M(S)$ is infinite, then $\sr_M(S) \supseteq \ZZ_{\ge2}$.
\end{theorem}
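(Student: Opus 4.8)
The plan is to reduce everything to Proposition \ref{lowvalue.srla} by producing, for each prescribed target value $k \in \ZZ_{\ge2}$, an element of $S$ whose stable rank is finite but large enough to trigger that proposition. So I would fix an arbitrary $k \in \ZZ_{\ge2}$ and aim to show $k \in \sr_M(S)$; since $k$ is arbitrary, this yields $\sr_M(S) \supseteq \ZZ_{\ge2}$.

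First I would observe that $\sr_M(S) \subseteq \Zpos \cup \{\infty\}$, so deleting the single possible value $\infty$ leaves $\sr_M(S) \cap \Zpos$ still infinite. An infinite subset of $\Zpos$ is unbounded, so there exists an element $a \in S$ with $\sr_M(a) = n$ finite and $n \ge \max\bigl(2, k(k-1)(k-2)\bigr)$. This is the one place where the \emph{unboundedness} of the finite part of $\sr_M(S)$ — not merely its infinitude as an abstract set — is genuinely used.

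Next I would apply Proposition \ref{lowvalue.srla} to this $a$ together with the chosen $k$: the inequality $n \ge \max\bigl(2, k(k-1)(k-2)\bigr)$ is precisely the hypothesis that proposition requires, so it produces some $l \in \Zpos$ with $\sr_M(la) = k$. The closure hypothesis $\Zpos \cdot S \subseteq S$ then guarantees $la \in S$, whence $k = \sr_M(la) \in \sr_M(S)$, as desired.

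Since the substantive work is carried entirely by Proposition \ref{lowvalue.srla}, there is no real analytic obstacle here; the only point demanding care is the order of quantifiers. One must select the large-stable-rank element of $S$ \emph{after} fixing $k$, because the required lower bound $k(k-1)(k-2)$ grows with $k$, and a single element of $S$ of large finite stable rank cannot serve simultaneously for all $k$. The unboundedness established in the second step is exactly what lets this selection succeed for every $k \in \ZZ_{\ge2}$.
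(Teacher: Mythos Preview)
Your proposal is correct and follows essentially the same approach as the paper: fix $k \in \ZZ_{\ge2}$, use the infinitude of $\sr_M(S)$ to find $a \in S$ with finite stable rank exceeding the threshold required by Proposition~\ref{lowvalue.srla}, apply that proposition to obtain $\sr_M(la) = k$, and invoke $\Zpos \cdot S \subseteq S$ to conclude $la \in S$. The only cosmetic difference is that the paper uses the cruder bound $n \ge k^3$ in place of $\max\bigl(2, k(k-1)(k-2)\bigr)$.
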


\begin{proof}
Since $\sr_M(S)$ is infinite, it must contain infinitely many positive integers.
Given $k \in \ZZ_{\ge2}$, choose an element $a \in S$ with $\sr(a) = n < \infty$ and $n \ge k^3$.  By Proposition \ref{lowvalue.srla}, there exists $l \in \Zpos$ such that $\sr(la) = k$, and $la \in S$ by hypothesis.
\end{proof}

Propositions \ref{dichot.arch} and \ref{trichot.arch} can now be upgraded via Theorem \ref{srSinfinite}:

\begin{theorem}  \label{srC}
Let $C$ be an archimedean component of $M$.

{\rm(a)} The set $\sr_M(C)$ equals one of $\Zpos$, $\ZZ_{\ge2}$, $\{\infty\}$, or a finite subset of $\Zpos$.

{\rm(b)} If $M$ is conical, then $\sr_M(C)$ equals one of $\{1\}$, $\ZZ_{\ge2}$, $\{\infty\}$, or a finite subset of $\ZZ_{\ge2}$.
\qed\end{theorem}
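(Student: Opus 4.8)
The plan is to combine the dichotomy and trichotomy results (Propositions \ref{dichot.arch} and \ref{trichot.arch}) with the infinitude dichotomy supplied by Theorem \ref{srSinfinite}. The key observation is that an archimedean component $C$ satisfies $\Zpos \cdot C \subseteq C$: if $c \in C$, then $lc \asymp c$ for every $l \in \Zpos$, since $c \le lc$ and $lc \le lc$ give $c \asymp lc$ trivially (indeed $c \le lc$ and $lc \le lc$, so $\langle c\rangle = \langle lc\rangle$). Hence $C$ is an eligible set $S$ for Theorem \ref{srSinfinite}, and the whole argument reduces to bookkeeping on which integer sets can arise.

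For part (a), I would first invoke Proposition \ref{dichot.arch} to split into two cases. If all elements of $C$ have infinite stable rank, then $\sr_M(C) = \{\infty\}$. Otherwise, Theorem \ref{srM(a)}(a) guarantees that \emph{every} element of $C$ has finite stable rank, so $\sr_M(C) \subseteq \Zpos$. Now apply Theorem \ref{srSinfinite} with $S := C$: either $\sr_M(C)$ is finite (a finite subset of $\Zpos$), or it is infinite and therefore contains $\ZZ_{\ge2}$. In the latter case the only remaining freedom is whether $1 \in \sr_M(C)$, giving either $\ZZ_{\ge2}$ or $\Zpos$. This exhausts the four listed possibilities.

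For part (b), assume $M$ is conical and again split via Proposition \ref{trichot.arch}, which gives the cleaner trichotomy: $\sr_M(C) = \{1\}$, or $\sr_M(C) \subseteq \ZZ_{\ge2}$, or $\sr_M(C) = \{\infty\}$. The first and third cases are already on the list. In the middle case $\sr_M(C) \subseteq \ZZ_{\ge2}$, all elements have finite stable rank, so once more Theorem \ref{srSinfinite} applies: if $\sr_M(C)$ is infinite it must contain $\ZZ_{\ge2}$, hence equals $\ZZ_{\ge2}$; if finite, it is a finite subset of $\ZZ_{\ge2}$. This yields exactly the four stated alternatives.

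I do not expect a genuine obstacle here, since all the heavy lifting is done by the cited results; the one point requiring a moment's care is verifying that $C$ is closed under positive integer multiples so that Theorem \ref{srSinfinite} is applicable. That closure is immediate from the definition of asymptotic equivalence, but it is the hinge of the whole proof and should be stated explicitly. The only other subtlety is keeping straight, in the conical case, that the element of stable rank $1$ cannot coexist with elements of stable rank $\ge 2$ in the same component—but this is precisely the content of Proposition \ref{trichot.arch}, so no additional work is needed.
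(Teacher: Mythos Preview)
Your proposal is correct and matches the paper's approach exactly: the paper states the theorem with a \qed and no further proof, treating it as an immediate upgrade of Propositions \ref{dichot.arch} and \ref{trichot.arch} via Theorem \ref{srSinfinite}. Your observation that an archimedean component is closed under positive integer multiples (so that Theorem \ref{srSinfinite} applies with $S=C$) is the only verification needed, and your case analysis is precisely the intended one.
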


\begin{examples}  \label{examples.srC}
For any monoid $M$, the group $U(M)$ is an archimedean component of $M$, and $\sr_M(U(M)) = \{1\}$.  We thus restrict attention to archimedean components not containing any units.

(1) Let $M := \Znn \sqcup \{\infty\}$ as in Example \ref{exist.sra=?}(1).  Then $\sr_M(\Zpos) = \{1\}$ and $\sr_M(\{\infty\}) = \{\infty\}$.

(2) Let $M$ be as in Example \ref{exist.sra=?}(3).  Then $\sr_M(\Zpos b) = \{1\}$ (since $b$ is cancellative) and $\sr_M(\Zpos a) = \{2\}$.

(3) Let $M$ be as in Example \ref{exist.sra=?}(2), with $n \ge3$.  As shown in the example, $\sr(a) = n$, whence $n \ge \sr(ma) \ge 2$ for all $m \in \Zpos$, since $ma$ is not cancellative.  As also shown, $\sr(b) = 2$.  Therefore $\sr_M(M \setminus \{0\}) \subseteq [2,n]$.

The set $\sr_M(M \setminus \{0\})$ need not equal the full integer interval $[2,n]$, however.  For instance, if $n=5$, then $\sr_M(M \setminus \{0\}) = \{2,3,5\}$.  Namely, from $\sr(a) = 5$, we get $\sr(2a) = 3$ and $\sr(4a) = 2$ by Theorem \ref{W1.12}, and then $\sr(ma) \le 3$ for all $m>2$ by Lemma \ref{sr.decrease}.   

Similarly, if $n=7$, then $\sr_M(M \setminus \{0\}) = \{2,3,4,7\}$.

(4) Let $M$ be as in Example \ref{arch.diff.sr}(2), with $n \ge 3$.  As shown there, $\sr(a) = n$ and $\sr(na) = 1$.  If $z = ma+b$ with $0<m<n$, then $\sr(z) = \sr(ma) \le n$ by Corollary \ref{sr+unit} and Theorem \ref{W1.9}.  As in Example (3), we conclude that $\sr_M(M \setminus U(M)) \subseteq [1,n]$.  Similarly, if $n=5$, then $\sr_M(M \setminus U(M)) = \{1,2,3,5\}$, while if $n=7$, then $\sr_M(M \setminus U(M)) = \{1,2,3,4,7\}$.

(5) In Theorem \ref{simple.crm.sr[2,infty)}, we will establish the existence of simple conical refinement monoids $N$ with $\sr_N(N\setminus\{0\}) = \ZZ_{\ge2}$.
\end{examples}

We now turn to the influence of separativity on stable ranks.
The proof of \cite[Theorem 3.3]{AGOP} converts directly to the monoid setting and yields the following two results.

\begin{proposition}  \label{sep.srfinite.le2}
Assume that $M$ is separative.  Then an element $a \in M$ has finite stable rank if and only if $\sr(a) \le 2$, if and only if $a$ is Hermite.
\end{proposition}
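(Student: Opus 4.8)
The plan is to establish the cycle of implications $a \text{ Hermite} \Rightarrow \sr(a) \le 2 \Rightarrow \sr(a) < \infty \Rightarrow a \text{ Hermite}$. The first of these is precisely Proposition \ref{hermite.etc}(c), and the second is trivial since $2 < \infty$. Thus the entire statement reduces to the single nontrivial claim that, with $M$ separative, an element $a$ satisfying $\sr(a) = n < \infty$ is necessarily Hermite. This is the step into which separativity must be fed.

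To prove that claim I would exploit Proposition \ref{sr=n->na.hermite}: although $a$ itself may fail to be Hermite, every multiple $ka$ with $k \ge n$ is Hermite. The strategy is to climb up to these good multiples, cancel there, and then descend by separative cancellation. Concretely, suppose $2a + x = a + y$, with target $a + x = y$. Multiplying this equation by an integer $k \ge n$ gives $2(ka) + kx = ka + ky$, and since $ka$ is Hermite one may peel off a copy of $ka$ to obtain $ka + kx = ky$, that is, $k(a+x) = ky$.

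Taking $k = n$ and $k = n+1$ then produces the paired equations $n(a+x) = ny$ and $(n+1)(a+x) = (n+1)y$. At this point separativity closes the argument: Lemma \ref{agop2.1}(c), applied to the elements $a+x$ and $y$ with exponent $n$, yields $a+x = y$, so $a$ is Hermite and the cycle is complete.

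The main obstacle is conceptual rather than computational. A naive attempt to cancel $a$ directly in the rewritten hypothesis $a + (a+x) = a + y$ fails, because $a$ need not be cancellative (indeed $\sr(a)$ could be large). The resolution is to recognize that genuine cancellation is available only at the level of the Hermite multiples $ka$ with $k \ge n$, and that multiplying the original equation by two \emph{consecutive} integers $n$ and $n+1$ packages the result into exactly the pair of hypotheses demanded by the separativity criterion in the form of Lemma \ref{agop2.1}(c). Setting up this consecutive-exponent reduction is the key step; once it is in place the conclusion is immediate.
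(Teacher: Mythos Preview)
Your proof is correct and takes a genuinely different route from the paper's argument.

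The paper also reduces to showing that $2a+x=a+y$ implies $a+x=y$, but it does not invoke Proposition~\ref{sr=n->na.hermite}. Instead, it uses the ideal-theoretic form of separativity, Lemma~\ref{agop2.1}(d): rewriting the hypothesis as $(a+x)+a=y+a$, it suffices to show $a\in\langle a+x\rangle\cap\langle y\rangle$, and only $a\in\langle y\rangle$ is nontrivial. To obtain this, the paper adds $(n-1)y$ to both sides of $2a+x=a+y$ and repeatedly substitutes $2a+x$ for $a+y$ on the left to reach $na+(a+nx)=a+ny$; the $n$-stable rank condition then produces $e$ with $e+(a+nx)=ny$, so $a\le ny$.

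Your approach is arguably cleaner: it packages the hard work into the already-proved Proposition~\ref{sr=n->na.hermite} and then applies separativity in the consecutive-exponent form Lemma~\ref{agop2.1}(c), avoiding the substitution trick entirely. The paper's approach has the virtue of being self-contained within Section~\ref{sr.value.arch}, using only the definition of stable rank and not the results of Section~\ref{sr.mult}.
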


\begin{proof}
It suffices to show that $\sr(a) = n < \infty$ implies that $a$ is Hermite.  Given $x,y \in M$ such that $2a+x = a+y$, we want to show that $a+x = y$.  In view of Lemma \ref{agop2.1}(d), it suffices to show that $a \in \langle y \rangle$.  Add $(n-1)y$ to both sides of $2a+x = a+y$ to get $2a + x + (n-1)y = a + ny$.  By repeatedly replacing $a+y$ by $2a+x$ on the left hand side, we find that
$$
na + (a+nx) = a+ny.
$$
Since $\sr(a) = n$, there exists $e \in M$ such that $na = a+e$ and $e+(a+nx) = ny$.  The latter equation shows that $a \le ny$, and so $a \in \langle y \rangle$ as required.
\end{proof}

Separative monoids thus satisfy the following elementwise trichotomy:

\begin{corollary}  \label{trichot}
 If $M$ is separative, then every element of $M$ has stable rank $1$, $2$, or $\infty$.
\end{corollary}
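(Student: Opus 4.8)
The plan is to read the corollary directly off Proposition \ref{sep.srfinite.le2}, so that almost all of the work has already been done. The natural first move is to split the elements of $M$ according to the dichotomy noted before Proposition \ref{dichot.arch}: for any $a \in M$, either $\sr(a) < \infty$ or $\sr(a) = \infty$. The second case requires no argument, so the only content is to control the finite values.

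Next I would invoke Proposition \ref{sep.srfinite.le2}, which uses separativity to show that $\sr(a) < \infty$ forces $a$ to be Hermite and hence $\sr(a) \le 2$. Since a finite stable rank is by definition a positive integer, this leaves only $\sr(a) = 1$ or $\sr(a) = 2$. Combining with the infinite case yields that every element of $M$ has stable rank $1$, $2$, or $\infty$, which is exactly the asserted trichotomy.

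There is no real obstacle at the level of the corollary itself: the genuine difficulty sits inside Proposition \ref{sep.srfinite.le2}, where one must promote the hypothesis $\sr(a) = n < \infty$ to the Hermite property. That step relies on the separativity criterion of Lemma \ref{agop2.1}(d), reducing matters to showing $a \in \langle y \rangle$ whenever $2a + x = a + y$; this is arranged by iterating the substitution of $a+y$ by $2a+x$ to manufacture a relation $na + (a+nx) = a + ny$, to which the $n$-stable rank condition applies and yields $a \le ny$. Once that is in hand, the corollary is immediate.
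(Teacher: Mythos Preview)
Your proposal is correct and matches the paper's approach exactly: the corollary is stated without a separate proof in the paper, being an immediate consequence of Proposition~\ref{sep.srfinite.le2}. Your additional paragraph recapitulating the argument inside Proposition~\ref{sep.srfinite.le2} is accurate but unnecessary for the corollary itself.
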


The monoid $M := \Znn \sqcup \{\infty\}$ of Example \ref{exist.sra=?}(1) is easily seen to be separative, and $1$, $\infty$ both occur as stable ranks of elements of $M$.  An example of a separative monoid containing elements of stable rank $2$ follows.

\begin{example}  \label{sep.with.sr2}
Let $M$ be the monoid of Example \ref{exist.sra=?}(3), and observe that $M$ is conical.  By direct calculation, one can show that $M$ is separative, as well as having refinement (which we shall need later).  Alternatively, $M$ is a \emph{graph monoid} in the sense of \cite[item (M), p.163]{AMP}, corresponding to the directed graph
$$
\xymatrixcolsep{5ex}  \xymatrix{
a \ar@(dl,ul) \ar[r] &b
}$$
and so \cite[Theorem 6.3 and Proposition 4.4]{AMP} imply that $M$ is a separative refinement monoid.  

As shown in Example \ref{exist.sra=?}(3), the generator $a \in M$ has the property that $\sr(ma) = 2$ for all $m \in \Zpos$.
\end{example}

\begin{lemma}  \label{ref+hermite}
Assume that $M$ is a refinement monoid.  Then $a \in M$ is Hermite if and only if whenever $2a+x = a+y$ for some $x,y \in \langle a \rangle$, then $a+x = y$.
\end{lemma}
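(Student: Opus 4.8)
The forward implication is immediate: if $a$ is Hermite then the defining implication $2a+x=a+y \implies a+x=y$ holds for \emph{all} $x,y\in M$, in particular for $x,y\in\langle a\rangle$. So the whole content lies in the reverse implication, and the plan is to reduce an arbitrary instance of the Hermite condition to one in which the ``error terms'' lie in $\langle a\rangle$, so that the hypothesis applies. The mechanism for this reduction is refinement: I will split off the part of $x$ and of $y$ that sticks out of $\langle a\rangle$ and show it cancels automatically.

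Concretely, assume $2a+x=a+y$ for arbitrary $x,y\in M$ and rewrite this as $a+a+x=a+y$. Applying the refinement property to this equation (three summands on the left, two on the right, using the extension of refinement to arbitrary equal sums recalled in the Terminology section), I obtain elements $z_{ij}$ with $1\le i\le 3$ and $1\le j\le 2$ whose row sums are $a=z_{11}+z_{12}$, $a=z_{21}+z_{22}$, $x=z_{31}+z_{32}$ and whose column sums are $a=z_{11}+z_{21}+z_{31}$ and $y=z_{12}+z_{22}+z_{32}$.

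The key observation is that every entry except $z_{32}$ lies in $\langle a\rangle$: the entries $z_{11},z_{12},z_{21},z_{22}$ each occur in a row summing to $a$, and $z_{31}$ occurs in the first column summing to $a$, so all of these are $\le a$. Thus, setting $x':=z_{31}$ and $y':=z_{12}+z_{22}$, I have $x'\in\langle a\rangle$ and $y'\le 2a$, so $y'\in\langle a\rangle$ as well. A direct comparison gives $2a+x'=a+y'$, since both sides equal $z_{11}+z_{12}+z_{21}+z_{22}+z_{31}$. Because $x',y'\in\langle a\rangle$, the hypothesis yields $a+x'=y'$. Finally, writing $p:=z_{32}$, one has $x=x'+p$ and $y=y'+p$, whence $a+x=(a+x')+p=y'+p=y$, which shows that $a$ is Hermite.

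The step requiring the most care is the choice of refinement: the whole argument hinges on arranging the decomposition so that the single ``bad'' piece $z_{32}$ — the part of $x$ and of $y$ that need not lie in $\langle a\rangle$ — appears as a \emph{common} summand on both sides of the reduced equation, so that it drops out without any appeal to cancellation (which is unavailable). Once the refinement is set up this way, the remaining work is the routine bookkeeping verifying that $x'$ and $y'$ lie in $\langle a\rangle$ and that $2a+x'=a+y'$ has exactly the form the hypothesis requires.
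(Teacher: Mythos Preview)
Your proof is correct and follows essentially the same route as the paper's. The only cosmetic difference is that the paper applies a $2\times 2$ refinement to $(2a)+x=a+y$ (treating $2a$ as a single summand) to get $2a=a_1+a_2$, $x=x_1+x_2$ with $a_1+x_1=a$ and $a_2+x_2=y$; your $x'$, $y'$, $p$ are exactly their $x_1$, $a_2$, $x_2$, and the reduced equation $2a+x'=a+y'$ and the recombination step are identical.
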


\begin{proof}
It suffices to establish the reverse implication.  Suppose $2a+x = a+y$ for some arbitrary $x,y \in M$.  Use refinement to write $2a = a_1+a_2$ and $x = x_1+x_2$ with $a_1+x_1 = a$ and $a_2+x_2 = y$.  Then we have
$$
2a+x_1 = a_1+a_2+x_1 = a+a_2\,,
$$
with $x_1 \le a$ and $a_2 \le 2a$, so that $x_1,a_2 \in \langle a \rangle$.  Therefore, we get $a+x_1 = a_2$, and so $a+x = a+x_1+x_2 = a_2+x_2 = y$.
\end{proof}

We can now state:

\begin{theorem}  \label{sep.trichot.arch}
Given $a \in M$, consider the following conditions:
\begin{enumerate}
\item $\sr(a) < \infty$.
\item $\sr(a) \le 2$.
\item $a$ is Hermite.
\item All elements of $M(a)$ are Hermite.
\item All elements of $M(a)$ are self-cancellative.
\end{enumerate}

If $M$ is separative, then \textup{(1)--(4)} are equivalent.
Moreover, all the elements in any archimedean component of $M$ have the same stable rank.

If $M$ has refinement, then $(4)$ and $(5)$ are equivalent.
\end{theorem}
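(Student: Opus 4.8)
The plan is to treat the two implications separately, with only $(5)\Rightarrow(4)$ requiring the refinement hypothesis. The implication $(4)\Rightarrow(5)$ is immediate and needs no refinement: by Proposition \ref{hermite.etc}(c) every Hermite element is self-cancellative, and this is an elementwise statement, so if all elements of $M(a)$ are Hermite then all of them are self-cancellative.

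For $(5)\Rightarrow(4)$, assume every element of $M(a)$ is self-cancellative and fix $b \in M(a)$; the goal is to show $b$ is Hermite. Here I would invoke the refinement hypothesis through Lemma \ref{ref+hermite} (applied to $b$ in place of $a$), which reduces the task to checking the Hermite condition only for $x,y \in \langle b \rangle$. So suppose $2b+x = b+y$ with $x,y \in \langle b \rangle$, and aim to derive $b+x=y$. The crucial point is that the element $b+x$ stays inside $M(a)$: from $x \le nb$ for some $n$ we get $b \le b+x \le (n+1)b$, so $b+x \asymp b$ and hence $b+x \in M(b) = M(a)$. Consequently $b+x$ is self-cancellative by hypothesis.

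The computation is then a single step. Adding $x$ to both sides of $2b+x = b+y$ gives
$$
2(b+x) = 2b + 2x = (b+x) + y,
$$
and self-cancellativity of $b+x$ yields $b+x = y$, which is exactly the Hermite conclusion. This completes $(5)\Rightarrow(4)$.

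The only genuine obstacle is guaranteeing that the element $b+x$, to which self-cancellativity is applied, actually lies in $M(a)$, since self-cancellativity is assumed only there. This is precisely what the reduction to $x \in \langle b \rangle$ secures: for an arbitrary $x$ the sum $b+x$ could lie in a strictly larger archimedean component, and the argument would collapse. Thus refinement enters the proof exactly through Lemma \ref{ref+hermite}, which lets me confine attention to $x$ in the o-ideal $\langle b \rangle = \langle a \rangle$ and thereby keeps $b+x$ asymptotic to $b$.
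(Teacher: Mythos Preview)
Your treatment of the refinement part, $(4)\Leftrightarrow(5)$, is correct and matches the paper's proof essentially step for step: both invoke Proposition~\ref{hermite.etc}(c) for $(4)\Rightarrow(5)$, and for $(5)\Rightarrow(4)$ both use Lemma~\ref{ref+hermite} to restrict to $x,y\in\langle b\rangle$, observe that $b+x\in M(a)$, rewrite $2b+x=b+y$ as $2(b+x)=(b+x)+y$, and apply self-cancellativity. (The paper reduces to the single element $a$ by noting $M(b)=M(a)$; you treat a general $b\in M(a)$ directly, which amounts to the same thing.)

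However, your proposal addresses only the final sentence of the theorem and says nothing about the separative case. The statement asserts, under separativity, that $(1)$--$(4)$ are equivalent and that stable rank is constant on each archimedean component; neither claim is touched. The nontrivial implication $(1)\Rightarrow(4)$ requires Theorem~\ref{srM(a)}(a) (finite stable rank propagates through $M(a)$) together with Proposition~\ref{sep.srfinite.le2} (in a separative monoid, finite stable rank forces Hermite). The constancy of stable rank on archimedean components is a separate argument: after the trichotomy of Corollary~\ref{trichot} reduces matters to showing $\sr(a)=1\Rightarrow\sr(b)=1$ for $b\in M(a)$, the paper squeezes $a\le nb\le mna$, uses Lemma~\ref{srb.squeeze} to get $\sr(nb)=1$, and then invokes the separativity criterion Lemma~\ref{agop2.1}(d) to descend from $nb=nb+e$ to $b=b+e$. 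As written, your proposal is a correct proof of the last clause only, not of the full theorem.
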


\begin{proof}
The implications (4)$\Longrightarrow$(3)$\Longrightarrow$(2)$\Longrightarrow$(1) and (4)$\Longrightarrow$(5) hold without any assumptions on $M$, and follow from Proposition \ref{hermite.etc}.

Asuming $M$ is separative, we show (1)$\Longrightarrow$(4).  By Theorem \ref{srM(a)}(a), condition (1) implies that all elements of $M(a)$ have finite stable rank, and so by Proposition \ref{sep.srfinite.le2} they are Hermite.

We next address the statement about archimedean components.  Given the equivalence above and the trichotomy of Corollary \ref{trichot}, all that remains is to show that if $a \in M$ with $\sr(a) = 1$ and $b \in M(a)$, then $\sr(b) = 1$.  Then $b \le ma$ and $a \le nb$ for some positive integers $m$ and $n$.  Since $a \le nb \le mna$, Lemma \ref{srb.squeeze} implies that $\sr(nb) = 1$.  Now suppose that $b+x = b+y$ for some $x,y \in M$.  Then $nb+x = nb+y$, and so there exists $e \in M$ such that $nb = nb+e$ and $e+x = y$.  Applying Lemma \ref{agop2.1}(d), we obtain $b = b+e$, proving that $\sr(b) = 1$.

Finally, assuming that $M$ has refinement (but no longer assuming separativity), we show (5)$\Longrightarrow$(4).  Since $M(b) = M(a)$ for all $b \in M(a)$, it suffices to show that (5) implies $a$ is Hermite.  We use the specialized characterization of the Hermite property in Lemma \ref{ref+hermite}.  Suppose that $2a+x = a+y$ for some $x,y \in \langle a \rangle$.  Then we have
$$
2(a+x) = (a+x)+y,
$$
and $a+x \in M(a)$, so that we get $a+x = y$ by (5), showing that $a$ is Hermite.
\end{proof}

We note that one still cannot add ``$a$ is self-cancellative" to the list of equivalent conditions in Theorem \ref{sep.trichot.arch}, even assuming the monoid is conical as well as having refinement and being separative. Indeed, the three element monoid $M = \{0,a,2a\}$ of Example \ref{sr2.not.hermite}(2) again provides a counterexample. (The facts that $M$ is separative and has refinement are straightforward.)

As noted in Proposition \ref{hermite.etc}, with no extra hypotheses on $M$, if $M$ is strongly separative (i.e., all its elements are self-cancellative), then $M$ is Hermite (i.e., all its elements are Hermite), and conversely.  This was also previously given as the equivalence (a)$\Longleftrightarrow$(d) of Lemma \ref{agop.p126}.

\sectionnew{Stable rank values in refinement monoids}  \label{sr.refmon}

We continue to investigate sets of stable rank values, concentrating now on refinement monoids.  As before, we continue to fix a commutative monoid $M$.

When $M$ is a simple conical refinement monoid, we prove that $\sr_M(M \setminus \{0\})$ must be one of $\{1\}$, $\ZZ_{\ge2}$, or $\{\infty\}$.  In particular, if $\sr_M(M)$ is bounded above, then $M$ is cancellative.  Dropping simplicity, when $M$ is a conical refinement monoid and $C$ is an archimedean component of $M$, we prove that $\sr_M(C)$ must equal one of $\{1\}$, $\ZZ_{\ge2}$, $\{\infty\}$, or a finite subset of $\ZZ_{\ge2}$.

Recall that an element $a \in M$ is \emph{irreducible} provided $a$ is not a unit and whenever $a = b+c$ in $M$, either $b$ or $c$ is a unit.  If $M$ is conical, this just means one of $b$ or $c$ equals $0$ and the other equals $a$.

The following facts are well known; see, e.g., \cite[Lemma 2.1, Proposition 2.2]{AGtamewild}.

\begin{lemma}  \label{atom.in.crm}
Let $M$ be a conical refinement monoid.

{\rm(a)} Let $a \in M$ be an irreducible element.  Then $a$ is cancellative, $\langle a \rangle = \{ ma \mid m \in \Znn\}$, and all elements of $\langle a \rangle$ are cancellative in $M$.  In particular, $\Znn \cong \langle a \rangle$ via $m \mapsto ma$.

{\rm(b)} The submonoid of $M$ generated by all irreducible elements is an o-ideal of $M$, and all elements of this submonoid are cancellative in $M$.
\end{lemma}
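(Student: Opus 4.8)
The plan is to treat part (a) first and to bootstrap everything from the way irreducibility pins down the bottom of the algebraic order. I would begin by recording that the only elements $b \le a$ are $0$ and $a$: writing $a = b + c$, irreducibility forces one summand to be a unit, i.e.\ $0$ by conicality, so either $b = 0$ or $b = a$. The next, and conceptually central, step is the local stable-finiteness statement $a + z = a \implies z = 0$. Here $a + z = a$ gives $z \le a$, so $z \in \{0,a\}$; and $z = a$ is impossible because it would yield $2a = a$, i.e.\ a decomposition $a = a + a$ into two non-units, contradicting irreducibility. This is the step I expect to be the main obstacle, since cancellativity cannot be read off directly: the whole argument hinges on ruling out $2a = a$ via irreducibility rather than assuming any finiteness hypothesis up front.

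With this in hand, I would prove that $a$ is cancellative by applying refinement to $a + x = a + y$. Refinement produces $z_{ij}$ with $a = z_{11} + z_{12} = z_{11} + z_{21}$, $x = z_{21} + z_{22}$, $y = z_{12} + z_{22}$, and $z_{11} \le a$ gives $z_{11} \in \{0,a\}$. If $z_{11} = a$, the local stable-finiteness step forces $z_{12} = z_{21} = 0$, so $x = y = z_{22}$; if $z_{11} = 0$, then $z_{12} = z_{21} = a$, so $x = y = a + z_{22}$. Either way $x = y$. To identify $\langle a \rangle$, I would show by induction on $m$, again via refinement (of a decomposition of $(m{+}1)a$ and using the base case), that every element $\le ma$ is a multiple $ka$ with $k \le m$; hence $\langle a \rangle = \{ma \mid m \in \Znn\}$. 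Since a sum of cancellative elements is cancellative, every $ma$ is cancellative; and conicality makes the $ma$ pairwise distinct (if $ma = na$ with $m > n$, cancel $na$ to get $(m-n)a = 0$, forcing $a = 0$), so $m \mapsto ma$ is the desired isomorphism $\Znn \cong \langle a \rangle$.

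For part (b), let $N$ be the submonoid generated by the irreducible elements. Every element of $N$ is a finite sum $a_1 + \cdots + a_t$ of irreducibles, each cancellative by (a), and a sum of cancellative elements is cancellative, so all elements of $N$ are cancellative. It remains to check that $N$ is an o-ideal. Given $x \le y = a_1 + \cdots + a_t$ with each $a_i$ irreducible, I would invoke the generalized refinement property for the equation $x + w = a_1 + \cdots + a_t$ (where $y = x + w$): it yields $u_j, v_j$ with $a_j = u_j + v_j$, $x = \sum_j u_j$, and $w = \sum_j v_j$. Irreducibility of each $a_j$ forces $u_j \in \{0, a_j\}$, so $x$ is a sum of a sub-collection of the $a_j$ and hence lies in $N$. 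The only subtlety here is to use the multi-term form of refinement, which is available since the refinement property extends to all finite sums.
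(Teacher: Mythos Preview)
Your proof is correct. The paper does not supply its own argument for this lemma; it simply cites \cite[Lemma 2.1, Proposition 2.2]{AGtamewild} for the result, so there is no in-paper proof to compare against. Your argument follows the standard line one finds in the literature: first pin down the elements below $a$ using irreducibility and conicality, then rule out $2a = a$ to get local stable finiteness, then use a single refinement of $a + x = a + y$ together with the two-element nature of $\{b : b \le a\}$ to obtain cancellativity of $a$. The inductive identification of $\langle a \rangle$ via refinement of $a + ma = x + w$, the observation that sums of cancellative elements are cancellative, and the multi-term refinement argument for the o-ideal property in part (b) are all sound and routine. No gaps.
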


\begin{lemma}  \label{simple.crm.noatoms}
Let $M$ be a simple conical refinement monoid that has no irreducible elements.  For any nonzero $a_1,\dots,a_k \in M$ and $n \in \Zpos$, there exists a nonzero $c \in M$ such that $n c \le a_i$ for all $i \in [1,k]$.
\end{lemma}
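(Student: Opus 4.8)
The plan is to reduce the simultaneous requirement $nc \le a_i$ for all $i$ to a single inequality, and then to obtain an arbitrarily small multiple by repeated halving.

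First I would establish a \emph{common lower bound} principle: given any nonzero $b, b' \in M$, there is a nonzero $d \in M$ with $d \le b$ and $d \le b'$. Since $M$ is simple and conical and $b \ne 0$, the element $b$ is an order-unit, so $b' \le m b$ for some $m \in \Zpos$; write $mb = b' + e$. Applying the refinement property to the equation $b' + e = \underbrace{b + \cdots + b}_{m}$ yields a decomposition $b' = z_1 + \cdots + z_m$ with each $z_j \le b$. Since $b' \ne 0$, some $z_j$ is nonzero, and this $z_j$ satisfies $z_j \le b$ and $z_j \le b'$. Iterating over $a_1,\dots,a_k$ by induction on $k$ (taking common lower bounds two at a time), I obtain a single nonzero $d \in M$ with $d \le a_i$ for every $i \in [1,k]$. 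It therefore suffices to produce a nonzero $c$ with $nc \le d$, since then $nc \le d \le a_i$ for all $i$.

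Next I would prove a \emph{halving} step: for any nonzero $d \in M$ there is a nonzero $c$ with $2c \le d$. Because $M$ is conical and $d \ne 0$, the element $d$ is not a unit; since $M$ has no irreducible elements, $d$ admits a decomposition $d = d_1 + d_2$ with both $d_1, d_2$ nonzero. By the common lower bound principle there is a nonzero $c$ with $c \le d_1$ and $c \le d_2$, and writing $d_1 = c + e_1$ and $d_2 = c + e_2$ gives $d = 2c + (e_1 + e_2)$, so $2c \le d$. I would then iterate: starting from $c^{(0)} := d$, successive applications produce nonzero elements $c^{(1)}, c^{(2)}, \dots$ with $2 c^{(j)} \le c^{(j-1)}$, and an easy induction gives $2^j c^{(j)} \le d$ for every $j$. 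Choosing $m$ with $2^m \ge n$ and setting $c := c^{(m)}$, I obtain $nc \le 2^m c \le d \le a_i$ for all $i$, as required.

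The main obstacle is the common lower bound principle; once refinement is used to split $b'$ against a multiple of $b$ and the observation that $b' \ne 0$ forces a nonzero piece, the remaining ingredients (the splitting afforded by the absence of irreducible elements, and the doubling induction) are routine.
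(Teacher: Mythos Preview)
Your argument is correct. The paper itself does not give a proof at all; it simply cites \cite[Lemma 5.1(a)]{Par}, so your route is necessarily different in that it is self-contained.

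A few comparative remarks. Your ``common lower bound'' principle is exactly the statement that the archimedean component $M \setminus \{0\}$ is downward directed, which the paper records separately (and in greater generality) as Lemma~\ref{arch.down}, again by citation to \cite{Wembed}. Your refinement argument for it is the standard one. The halving step, combining the absence of irreducibles with the common-lower-bound principle, is a clean way to manufacture the factor $n$; an alternative is to apply the common-lower-bound principle directly to $n$ nonzero summands of $d$ (obtained by repeatedly splitting off nonzero pieces), but your doubling iteration is just as efficient and arguably tidier. Either way, the essential content is the same: simplicity plus refinement gives arbitrarily small nonzero common minorants, and the lack of irreducibles lets one split any nonzero element nontrivially.
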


\begin{proof}  \cite[Lemma 5.1(a)]{Par}.
\end{proof}

Analogs of the following theorem were first proved for (monoids of) finitely generated projective modules over stably finite simple C*-algebras (\cite[Theorem 2.2]{Rie} with \cite [Theorem A1]{Bla}) and then for (monoids of) finitely generated projective modules over simple regular rings \cite[Theorem 1.2]{AGalmost}.

\begin{theorem}  \label{simple.crm.bdd.sr}
Let $M$ be a simple conical refinement monoid.  If there exists $n \in \Zpos$ such that all elements of $M$ have stable rank at most $n$, then $M$ is cancellative.
\end{theorem}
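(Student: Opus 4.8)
The plan is to split on whether $M$ has an irreducible element and, in the difficult case, to show that failure of cancellativity forces the stable ranks to be unbounded, contradicting the hypothesis.

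First I would dispose of the case where $M$ has an irreducible element $e$. Then $\langle e \rangle = \{ me \mid m \in \Znn \}$ is a nonzero o-ideal all of whose elements are cancellative, by Lemma \ref{atom.in.crm}. Since $M$ is simple and conical, its only o-ideals are $\{0\}$ and $M$, so $\langle e \rangle = M$; hence $M \cong \Znn$ is cancellative and there is nothing more to prove. So I would assume henceforth that $M$ has no irreducible elements, which makes Lemma \ref{simple.crm.noatoms} available. Arguing by contradiction, suppose $M$ is not cancellative. Since $M$ is conical, Proposition \ref{hermite.etc}(f) produces a nonzero element that is not cancellative, i.e.\ an element of stable rank $\ge 2$.

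The engine of the argument is a \emph{division step} that turns a single element of stable rank $\ge 3$ into a contradiction with the bound $n$. Suppose $a \in M$ has $\sr(a) = k \ge 3$, and fix $N \in \Zpos$. Lemma \ref{simple.crm.noatoms} produces a nonzero $c$ with $Nc \le a$; as $c$ is nonzero in a simple conical monoid it is an order-unit, so $a \le mc$ for some $m \in \Zpos$, whence
$$
Nc \le a \le mc \le m(Nc).
$$
Lemma \ref{srb.squeeze} then gives $\sr(a) \le \sr(Nc)$, while Theorem \ref{sr.ka.refine} gives $\sr(Nc) = 1 + \bigl\lceil (\sr(c)-1)/N \bigr\rceil$ (finite, since $\sr(c)\le n$). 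Combining, $\lceil (\sr(c)-1)/N \rceil \ge k-1 \ge 2$, so $\sr(c) > N+1$; taking $N := n$ contradicts $\sr(c) \le n$. Consequently no element of $M$ can have stable rank $\ge 3$, and therefore the element found above must have stable rank exactly $2$.

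It remains to rule out that a nonzero non-cancellative element $b$ has $\sr(b) = 2$; this is the crux. From $\sr(b) \ge 2$, the negation of Proposition \ref{srequiv.refmon}(d) at $n=1$ furnishes a refinement witness $b = u+v = u+w$ with $w \not\le v$. The goal is to manufacture an element $a$ admitting a refinement
$$
2a = u' + v', \qquad a = u' + w', \qquad w' \not\le v',
$$
for then Proposition \ref{srequiv.refmon}(d) at $n=2$ yields $\sr(a) \ge 3$, contradicting the previous paragraph and finishing the proof. I would build such an $a$ by stacking two scaled copies of the defect $(u,v,w)$: using Lemma \ref{simple.crm.noatoms} to extract small nonzero pieces lying below $u$, $v$, and $w$, and then reassembling them through the refinement property into the required incomparable decomposition of some $2a$. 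The main obstacle is exactly this gluing. It is where simplicity, via the strong divisibility of Lemma \ref{simple.crm.noatoms}, is indispensable, since a lone stable-rank-$2$ defect cannot be amplified by the division step alone; and it is the monoid analogue of the module-theoretic constructions underlying Vaserstein-type stable-rank computations.
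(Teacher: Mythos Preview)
Your handling of the irreducible case and your ``division step'' (step 3) are correct: if some $a$ had $\sr(a)\ge 3$, then via Lemma \ref{simple.crm.noatoms}, Lemma \ref{srb.squeeze}, and Theorem \ref{sr.ka.refine} you indeed manufacture an element $c$ with $\sr(c)>n$, a contradiction. So far so good.

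The gap is step 4. You correctly identify that the division step breaks down at $k=2$ (it only yields $\sr(c)\ge 2$, no contradiction), and you then propose to ``stack two scaled copies of the defect'' to promote a stable-rank-$2$ witness $b=u+v=u+w$, $w\not\le v$, into a stable-rank-$3$ witness. But this is only a sketch of an intention, not an argument: you do not specify how the pieces extracted by Lemma \ref{simple.crm.noatoms} are to be reassembled, nor why the incomparability $w'\not\le v'$ survives the reassembly. This is the entire content of the theorem in the no-atoms case, and you have not supplied it. Indeed, conical refinement monoids with archimedean components on which $\sr$ is identically $2$ do exist (Example \ref{arch.comp.sr2}), so any such promotion must lean hard on simplicity in a way your outline does not make visible.

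The paper's proof avoids this difficulty entirely by arguing \emph{directly} for cancellativity rather than by contradiction on stable ranks. Given $a+x=a+y$ with (say) $x\ne 0$, Lemma \ref{simple.crm.noatoms} produces a nonzero $z$ with $nz\le x$; simplicity then gives $a+u=mz$ for some $u,m$. Adding $u$ to both sides yields $mz+x=mz+y$, and now Lemma \ref{W1.2} (using only $\sr(z)\le n$ and $nz\le x$) gives $x=y$. This uses none of the heavy machinery (no Theorem \ref{sr.ka.refine}, no Lemma \ref{srb.squeeze}), only the elementary cancellation of Lemma \ref{W1.2}. The key idea you are missing is that one should not try to bound stable ranks further, but rather absorb the obstructing element $a$ into a multiple of something small enough to sit $n$ times inside $x$.
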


\begin{proof}
If there is an irreducible element $a \in M$, then by Lemma \ref{atom.in.crm}, all elements of $\langle a \rangle$ are cancellative in $M$.  Since $\langle a \rangle = M$ by simplicity, we are done in this case.

Assume now that $M$ has no irreducible elements, and consider $a,x,y \in M$ with $a+x = a+y$.  Since there is nothing to do if $x = y = 0$, we may assume that $x \ne 0$.  By Lemma \ref{simple.crm.noatoms}, there is a nonzero $z \in M$ such that $nz \le x$.  Using simplicity again, $a+u = mz$ for some $u \in M$ and $m \in \Zpos$.  Adding $u$ to both sides of $a+x = a+y$, we obtain $mz+x = mz+y$.  Since $\sr(z) \le n$ and $nz \le x$, Lemma \ref{W1.2} implies that $x=y$.
\end{proof}

A trichotomy for the stable rank values in a simple conical refinement monoid follows:

\begin{theorem}  \label{trichot.simple.crm}
Let $M$ be a simple conical refinement monoid.  Then exactly one of the following holds:

{\rm(a)} $\sr(a) = 1$ for all $a \in M$.

{\rm(b)} $\sr(a) = \infty$ for all nonzero $a \in M$.

{\rm(c)} The set $\sr_M(M \setminus \{0\})$ equals $\ZZ_{\ge2}$.
\end{theorem}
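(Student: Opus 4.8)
The plan is to reduce the whole statement to the classification of stable-rank sets on a single archimedean component, which is already available from Theorem \ref{srC}(b), and then to eliminate the one extra possibility that classification permits. First I would observe that $M \setminus \{0\}$ is a single archimedean component. Since $M$ is conical, $\{0\} = U(M)$ is one archimedean component on its own. Since $M$ is simple and conical, every nonzero element is an order-unit, so for any nonzero $a,b \in M$ there are $n,m \in \Zpos$ with $b \le na$ and $a \le mb$, giving $a \asymp b$. Hence all nonzero elements lie in one component $C := M \setminus \{0\}$, and $\sr_M(M \setminus \{0\}) = \sr_M(C)$.

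Next I would invoke Theorem \ref{srC}(b): because $M$ is conical, $\sr_M(C)$ must be one of $\{1\}$, $\ZZ_{\ge2}$, $\{\infty\}$, or a finite subset of $\ZZ_{\ge2}$. The first three options are exactly alternatives (a), (b), (c) of the theorem, so the entire content of the proof is to rule out the last option.

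The key step is to discard the case in which $\sr_M(C)$ is a nonempty finite subset of $\ZZ_{\ge2}$. In that situation $\sr_M(C)$ is bounded above by some $n \in \Zpos$, and since the only other element of $M$ is $0$ (with $\sr(0) = 1$), every element of $M$ has stable rank at most $n$. Theorem \ref{simple.crm.bdd.sr} then forces $M$ to be cancellative, so every element is cancellative and therefore has stable rank $1$ by Proposition \ref{hermite.etc}(a). This yields $\sr_M(C) = \{1\}$, contradicting $\sr_M(C) \subseteq \ZZ_{\ge2}$. Hence the finite-subset case cannot occur.

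Finally, the three surviving possibilities $\{1\}$, $\ZZ_{\ge2}$, and $\{\infty\}$ are pairwise disjoint as subsets of $\Zpos \cup \{\infty\}$, so exactly one of (a), (b), (c) holds. The only delicate point is the single-archimedean-component observation, which turns the elementwise statements (a) and (b) into statements about the whole set $\sr_M(C)$; once that bookkeeping is in place, Theorems \ref{srC} and \ref{simple.crm.bdd.sr} carry the argument with no further computation.
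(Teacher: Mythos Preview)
Your proof is correct and follows essentially the same route as the paper's: both identify $M \setminus \{0\}$ as a single archimedean component, use Theorem~\ref{simple.crm.bdd.sr} to exclude the bounded case, and appeal to the classification of stable-rank sets on a component. The only difference is packaging: you cite the consolidated Theorem~\ref{srC}(b) directly, whereas the paper invokes its ingredients (Proposition~\ref{trichot.arch} and Theorem~\ref{srSinfinite}) separately.
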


\begin{proof}
These three conditions are obviously pairwise incompatible.  Assume that $M$ does not satisfy (a) or (b); we must prove that (c) holds.  Since $M \setminus \{0\}$ is an archimedean component of $M$ (by simplicity), Proposition \ref{trichot.arch} implies that $\sr_M(M \setminus \{0\}) \subseteq \ZZ_{\ge2}$.  There is no finite upper bound on $\sr_M(M \setminus \{0\})$, in view of Theorem \ref{simple.crm.bdd.sr}.
Therefore we conclude from Theorem \ref{srSinfinite} that $\sr_M(M \setminus \{0\}) = \ZZ_{\ge2}$.
\end{proof} 

It is easy to find simple conical refinement monoids for which cases (a) or (b) of Theorem \ref{trichot.simple.crm} hold.  E.g., for (a) take $M = \Znn$, and for (b) let $M$ be the $2$-element monoid $\{0,\infty\}$.  We will construct examples of case (c) in the following section (see Theorem \ref{simple.crm.sr[2,infty)}).

When $M$ is simple and conical, it has two archimedean components, namely $\{0\}$ and $M \setminus \{0\}$.  Assuming $M$ also has refinement, Theorem \ref{trichot.simple.crm} implies that for each archimedean component $C$ of $M$, the set $\sr_M(C)$ is equal to one of $\{1\}$, $\{\infty\}$, or $\ZZ_{\ge2}$.  In the non-simple (but conical) case, Theorem \ref{srC}(b) says that the only other possibilities are finite subsets of $\ZZ_{\ge2}$.  One such, at least, is known:

\begin{example}  \label{arch.comp.sr2}
Let $M$ be the monoid of Examples \ref{exist.sra=?}(3) and \ref{sep.with.sr2}; then $M$ is a conical refinement monoid (and also separative).  Now $M$ has three archimedean components: $\{0\}$, $\Zpos\,b$, and $\Zpos\,a$.  As noted in Example \ref{exist.sra=?}(3), $\sr(ma) = 2$ for all $m \in \Zpos$, and thus
$$
\sr_M(\Zpos\,a) = \{2\}.
$$
\end{example}

No examples are known of a conical refinement monoid $M$ that has an archimedean component $C$ such that $\sr_M(C)$ is a finite subset of $\ZZ_{\ge2}$ other than $\{2\}$.  That possibility can be ruled out when $C$ satisfies certain weak divisibility conditions, as we will show in Theorem \ref{sr.C=C+C}.

\begin{lemma}  \label{arch.sr.down}
Let $C$ be an archimedean component in a monoid $M$.  Then for any $a,b \in C$ with $a \le b$, it follows that $\sr(a) \ge \sr(b)$.
\end{lemma}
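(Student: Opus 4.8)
The plan is to reduce the statement immediately to Lemma \ref{srb.squeeze}, which already controls the stable rank of any element trapped between $a$ and a multiple $na$. The only thing to supply is such a squeeze, and this will come for free from the fact that $a$ and $b$ lie in a common archimedean component.

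First I would unpack what it means for $a,b$ to belong to the same component $C$: by definition this says $a \asymp b$, that is, $\langle a \rangle = \langle b \rangle$. In particular $b \in \langle b \rangle = \langle a \rangle$, so there is a positive integer $m$ with $b \le ma$. Combining this upper bound with the hypothesis $a \le b$ produces the chain
$$
a \le b \le ma.
$$

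This is precisely the hypothesis of Lemma \ref{srb.squeeze} (taking the integer there to be $m$), whose conclusion is $\sr(b) \le \sr(a)$; equivalently $\sr(a) \ge \sr(b)$, which is what we want. I do not expect any genuine obstacle here: the entire content is contained in Lemma \ref{srb.squeeze}, and the present statement is just the specialization obtained by noting that asymptotic equivalence furnishes the needed bound $b \le ma$. The only point meriting a sentence of care is that the lower inequality $a \le b$ is the given one and the upper inequality $b \le ma$ is the one extracted from $\langle a \rangle = \langle b \rangle$, so that the roles match those in Lemma \ref{srb.squeeze} correctly.
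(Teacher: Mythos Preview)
Your proposal is correct and matches the paper's proof essentially verbatim: the paper also extracts $b \le na$ from the archimedean equivalence and then invokes Lemma~\ref{srb.squeeze}.
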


\begin{proof}  Since $b$ is in the same component as $a$, we have $b \le na$ for some positive integer $n$.  Apply Lemma \ref{srb.squeeze}.
\end{proof}

\begin{lemma}  \label{arch.down}
{\rm\cite[Lemma 2.2]{Wembed}}
If $M$ is a refinement monoid, then every archimedean component of $M$ is downward directed.
\end{lemma}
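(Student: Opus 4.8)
The plan is to reduce downward directedness to producing, for $a,b$ in a given archimedean component $C=M(a)$, a single element $c$ with $c\le a$, $c\le b$, and $a\le nc$ for some $n\in\Zpos$. Any such $c$ automatically satisfies $\langle a\rangle\subseteq\langle c\rangle\subseteq\langle b\rangle=\langle a\rangle$, so $c\asymp a$ lies in $C$ and is a common lower bound of $a$ and $b$ inside $C$. To set this up, since $a\asymp b$ there is $n\in\Zpos$ with $a\le nb$, and (iterated) refinement applied to $a+t=\underbrace{b+\cdots+b}_{n}$ lets me write $a=c_1+\cdots+c_n$ with $c_i\le b$ for every $i$; note $c_i\le a$ as well. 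It then suffices to find a common upper bound $c$ of $c_1,\dots,c_n$ that still satisfies $c\le a$ and $c\le b$, because $c_i\le c$ for all $i$ forces $a=\sum_i c_i\le nc$.

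The heart of the argument is this interpolation step, which I would prove by induction on $n$, merging the pieces two at a time. Suppose inductively that $c'$ is a common upper bound of $c_1,\dots,c_{n-1}$ with $c'\le c_1+\cdots+c_{n-1}$ and $c'\le b$; I must merge $c'$ with $c_n$. Writing $b=c'+f=c_n+g$ and applying refinement to the equation $c'+f=c_n+g$ yields elements with $c'=p+q$, $f=r+s$, $c_n=p+r$, and $g=q+s$. Setting $c:=c'+r=p+q+r$, I get $c\ge c'$ and $c\ge p+r=c_n$, while $b=c'+f=c'+r+s=c+s$ gives $c\le b$; finally $r\le c_n$ yields $c\le(c_1+\cdots+c_{n-1})+c_n=a$. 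Thus $c$ is a common upper bound of all the $c_i$ lying below both $a$ and $b$, completing the induction (the base case $n=1$ being $c=c_1=a\le b$).

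The main obstacle is exactly this merge. One cannot simply select a single summand $c_i$, since a priori $c_i$ may lie in a strictly smaller component than $a$; and one cannot use the full sum $\sum_i c_i=a$, which need not lie below $b$. It is precisely refinement (as opposed to mere Riesz decomposition) that permits a common upper bound of the $c_i$ to be kept below $b$ while remaining below $a$. Once the interpolant $c$ is in hand, the ideal computation from the first paragraph shows $c\in C$, $c\le a$, and $c\le b$, establishing that $C$ is downward directed; the degenerate case where $a$ is a unit is immediate, since then $0$ already serves as a common lower bound lying in $C=U(M)$.
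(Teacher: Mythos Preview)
Your argument is correct. The paper does not give its own proof of this lemma; it simply cites \cite[Lemma 2.2]{Wembed}. So there is nothing in the paper to compare against directly, but your self-contained proof is sound: the reduction to finding $c$ with $c\le a$, $c\le b$, and $a\le nc$ is exactly right, and your inductive ``merge'' of the pieces $c_1,\dots,c_n$ below $b$ is a valid use of refinement. The key step---given $c',c_n\le b$, producing $c$ with $c',c_n\le c\le b$ and $c\le c'+c_n$---is essentially the Riesz interpolation property (which holds in any refinement monoid), specialized to the present situation. One small presentational point: your induction is really on the number of pieces merged so far rather than on the ambient $n$, so the base case is $c'=c_1$ (satisfying $c_1\le c'\le c_1$ and $c'\le b$), after which you successively absorb $c_2,\dots,c_n$; this is what you do, but stating it that way makes the induction cleaner.
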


\begin{theorem}  \label{sr.C=C+C}
Let $C$ be an archimedean component in a conical refinement monoid $M$, and assume that $C = C + C$ {\rm(}i.e., every element of $C$ is a sum of two elements from $C${\rm)}.  Then $\sr_M(C)$ equals one of $\{1\}$, $\{2\}$, $\ZZ_{\ge2}$, $\{\infty\}$.
\end{theorem}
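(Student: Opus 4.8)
The plan is to lean on the classification already obtained in Theorem~\ref{srC}(b) and then use the extra hypothesis $C = C+C$ to eliminate every finite subset of $\ZZ_{\ge2}$ except $\{2\}$. Since $M$ is conical, Theorem~\ref{srC}(b) says that $\sr_M(C)$ is one of $\{1\}$, $\ZZ_{\ge2}$, $\{\infty\}$, or a finite subset of $\ZZ_{\ge2}$. The first three already appear in the target list, so the entire content of the theorem is the claim that a finite subset of $\ZZ_{\ge2}$ must in fact be $\{2\}$. So I would assume from now on that $\sr_M(C)$ is a finite subset of $\ZZ_{\ge2}$, in particular bounded above with all values finite. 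Because $C$ is nonempty and every stable rank in it is at least $2$, it suffices to show that no element of $C$ has stable rank $\ge 3$; this forces every value to equal $2$.

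To that end I would argue by contradiction: suppose $a \in C$ has $\sr(a) = n \ge 3$ (finite under the standing assumption). The decisive step is to manufacture from $a$ another element of $C$ with strictly larger stable rank. Using $C = C+C$, write $a = x + y$ with $x,y \in C$. Since $M$ is a refinement monoid, $C$ is downward directed by Lemma~\ref{arch.down}, so there is some $c \in C$ with $c \le x$ and $c \le y$; adding these two order relations gives $2c \le x + y = a$. As $2c \asymp c \asymp a$, the element $2c$ again lies in $C$, and $2c \le a$ yields $\sr(2c) \ge \sr(a) = n$ by Lemma~\ref{arch.sr.down}.

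Now I would feed this into the exact multiple formula available in refinement monoids. By Theorem~\ref{sr.ka.refine}, $\sr(2c) = 1 + \lceil (\sr(c)-1)/2 \rceil$, and combining this with $\sr(2c) \ge n$ and solving the resulting ceiling inequality gives $\sr(c) \ge 2n-2$. Since $n \ge 3$ we have $2n-2 > n$ and $2n - 2 \ge 4 \ge 3$, so $c \in C$ has a strictly larger, still finite, stable rank than $a$, and the construction applies again to $c$. Iterating produces a sequence $a = a_0, a_1, a_2, \dots$ in $C$ with $\sr(a_{k+1}) \ge 2\,\sr(a_k) - 2$; setting $t_k := \sr(a_k) - 2$ gives $t_{k+1} \ge 2 t_k$ with $t_0 = n-2 \ge 1$, so $\sr(a_k) \ge 2^k + 2 \to \infty$. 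This contradicts the finiteness of $\sr_M(C)$, and the proof is complete.

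I expect the construction of $c$ to be the crux: it simultaneously uses $C = C+C$ to split $a$, downward directedness (refinement) to extract a common lower bound realizing $2c \le a$, the order-reversing behavior of $\sr$ on $C$ to get $\sr(2c) \ge n$, and the refinement-monoid formula to convert that into the lower bound $\sr(c) \ge 2n-2$. Once this single inequality is secured, the iteration and bookkeeping are routine. It is worth noting that refinement enters twice essentially — once for downward directedness and once for the exact identity $\sr(2c) = 1 + \lceil (\sr(c)-1)/2 \rceil$ — while the hypothesis $C = C+C$ is precisely what supplies the decomposition making the ``halving'' possible.
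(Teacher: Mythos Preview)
Your proof is correct and follows essentially the same route as the paper: reduce via Theorem~\ref{srC}(b) to the case of a finite subset of $\ZZ_{\ge2}$, split an element using $C=C+C$, use Lemma~\ref{arch.down} to find a common lower bound $c$ with $2c$ below the chosen element, and then combine Lemma~\ref{arch.sr.down} with Theorem~\ref{sr.ka.refine}. The only cosmetic difference is the endgame: the paper starts with an element realizing $n:=\max\sr_M(C)$, so that $\sr(b)=\sr(2b)=n$ is forced immediately and the formula $\sr(2b)=1+\lceil (n-1)/2\rceil$ yields $n\le 2$ in one step, whereas you iterate the inequality $\sr(c)\ge 2\,\sr(a)-2$ to produce an unbounded sequence of stable ranks.
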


\begin{proof}
In view of Theorem \ref{srC}(b), we only need to consider the case when $\sr_M(C)$ is a finite subset of $\ZZ_{\ge2}$.  

Set $n := \max \sr_M(C)$ and choose $c \in C$ with $\sr(c) = n$.  In view of Lemma \ref{arch.sr.down}, it follows that $\sr(a) = n$ for all $a \in C$ with $a \le c$.  By hypothesis, $c = c_1+c_2$ for some $c_1,c_2 \in C$.  Lemma \ref{arch.down} shows that there is some $b \in C$ with $b \le c_i$ for $i=1,2$, whence $2b \le c$.  Consequently, $\sr(b) = \sr(2b) = n$.  By Theorem \ref{sr.ka.refine}, $\sr(2b)$ equals $(n+1)/2$ if $n$ is odd, or $(n+2)/2$ if $n$ is even.  But this means $n \le (n+2)/2$, which is impossible unless $n=2$.  

Therefore $\sr_M(C) = \{2\}$ in this case.
\end{proof}

\sectionnew{Stable ranks ranging over $\ZZ_{\ge2}$}  \label{sr.Zge2}

This section is devoted to the construction of simple conical refinement monoids for which the stable ranks of the nonzero elements range over all of $\ZZ_{\ge2}$.

\begin{definition}  \label{def.unitary}
A monoid homomorphism $\phi : M \rightarrow N$ is \emph{weakly unitary} if
\begin{itemize}
\item Whenever $u,v \in M$ and $z \in N$ with $\phi(u) + z = \phi(v)$, then $z \in \phi(M)$.
\end{itemize}
Following \cite[Definition 1.2]{Wembed}, we say that $\phi$ is \emph{unitary} if
\begin{itemize}
\item $\phi$ is injective;
\item $\phi(M)$ is cofinal in $N$ with respect to the algebraic ordering;
\item $\phi$ is weakly unitary.
\end{itemize}
\end{definition}

We will make crucial use of one of Wehrung's embedding theorems \cite[Corollary 2.7]{Wembed}, which states that any simple conical monoid can be unitarily embedded in a simple conical refinement monoid.  

\begin{lemma}  \label{unitary.sr.incr}
Let $\phi : M \rightarrow N$ be a unitary monoid homomorphism.  Then $\sr_N(\phi(a)) \ge \sr_M(a)$ for all $a \in M$.
\end{lemma}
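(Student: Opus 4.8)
The plan is to prove the equivalent statement that $\sr_N(\phi(a)) = n < \infty$ forces $a$ to satisfy the $n$-stable rank condition in $M$, giving $\sr_M(a) \le n$; if $\sr_N(\phi(a)) = \infty$ there is nothing to prove. So I would fix an arbitrary pair $x,y \in M$ with $na + x = a + y$ and aim to produce the required witness $e \in M$ satisfying $na = a+e$ and $e+x = y$.

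First I would apply the homomorphism $\phi$ to the relation $na+x = a+y$, obtaining $n\phi(a) + \phi(x) = \phi(a) + \phi(y)$ in $N$. Since $\sr_N(\phi(a)) \le n$, the $n$-stable rank condition for $\phi(a)$ provides some $e' \in N$ with $n\phi(a) = \phi(a) + e'$ and $e' + \phi(x) = \phi(y)$. The goal is now to transport both of these equations back to $M$, and the obstacle is that \emph{a priori} the witness $e'$ lives in $N$ and need not come from $M$.

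This is exactly where the unitary hypothesis is used. Rewriting the first equation as $\phi(na) = \phi(a) + e'$ and invoking weak unitarity with $u := a$, $v := na$, and $z := e'$, I conclude $e' \in \phi(M)$, say $e' = \phi(e)$ for some $e \in M$. Then $\phi(na) = \phi(a) + \phi(e) = \phi(a+e)$ and $\phi(e) + \phi(x) = \phi(e+x) = \phi(y)$, so injectivity of $\phi$ yields $na = a+e$ and $e+x = y$. Thus $a$ satisfies the $n$-stable rank condition, and $\sr_M(a) \le n = \sr_N(\phi(a))$, as claimed.

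The main (indeed only) delicate step is pulling the witness $e'$ back into the image of $\phi$, which is precisely the content of weak unitarity; injectivity then does the routine work of transferring the two equations. It is worth noting that cofinality of $\phi(M)$ is not needed for this direction, so the inequality in fact holds for any injective weakly unitary homomorphism.
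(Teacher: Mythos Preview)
Your proof is correct and follows essentially the same approach as the paper's: reduce to showing $\sr_N(\phi(a)) \le n \implies \sr_M(a) \le n$, apply $\phi$ to the stable-rank equation, use the $n$-stable rank condition in $N$ to get a witness, pull it back via weak unitarity, and finish with injectivity. Your observation that cofinality is not needed is a nice extra remark not made explicit in the paper.
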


\begin{proof}
It suffices to prove the following: If $n \in \Zpos$ and $\sr_N(\phi(a)) \le n$, then $\sr_M(a) \le n$.

Assume $n \in \Zpos$ with $\sr_N(\phi(a)) \le n$, and let $x,y \in M$ such that $na+x = a+y$.  Since $n \phi(a) + \phi(x) = \phi(a) + \phi(y)$, there exists $f \in N$ such that $n\phi(a) = \phi(a) + f$ and $f + \phi(x) = \phi(y)$.  Unitarity of $\phi$ implies that $f = \phi(e)$ for some $e \in M$, and then $na = a+e$ and $e+x = y$ because $\phi$ is injective.  Therefore $\sr_M(a) \le n$.
\end{proof}

We aim to construct unitary embeddings of simple conical monoids into refinement monoids that nearly preserve stable ranks, at least up to an error of $1$.  Most of the work is done in terms of the following higher-level analogs of the Hermite condition.

\begin{definition}  \label{def.sr+}
Let $a \in M$ and $m \in \Zpos$.  Let us say that $a$ satisfies the \emph{strong $m$-stable rank condition in $M$} provided that whenever $ma+x = a+y$ for some $x,y \in M$, it follows that $(m-1)a + x = y$.  Clearly the strong $m$-stable rank condition implies the strong $m'$-stable rank condition for all $m' > m$.  Define the \emph{strong stable rank of $a$ in $M$}, denoted $\sr^+_M(a)$, to be the smallest positive integer $m$ such that $a$ satisfies the strong $m$-stable rank condition in $M$ (if such $m$ exist) or $\infty$ (otherwise).

The strong $m$-stable rank condition implies the $m$-stable rank condition, and Lemma \ref{W1.2} shows that the $m$-stable rank condition implies the strong $(m+1)$-stable rank condition.  Consequently,
\begin{equation}  \label{sr.vs.sr+}
\sr_M(a) \le \sr^+_M(a) \le \sr_M(a)+1 \qquad \forall\; a \in M.
\end{equation}
\end{definition}

We have implicitly used strong stable rank arguments previously, such as in Proposition \ref{max.antisymm.quo}(b) (which explains the $+1$ term there), as well as in Proposition \ref{sr=n->na.hermite}.  The interested reader is welcome to prove an analog of Theorem \ref{W1.12} for strong stable ranks.

Clearly Lemma \ref{unitary.sr.incr} also holds for strong stable rank.  In fact:

\begin{observation}  \label{injec.sr+.incr}
If  $\phi : M \rightarrow N$ is an injective monoid homomorphism, then $\sr^+_N(\phi(a)) \ge \sr^+_M(a)$ for all $a \in M$.
\end{observation}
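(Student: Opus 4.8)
The plan is to establish $\sr^+_N(\phi(a)) \ge \sr^+_M(a)$ by showing that the set of witnessing integers for $N$ is contained in that for $M$; concretely, it suffices to prove the implication: if $\phi(a)$ satisfies the strong $m$-stable rank condition in $N$, then $a$ satisfies the strong $m$-stable rank condition in $M$. Granting this, every $m \in \Zpos$ for which $\phi(a)$ satisfies the strong $m$-stable rank condition in $N$ also works for $a$ in $M$, so the least such integer for $M$ is no larger than the least such integer for $N$; and if no such integer exists for $N$, then $\sr^+_N(\phi(a)) = \infty$ and the inequality is trivial. Either way $\sr^+_M(a) \le \sr^+_N(\phi(a))$.

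To prove the implication, fix $m \in \Zpos$ and suppose $\phi(a)$ satisfies the strong $m$-stable rank condition in $N$. Let $x,y \in M$ with $ma + x = a + y$. Applying $\phi$ gives $m\phi(a) + \phi(x) = \phi(a) + \phi(y)$ in $N$, so the hypothesis on $\phi(a)$ yields $(m-1)\phi(a) + \phi(x) = \phi(y)$, that is, $\phi\bigl((m-1)a + x\bigr) = \phi(y)$. Since $\phi$ is injective, $(m-1)a + x = y$ in $M$, which is exactly the conclusion required for $a$ to satisfy the strong $m$-stable rank condition in $M$.

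I do not expect any genuine obstacle here; the statement is essentially immediate from the shape of the strong stable rank condition, and the only point worth emphasizing is the contrast with Lemma \ref{unitary.sr.incr}. There, for ordinary stable rank, one needed the full unitarity hypothesis because the conclusion of the $n$-stable rank condition is existential --- it produces an element $f \in N$ that must be pulled back into $\phi(M)$, and weak unitarity is precisely what guarantees $f \in \phi(M)$. For the strong stable rank condition the conclusion is instead a bare equation, so no element needs to be recovered from the image; injectivity of $\phi$ alone lets the equation descend to $M$, and that is the whole point.
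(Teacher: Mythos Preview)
Your proof is correct and is exactly the immediate argument the paper has in mind; indeed, the paper records this as an Observation without proof, and your verification via injectivity (pushing the equation $ma+x=a+y$ forward, applying the strong $m$-stable rank condition in $N$, and pulling the resulting equation back) is the intended one. Your closing remark contrasting this with Lemma~\ref{unitary.sr.incr} is also accurate and worth keeping.
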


Given a congruence $\equiv$ on a monoid $N$, let $\pi_\equiv : N \rightarrow N/{\equiv}$ denote the quotient map.

\begin{lemma}  \label{lem1}
Let $N$ be a monoid, $b \in N$, and $m \in \Zpos$.

{\rm(a)} There is a smallest congruence $\equiv$ on $N$ such that $\sr^+_{N/{\equiv}}(\pi_{\equiv}(b)) \le m$.

{\rm(b)} If $u,v \in N$ satisfy $u \equiv v$, then either $u=v$ or there exist a positive integer $k$ and a sequence of elements $u_0=u,u_1,\dots, u_t=v$ in $N$ such that for each $j\in [1,t]$, one of $(u_{j- 1},u_j)$ or $(u_j,u_{j-1})$ equals $((m-1)b+x_j,y_j)$ for some $x_j,y_j \in N$ with $(k+m-1)b + x_j = kb + y_j$.

{\rm(c)} If $u,v \in N$ satisfy $u \equiv v$, then there is some $w \in \Znn\,b$ such that $w+u = w+v$.
\end{lemma}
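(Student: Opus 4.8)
The plan is to exhibit the smallest congruence explicitly and then read off all three parts from its construction. For each $k \in \Zpos$ let $G_k$ denote the set of pairs $\bigl((m-1)b + x,\, y\bigr)$ with $x,y \in N$ satisfying $(k+m-1)b + x = kb + y$ in $N$, and let $\equiv^{(k)}$ be the congruence on $N$ generated by $G_k$. A routine check shows that $G_k$ is closed under translation (adding a fixed $r \in N$ to both coordinates of a pair in $G_k$ produces another pair in $G_k$), so $\equiv^{(k)}$ is simply the reflexive–symmetric–transitive closure of $G_k$; in particular, for $u \ne v$, one has $u \equiv^{(k)} v$ exactly when $u$ and $v$ are joined by a chain of the type described in (b) for this value of $k$. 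Adding $b$ to the defining equation of $G_k$ shows $G_k \subseteq G_{k+1}$, so the $\equiv^{(k)}$ form an increasing chain, and I set $\equiv_\infty := \bigcup_k \equiv^{(k)}$, which is then itself a congruence. The whole lemma will follow once I show that $\equiv_\infty$ is the smallest congruence with the required property.

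The first key step, which doubles as the proof of (c), is that $u \equiv_\infty v$ implies $kb + u = kb + v$ for a single $k$. Indeed, $u \equiv_\infty v$ means $u \equiv^{(k)} v$ for some $k$, so $u$ and $v$ are joined by a $G_k$-chain; and for any step, say with $\{u_{j-1},u_j\} = \{(m-1)b+x_j,\, y_j\}$ and $(k+m-1)b+x_j = kb+y_j$, one has $kb + \bigl((m-1)b+x_j\bigr) = (k+m-1)b+x_j = kb+y_j$, so that adding $kb$ to $u_{j-1}$ and to $u_j$ yields equal elements, giving $kb + u_{j-1} = kb + u_j$. Chaining these equalities gives $kb + u = kb + v$, which is (c) with $w := kb$.

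Next I would verify that $\equiv_\infty$ satisfies the strong $m$-stable rank condition for the image of $b$, i.e.\ that $mb + x \equiv_\infty b + y$ forces $(m-1)b + x \equiv_\infty y$; this is the crux of the argument. Applying the previous step to $mb + x \equiv_\infty b + y$ produces a $k$ with $kb + mb + x = kb + b + y$, which I rewrite as $\bigl((k+1)+m-1\bigr)b + x = (k+1)b + y$. This is precisely the assertion that $\bigl((m-1)b + x,\, y\bigr) \in G_{k+1}$, whence $(m-1)b + x \equiv_\infty y$ in a single step. The reconciliation of the congruence hypothesis with a genuine equality in $N$, made possible by the nesting $G_k \subseteq G_{k+1}$ (which lets a freshly obtained relation be recognized as a generator at the next level), is what makes this work and is the part I expect to require the most care.

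Finally, to see that $\equiv_\infty$ is contained in every congruence $\equiv'$ satisfying the condition — and hence is the smallest such, proving (a) and identifying $\equiv = \equiv_\infty$ — I would show $G_k \subseteq {\equiv'}$ for every $k$. Starting from the equality $(k+m-1)b + x = kb + y$ and applying the strong $m$-stable rank condition for $\equiv'$ a total of $k$ times, peeling off one copy of $b$ from each side at each application (valid since at the $j$-th stage one has $(m+k-1-j)b + x \equiv' (k-j)b + y$ with $j \le k-1$), yields $(m-1)b + x \equiv' y$, so $G_k \subseteq {\equiv'}$ and therefore $\equiv_\infty \subseteq {\equiv'}$. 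With $\equiv_\infty$ shown both to satisfy the condition and to be minimal, part (a) holds with $\equiv = \equiv_\infty$, part (b) is exactly the chain description of the congruences $\equiv^{(k)}$, and part (c) was established in the second step.
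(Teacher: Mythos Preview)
Your proof is correct and follows essentially the same approach as the paper: both construct the minimal congruence explicitly as the equivalence closure of the translation-stable set of pairs $\bigl((m-1)b+x,\,y\bigr)$ with $(k+m-1)b+x = kb+y$ for some $k$, and then verify that this closure already satisfies the strong $m$-stable rank condition. The only organizational difference is that you stratify by $k$ first (defining $G_k$ and $\equiv^{(k)}$ and using the nesting $G_k \subseteq G_{k+1}$ to get a uniform $k$ along any chain), whereas the paper takes the union over $k$ at the outset and then passes to $k = \max_j k_j$ after the fact; your iteration of the strong $m$-stable rank condition $k$ times to prove $G_k \subseteq {\equiv'}$ is also more explicit than the paper's one-line assertion that its generating relation is contained in $\equiv$.
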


\begin{proof}
(a) The stated property is a universally quantified implication for any congruence $\approx$, since $\sr^+_{N/{\approx}}(\pi_{\approx}(b)) \le m$ if and only if
\begin{equation}  \label{sr+mod.cong}
(mb+x \approx b+y) \implies ((m-1)b+x \approx y), \qquad \forall\; x,y \in N.
\end{equation}
Thus, any intersection of congruences satisfying \eqref{sr+mod.cong} will satisfy \eqref{sr+mod.cong}.
Take $\equiv$ to be the intersection of the set of all congruences on $N$ satisfying \eqref{sr+mod.cong} (which is nonempty because it contains $N\times N$). 

(c) This will follow from (b) by taking $w := 0$ or $w := kb$ in the respective cases.

(b) It is convenient to set $n:= m-1\in \ZZ_{\geq 0}$ and
$$
X:=\{(x,y)\in N\times N \mid (n+k)b+x = kb+y \text{\ for some\ } k\in \ZZ_{>0}\}. 
$$
Observe that $X$ is closed under $N$-translations, in the sense that $(x,y)\in X$ implies that $(x+z,y+z)\in X$ for any $z\in N$. Let 
$$
\sim\ := (nb,0)+X = \{ (nb+x,y) \mid (x,y) \in X \},
$$
and let $\approx$ be the smallest equivalence relation containing $\sim$. Notice that $\approx$ is a congruence, due to closure under $N$-translations. Since $\sim$ is contained in $\equiv$, so is $\approx$.

We now show that $\approx$ satisfies the property stated in (b) for $\equiv$.  Suppose that $u,v\in N$ with $u\approx v$ but $u \ne v$.  Then there is a sequence of elements $u_0=u,u_1,\ldots, u_t=v$ in $N$ such that for each $j\in [1,t]$ we have $(u_{j- 1},u_j)$ in $\sim$ or $\sim^{-1}$. For each such $j$, there exists $(x_j,y_j) \in X$ such that $(u_{j- 1},u_j)$ equals $(nb+x_j, y_j)$ or $(y_j, nb+x_j)$.  By definition of $X$, there are some $k_j\in \ZZ_{>0}$ for which $(n+k_j) b + x_j = k_jb + y_j$, whence $k_jb+ u_{j-1} = k_jb +u_j$. Thus, taking $k$ to be the maximum of these $k_j$, we see that $(k+m-1)b + x_j = kb + y_j$ for $j \in [1,t]$.  Moreover, $kb+ u_{j-1} = kb +u_j$ for $j \in [1,t]$, whence $kb+u = kb+v$.

This verifies the condition of (b) for $\approx$.  Part (b) itself will follow once we show that $\approx$ is the same as $\equiv$.

To that end, it suffices to show that $\pi_{\approx}(b)$ satisfies the strong $m$-stable rank condition in $N/{\approx}$. Suppose that $m\pi_{\approx}(b) + x = \pi_{\approx}(b) + y$, for some $x,y\in N/{\approx}$. Fix $x',y'\in N$ with $\pi_{\approx}(x')=x$ and $\pi_{\approx}(y')=y$. Thus, $mb+x'\approx b+y'$.  From the previous paragraph, we know that there exists some positive integer $k$ such that $(n+k)b+x' = kb+y'$.  Hence, $nb+x' \sim y'$ and therefore $n\pi_{\approx}(b)+x=y$.
\end{proof}

\begin{lemma}  \label{lem2}
Let $N$ be a monoid, $(b_k)_{k\in K}$ a nonempty family of elements of $N$, and $(m_k)_{k\in K}$ a corresponding family of positive integers.

{\rm(a)} There is a smallest congruence $\equiv$ on $N$ such that $\sr^+_{N/{\equiv}}(\pi_{\equiv}(b_k)) \le m_k$ for all $k \in K$.

{\rm(b)} If $u,v \in N$ satisfy $u \equiv v$, then there is some $w$ in the submonoid $\sum_{k\in K} \Znn\,b_k$ such that $w+u = w+v$.
\end{lemma}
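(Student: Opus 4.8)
The plan is to dispose of (a) exactly as in Lemma~\ref{lem1}(a), and to prove (b) by exhibiting one explicit congruence having the desired ``witness'' property and then invoking the minimality from (a).

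For (a), I would first record that, for a fixed $k$, the requirement $\sr^+_{N/{\approx}}(\pi_{\approx}(b_k)) \le m_k$ is, exactly as in \eqref{sr+mod.cong}, the universally quantified implication
$$
(m_k b_k + x \approx b_k + y) \implies ((m_k-1) b_k + x \approx y), \qquad \forall\; x,y \in N.
$$
I would then check that the family of these implications (one for each $k \in K$) is stable under arbitrary intersections of congruences: if each $\theta_i$ satisfies all of them and $\theta := \bigcap_i \theta_i$, then $m_k b_k + x \mathrel{\theta} b_k + y$ gives $m_k b_k + x \mathrel{\theta_i} b_k + y$ for every $i$, hence $(m_k-1) b_k + x \mathrel{\theta_i} y$ for every $i$, hence $(m_k-1) b_k + x \mathrel{\theta} y$. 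Since $N \times N$ satisfies every implication, the collection of congruences satisfying all of them is nonempty, and its intersection is the sought smallest congruence $\equiv$.

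For (b) I would resist imitating the explicit chain construction of Lemma~\ref{lem1}(b): a connecting element produced by such a chain would in general be a combination of several distinct generators $b_k$, and a mixed element of that form cannot be absorbed into the single-generator pattern $X$ and $\sim$ used in the proof of Lemma~\ref{lem1}(b) --- which is precisely the move making the one-generator argument succeed. Instead, the plan is to introduce the relation
$$
\mathcal{W} := \{ (u,v) \in N \times N \mid w+u = w+v \ \text{for some}\ w \in \textstyle\sum_{k\in K} \Znn\, b_k \}
$$
and to prove that $\mathcal{W}$ is a congruence on $N$ satisfying all the implications displayed above; minimality of $\equiv$ then forces $\equiv \subseteq \mathcal{W}$, which is exactly the assertion of (b).

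Verifying that $\mathcal{W}$ is a congruence is routine: reflexivity and symmetry are immediate, translation by any $z \in N$ preserves the witness $w$, and transitivity uses commutativity, since $w_1 + u = w_1 + v$ and $w_2 + v = w_2 + v'$ yield $(w_1+w_2) + u = (w_1+w_2) + v'$ with $w_1+w_2 \in \sum_{k} \Znn\, b_k$. The key computation, and the only real obstacle, is to show that $\mathcal{W}$ satisfies the implication for each $k$: given $m_k b_k + x \mathrel{\mathcal{W}} b_k + y$, there is some $w \in \sum_k \Znn\, b_k$ with $w + m_k b_k + x = w + b_k + y$; setting $w' := w + b_k$ and using $m_k b_k = (m_k-1) b_k + b_k$, one gets $w' + (m_k-1) b_k + x = w + m_k b_k + x = w + b_k + y = w' + y$, so that $(m_k-1) b_k + x \mathrel{\mathcal{W}} y$ with witness $w' \in \sum_k \Znn\, b_k$. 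The one insight required is to bypass the chain argument in favour of testing the witness relation $\mathcal{W}$ directly against the universal property established in (a); after that, everything is a short monoid computation.
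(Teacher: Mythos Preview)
Your proof is correct, and it is genuinely different from the paper's argument. For (a) you and the paper agree. For (b), the paper proceeds by transfinite induction on $\card(K)$: the base case $\card(K)=1$ is Lemma~\ref{lem1}(c), and the inductive step expresses $\equiv$ as the union of an increasing chain of congruences $\approx_j$, each obtained by imposing the strong stable rank conditions for a proper subfamily of the $b_k$, so that the inductive hypothesis applies to each $\approx_j$. Your approach bypasses all of this by observing directly that the witness relation $\mathcal{W}$ is itself a congruence satisfying every one of the defining implications, so that minimality of $\equiv$ forces $\equiv \subseteq \mathcal{W}$. The key computation --- that $w + m_k b_k + x = w + b_k + y$ implies $w' + (m_k-1)b_k + x = w' + y$ with $w' := w + b_k$ --- is sound, and the fact that $b_k$ already lies in the generating set is exactly what makes $w'$ an admissible witness.

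What each approach buys: yours is dramatically shorter and more conceptual for the statement of (b) as written. The paper's longer argument, however, yields as a by-product the explicit filtration $\equiv = \bigcup_j \approx_j$, and this filtration is invoked again in the proof of Proposition~\ref{prop3}(b) to carry out the induction establishing injectivity of $\pi_\equiv \phi$. So while your argument settles Lemma~\ref{lem2}(b) cleanly, the paper's proof is doing double duty for what comes next.
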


\begin{proof}
(a) This holds for the same reason as Lemma \ref{lem1}(a).

(b) We induct on $\kappa := \card(K)$, the case $\kappa = 1$ being Lemma \ref{lem1}(c).  Now let $\kappa > 1$ and assume that part (b) holds for congruences obtained as in (a) from nonempty families indexed by sets with cardinality less than $\kappa$.  We break the induction step into the cases when $\kappa$ is finite or infinite.

($\kappa$ finite).  Identify $K$ with $[1,\ell]$ for some integer $\ell \ge 2$.  We define a countable sequence of congruences
$$
\approx_0\ \subseteq\ \approx_1\ \subseteq\ \approx_2\ \subseteq\ \cdots\ \subseteq\ \approx
$$
on $N$ as follows.  First, let $\approx_0$ be the equality relation.  Supposing that $\approx_j$ has been defined for some even nonnegative integer $j$, let $\approx_{j+1}$ be the smallest congruence containing $\approx_j$ such that $\sr^+_{N/{\approx_{j+1}}}(\pi_{\approx_{j+1}}(b_k)) \le m_k$ for all $k \in [1,\ell-1]$.  (This congruence exists by applying the inductive hypothesis to $N/{\approx_j}$ coupled with the correspondence theorem for congruences on $N/{\approx_j}$ and $N$.)  Once $\approx_j$ has been defined for some odd positive integer $j$,  let $\approx_{j+1}$ be the smallest congruence containing $\approx_j$ such that $\sr^+_{N/{\approx_{j+1}}}(\pi_{\approx_{j+1}}(b_\ell)) \le m_\ell$.  Finally, let $\approx$ be the union of the chain of $\approx_j$.

An easy inductive argument shows that all $\approx_j$ are contained in $\equiv$, whence $\approx\ \subseteq\ \equiv$.  On the other hand, it is straightforward to show that $\sr^+_{N/{\approx}}(\pi_{\approx}(b_k)) \le m_k$ for all $k \in [1,\ell]$.  Thus, $\approx$ equals $\equiv$.

Now consider $u,v \in N$ satisfying $u \equiv v$.  Then $u \approx_j v$ for some $j \ge 0$, and we proceed by a secondary induction on $j$.  If $j=0$, then $u = v$, so we assume that $j>0$.  Writing $i := j-1$ and letting $\equiv_j$ denote the congruence on $N/{\approx_i}$ induced from $\approx_j$, we have $\pi_{\approx_i}(u) \equiv_j \pi_{\approx_i}(v)$.  If $i$ is even, it follows from the construction of $\approx_j$ and the inductive hypothesis that $w+\pi_{\approx_i}(u) = w+\pi_{\approx_i}(v)$ for some $w \in \sum_{k=1}^{\ell-1} \Znn\,\pi_{\approx_i}(b_k)$.  Similarly, if $i$ is odd, it follows from the construction of $\approx_j$ and Lemma \ref{lem1}(b) that $w+\pi_{\approx_i}(u) = w+\pi_{\approx_i}(v)$ for some $w \in \Znn\,\pi_{\approx_i}(b_\ell)$.  In either case, we obtain an element $w_i \in \sum_{k\in K} \Znn\,b_k$ such that $w_i + u \approx_i w_i + v$.  By our secondary induction, there is some $w \in \sum_{k\in K} \Znn\,b_k$ such that $w+w_i+u = w+w_i+v$.  This completes the secondary induction, and the proof of (b), for $K = [1,\ell]$.

The inductive step for our primary induction is now done for finite $\kappa$, proving that (b) holds for all finite index sets $K$.

($\kappa$ infinite).  Identify $K$ with the set of ordinals less than $\lambda$, where $\lambda$ is the first ordinal with cardinality $\kappa$.  Let $\approx_0$ be the equality relation on $N$, and for nonzero $j \in K$, let $\approx_j$ be the smallest congruence on $N$ such that $\sr^+_{N/{\approx_j}}(\pi_{\approx_j}(b_k)) \le m_k$ for all $k \in K$ with $k<j$.  Set $\approx$ equal to the union of the $\approx_j$ for $j \in K$.  Clearly $\approx_i\ \subseteq\ \approx_j\ \subseteq\ \equiv$ for $0 \le i \le j < \lambda$, whence $\approx$ is a congruence on $N$ contained in $\equiv$.  Again, it is straightforward to show that $\sr^+_{N/{\approx}}(\pi_{\approx}(b_k)) \le m_k$ for all $k \in K$.  Thus, $\approx$ equals $\equiv$.

To establish (b) for the current index set $K$, it suffices to show that if $u,v \in N$ and $u \approx_j v$ for some $j \in K$, then there exists $w \in \sum_{k\in K} \Znn\,b_k$ such that $w+u = w+v$.  If $j=0$, then $u=v$ and we are done.  If $j>0$, then the set $K'$ of ordinals less than $j$ has cardinality less than $\kappa$, and our inductive step provides an element $w \in \sum_{k\in K'} \Znn\,b_k$ such that $w+u = w+v$. 

This completes the primary induction and the proof of (b).
\end{proof}

\begin{proposition}  \label{prop3}
Let $\phi : M \rightarrow N$ be a homomorphism of monoids, $(a_k)_{k\in K}$ a nonempty family of elements of $M$, and $(m_k)_{k\in K}$ a corresponding family of positive integers.  Let $\equiv$ be the smallest congruence on $N$ such that $\sr^+_{N/{\equiv}}(\pi_{\equiv}(\phi(a_k))) \le m_k$ for all $k \in K$.

{\rm(a)} If $\phi$ is weakly unitary, then $\pi_{\equiv} \phi$ is weakly unitary.

Now assume that $\phi$ is unitary, and that $\sr^+_M(a_k) \le m_k$ for all $k \in K$.

{\rm(b)} $\pi_{\equiv} \phi$ is unitary.

{\rm(c)}  If $N$ is simple, then $N/{\equiv}$ is simple.

{\rm(d)} If $N$ is conical, then $N/{\equiv}$ is conical.
\end{proposition}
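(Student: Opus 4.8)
The plan is to route all four parts through a single \emph{pull-back principle}, to the effect that the congruence $\equiv$ never identifies two elements of $N$ once one of them lies in $\phi(M)$. Write $b_k:=\phi(a_k)$ and $W:=\sum_{k\in K}\Znn\,b_k\subseteq\phi(M)$, and recall from Lemma~\ref{lem2}(b) that $u\equiv v$ implies $w+u=w+v$ for some $w\in W$. Part~(a) drops out immediately: given $u,v\in M$ and $z\in N/{\equiv}$ with $\pi_{\equiv}\phi(u)+z=\pi_{\equiv}\phi(v)$, lift $z$ to $z_0\in N$, so that $\phi(u)+z_0\equiv\phi(v)$; Lemma~\ref{lem2}(b) supplies $w=\phi(s)\in W$ with $\phi(s+u)+z_0=\phi(s+v)$, and weak unitarity of $\phi$ then forces $z_0\in\phi(M)$, whence $z\in\pi_{\equiv}\phi(M)$.

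For parts (b)--(d) we have $\phi$ unitary, so I identify $M$ with the submonoid $\phi(M)\subseteq N$; thus $b_k=a_k$, $W\subseteq M$, and weak unitarity reads: $u+z=v$ with $u,v\in M$ and $z\in N$ implies $z\in M$. The crux --- and the step I expect to be the main obstacle --- is the \emph{pull-back principle}: \emph{if $p\equiv p'$ in $N$ with $p'\in M$, then $p=p'$.} I would prove it first when $\equiv$ comes from a single pair $(a,m)$ as in Lemma~\ref{lem1}, using the explicit chain $p=p_0,p_1,\dots,p_t=p'$ of Lemma~\ref{lem1}(b), in which each consecutive pair, up to order, is $((m-1)a+x_j,\,y_j)$ with $(k+m-1)a+x_j=ka+y_j$ for a common $k\ge1$. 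Starting from $p_t=p'\in M$ and applying weak unitarity once or twice per link, a downward induction shows that every $p_j$ and every $x_j,y_j$ lies in $M$; since $M$ is a submonoid, each relation $(k+m-1)a+x_j=ka+y_j$ then holds in $M$.

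It remains to collapse each link inside $M$, and this is exactly where the hypothesis $\sr^+_M(a_k)\le m_k$ enters. From $(k+m-1)a+x_j=ka+y_j$ with $\sr_M(a)\le m$ (by \eqref{sr.vs.sr+}), the cancellation form of Lemma~\ref{W1.2} strips off copies of $a$ one at a time down to $ma+x_j=a+y_j$, and then the strong $m$-stable rank condition $\sr^+_M(a)\le m$ gives $(m-1)a+x_j=y_j$; hence $p_{j-1}=p_j$ for all $j$ and $p=p'$. I emphasize that the coarse consequence $s+u=s+v$ of Lemma~\ref{lem2}(b) is \emph{not} enough to cancel (indeed $a_k$ need not be cancellative), so the finer chain of Lemma~\ref{lem1}(b) coupled with weak unitarity is essential. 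Granting the principle, injectivity of $\pi_{\equiv}\phi$ is the case $p,p'\in M$; cofinality of $\pi_{\equiv}\phi(M)$ follows by pushing forward, through $\pi_{\equiv}$, the relations witnessing cofinality of $\phi(M)$ in $N$; and weak unitarity is part~(a). This proves (b).

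Applying the principle with $p'=0$ shows that the $\equiv$-class of $0$ is $\{0\}$, and (c),(d) follow. For (d): $[u]+[v]=[0]$ gives $u+v\equiv0$, hence $u+v=0$ in $N$, and conicality of $N$ yields $u=v=0$. For (c): every proper o-ideal $I$ of $N/{\equiv}$ satisfies $I=\pi_{\equiv}(\pi_{\equiv}^{-1}(I))$ with $\pi_{\equiv}^{-1}(I)$ a proper o-ideal of $N$, hence $=U(N)$ by simplicity, so $\pi_{\equiv}(U(N))$ is the unique proper o-ideal; moreover $N/{\equiv}$ is not a group, since a non-unit $n\in N$ (one exists as $N$ is not a group) stays a non-unit --- any $[n]+[n']=[0]$ would force $n+n'\equiv0$, hence $n+n'=0$, a contradiction. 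Finally, for a general family $(a_k)_{k\in K}$ the principle is lifted from the single-pair case by mirroring the transfinite construction of $\equiv$ in the proof of Lemma~\ref{lem2}: at each stage the intermediate map $M\to N/{\approx_j}$ is weakly unitary by part~(a) and injective with trivial $0$-class by the inductive hypothesis, so the single-pair argument applies over $N/{\approx_j}$ and passes to the union.
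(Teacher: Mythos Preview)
Your argument is correct and follows essentially the same route as the paper: part~(a) via Lemma~\ref{lem2}(b), injectivity for a single pair via the explicit chain of Lemma~\ref{lem1}(b) together with weak unitarity and the strong stable rank hypothesis, and the passage to arbitrary families by transfinite induction mirroring the proof of Lemma~\ref{lem2}. The one genuine difference is organisational: you isolate a stronger \emph{pull-back principle} (in particular, that the $\equiv$-class of $0$ is $\{0\}$) and use it to dispatch (c) and (d) directly, whereas the paper deduces (d) from Lemma~\ref{lem2}(b) plus weak unitarity and the injectivity established in (b), and proves (c) by a separate order-unit argument. Your packaging is a little cleaner.

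Two small points of precision. First, in your last paragraph the phrase ``the single-pair argument applies over $N/{\approx_j}$'' is not accurate for the finite step of the primary induction: in the construction of Lemma~\ref{lem2}, the passage from $\approx_j$ to $\approx_{j+1}$ at even $j$ imposes the conditions for the subfamily indexed by $[1,\ell-1]$, which has $\ell-1$ elements; what you actually invoke there is the primary inductive hypothesis on families of size $<|K|$, not the base case. Second, since your treatment of (c) and (d) relies on the trivial $0$-class (not merely injectivity), make sure both the primary and the secondary inductions are stated for the combined property ``injective with trivial $0$-class''; this does go through (the verification is the same as for injectivity alone), but your wording should reflect that this is what is being inducted.
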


\begin{proof}
Let us abbreviate $N' := N/{\equiv}$ and $\pi := \pi_{\equiv}$, and set $b_k := \phi(a_k)$ for $k \in K$.

(a) Suppose that $u,v \in M$ and $z' \in N'$ with $\pi\phi(u) + z' = \pi\phi(v)$.  Fix some $z \in N$ such that $\pi(z) = z'$; then $\phi(u) + z \equiv \phi(v)$.  By Lemma \ref{lem2}(b), there is some $w \in \sum_{k\in K} \Znn\,b_k$ such that $w+ \phi(u) + z = w+ \phi(v)$.  In particular, $w \in \phi(M)$, and hence the weak unitarity of $\phi$ implies that $z \in \phi(M)$.  Therefore $z' \in \pi\phi(M)$.

For the remainder of the proof, assume that $\phi$ is unitary, and that $\sr^+_M(a_k) \le m_k$ for all $k \in K$.

(b) Since $\phi(M)$ is cofinal in $N$, it follows immediately that $\pi\phi(M)$ is cofinal in $N'$.  Further, $\pi\phi$ is weakly unitary by (a).  Hence, we just need to show that $\pi\phi$ is injective.  We proceed by transfinite induction on $\kappa := \card(K)$.

Suppose first that $\kappa=1$, and take $K = \{1\}$.  Let $u,v \in M$ such that $\pi\phi(u) = \pi\phi(v)$.  By Lemma \ref{lem1}(b), either $u=v$ or there exist a positive integer $k$ and a sequence of elements $u_0= \phi(u) ,u_1,\dots, u_t= \phi(v)$ in $N$ such that for each $j\in [1,t]$, one of $(u_{j- 1},u_j)$ or $(u_j,u_{j-1})$ equals $((m_1-1)b_1 +x_j,y_j)$ for some $x_j,y_j \in N$ with $(k+m_1-1)b_1 + x_j = kb_1 + y_j$.  Assume that $u\ne v$.  After removing any repeated terms from the sequence of $u_j$, we may assume that $t > 0$ and $u_{j-1} \ne u_j$ for $j \in [1,t]$.  In particular, $u_0 \ne u_1$ and $kb_1 + u_0 = kb_1 + u_1$.  In other words, $\phi(ka_1 + u) = \phi(ka_1) + u_1$.  The unitary assumption guarantees that $u_1\in \phi(M)$, say $u_1 = \phi(u'_1)$.

We will only consider the case when $(u_0,u_1) = ((m_1-1)b_1 +x_1,y_1)$, as the other case is very similar.  The unitary hypothesis guarantees that $x_1 = \phi(x'_1)$ for some $x'_1 \in M$.  Consequently, $u = (m_1-1)a_1 + x'_1$.  Since $(k+m_1-1)b_1 + x_1 = kb_1 + y_1$, we also have $(k+m_1-1)a_1 + x'_1 = ka_1 + u'_1$.  Then, the strong $m_1$-stable rank condition on $a_1$ in $M$ tells us that $u =u'_1$.  But this means $u_0 = u_1$, a contradiction.  Therefore $u=v$ in this case, proving that $\pi\phi$ is injective when $\kappa = 1$.

Now suppose that $\kappa \ge 2$ and that the proposition holds for nonempty families of fewer than $\kappa$ elements.  If $\kappa$ is finite, express $\equiv\; =\, \bigcup_{j=0}^\infty \approx_j$ as in the proof of Lemma \ref{lem2}(b).  By induction, injectivity of $\pi_{\equiv_j} \phi$ implies injectivity of $\pi_{\equiv_{j+1}} \phi$ for all $j \ge 0$.  Consequently, $\pi \phi$ is injective.

When $\kappa$ is infinite, identify $K$ with the set of ordinals less than the first ordinal of cardinality $\kappa$, and express $\equiv\; = \bigcup_{j \in K} \approx_j$ as in the proof of Lemma \ref{lem2}(b).  By induction, $\pi_{\equiv_j} \phi$ is injective for all $j \in K$, forcing $\pi \phi$ to be injective in this case.

This concludes the induction, proving that $\pi\phi$ is injective, and thus unitary.

(c) Assuming $N$ is simple, it contains an element $x$ that is not a unit.  Since $\phi(M)$ is cofinal in $N$, there is some $y \in M$ such that $x \le \phi(y)$.  If $y$ is a unit in $M$, then $y \le 0$ in $M$, whence $x \le \phi(y) \le 0$ in $N$, contradicting our assumption on $x$.  Thus, $y$ is not a unit in $M$.  If $\pi\phi(y)$ is a unit in $N'$, say with inverse $z$, then from $\pi\phi(y) + z = \pi\phi(0)$ and unitarity of $\pi\phi$ we obtain $z = \pi\phi(z')$ for some $z' \in M$.  But then $y+z' = 0$, contradicting the fact that $y$ is a non-unit in $M$.  Thus, $\pi\phi(y)$ is not a unit in $N'$, which shows that $N'$ is not a group.

Given an ideal $I$ of $N'$ different from the group of units, choose a 
non-unit $c \in I$, and write $c = \pi(b)$ for some non-unit $b \in N$.  Since $N$ is simple, $b$ is an order-unit in $N$.  Consequently, $c$ is an order-unit in $N'$ and thus $I = N'$.  Therefore $N'$ is simple.

(d) Suppose $x,y \in N'$ with $x+y = 0$.  Write $x = \pi(x')$ and $y = \pi(y')$ for some $x',y' \in N$, so that $x'+y' \equiv 0$.  By Lemma \ref{lem2}(b), there is some $w \in \sum_{k\in K} \Znn\,b_k$ such that $w+x'+y' = w$.  Since $w \in \phi(M)$, unitarity of $\phi$ implies that $x'+y' = \phi(z)$ for some $z \in M$.  Then $\pi\phi(z) = \pi\phi(0)$, whence $z=0$ and consequently $x'+y' = 0$.  Since $N$ is conical, $x' = y' = 0$, and therefore $x = y = 0$.
\end{proof}

\begin{theorem}  \label{simple.crm.keep.sr+.unitary}
Let $M_0$ be a simple conical monoid.  Then there exist a simple conical refinement monoid $N$ and a unitary embedding $\phi : M_0 \rightarrow N$ such that 
$$
\sr^+_N(\phi(a)) = \sr^+_{M_0}(a) \qquad \forall\; a \in M_0 \,.
$$
Consequently,
$$
\sr_{M_0}(a) \le \sr_N(\phi(a)) \le \sr_{M_0}(a) + 1 \qquad \forall\; a \in M_0 \,.
$$
\end{theorem}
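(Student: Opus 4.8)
The plan is to build $N$ as the direct limit of an alternating tower that repeatedly embeds into refinement monoids (via Wehrung's theorem \cite[Corollary 2.7]{Wembed}) and then quotients so as to force the strong stable ranks of the images of $M_0$ back down to their original values (via Proposition \ref{prop3}). A single round does not suffice: Proposition \ref{prop3} preserves simplicity, conicality, and unitarity but produces a quotient that need not have refinement, whereas a Wehrung embedding into a refinement monoid can only raise strong stable ranks (Observation \ref{injec.sr+.incr}). Alternating the two operations and passing to the limit is precisely what reconciles refinement with the exact preservation of $\sr^+$.

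Concretely, let $K$ be the set of $a \in M_0$ with $\sr^+_{M_0}(a) < \infty$, which is nonempty since $\sr^+_{M_0}(0) = 1$, and put $m_a := \sr^+_{M_0}(a)$ for $a \in K$. Starting from a Wehrung unitary embedding $\phi_0 : M_0 \to P_0$ into a simple conical refinement monoid, I would inductively construct $P_i \xrightarrow{g_i} Q_i \xrightarrow{h_i} P_{i+1}$, where $g_i$ is the quotient by the smallest congruence on $P_i$ making $\sr^+_{Q_i}(g_i\phi_i(a)) \le m_a$ for all $a \in K$ (Proposition \ref{prop3}, whose hypotheses $\sr^+_{M_0}(a) \le m_a$ hold with equality), and $h_i$ is a Wehrung unitary embedding of the resulting simple conical monoid $Q_i$ into a simple conical refinement monoid $P_{i+1}$. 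Writing $f_i := h_i g_i$, set $N := \varinjlim(P_i, f_i)$ with structure maps $\theta_i : P_i \to N$, and define $\phi := \theta_0\phi_0$. Along the tower the maps $\phi_i : M_0 \to P_i$ and $g_i\phi_i : M_0 \to Q_i$ are all unitary (by Proposition \ref{prop3}(b) and the fact that composites of unitary maps are unitary), and every $P_i$, $Q_i$ is simple and conical.

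The verification then splits into three parts. First, $N$ inherits the structural properties: it is a refinement monoid because each $P_i$ is and refinement passes to directed colimits (any instance of the refinement axiom in $N$ descends to some $P_{j}$, where witnesses exist and push forward); it is conical and simple by the same descend-to-a-finite-stage reasoning, using that $[x] \ne 0$ forces $x$ to be a nonzero, hence order-unit, element at its stage. Second, $\phi$ is a unitary embedding: injectivity, cofinality, and weak unitarity each follow by lifting the relevant relation to a stage $P_{j'}$ where $\phi_{j'}$ is already unitary. Third, and at the heart of the matter, I would show $\sr^+_N(\phi(a)) = \sr^+_{M_0}(a)$. The inequality $\ge$ is immediate from injectivity of $\phi$ and Observation \ref{injec.sr+.incr}. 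For $\le$ (when $m_a < \infty$), given $m_a\phi(a) + [x] = \phi(a) + [y]$ in $N$, I lift it to a genuine equation in some $P_{j'}$ with chosen representatives $x',y'$, apply $g_{j'}$ so that the strong $m_a$-stable rank condition satisfied by $g_{j'}\phi_{j'}(a)$ in $Q_{j'}$ yields $(m_a-1)g_{j'}\phi_{j'}(a) + g_{j'}(x') = g_{j'}(y')$, and push this back up via $\theta_{j'+1}h_{j'}$, using $\theta_{j'+1}h_{j'}g_{j'} = \theta_{j'+1}f_{j'} = \theta_{j'}$ to recover $(m_a-1)\phi(a) + [x] = [y]$.

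Finally, the displayed consequence is read off from \eqref{sr.vs.sr+} and Lemma \ref{unitary.sr.incr}: the lower bound $\sr_N(\phi(a)) \ge \sr_{M_0}(a)$ comes from unitarity of $\phi$, while the upper bound follows from $\sr_N(\phi(a)) \le \sr^+_N(\phi(a)) = \sr^+_{M_0}(a) \le \sr_{M_0}(a)+1$. The main obstacle I anticipate is the colimit bookkeeping for the $\le$ direction of $\sr^+$: one must guarantee that the forcing-down performed at every stage genuinely survives into the limit, which hinges on the fact that any witnessing equation in $N$ already appears at a finite stage where the corresponding quotient has been applied.
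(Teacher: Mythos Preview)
Your proposal is correct and follows essentially the same approach as the paper: alternate Wehrung embeddings into simple conical refinement monoids with the quotients of Proposition \ref{prop3} that force the strong stable ranks of the images of $M_0$ back to their original values, then take the direct limit and verify (using Observation \ref{injec.sr+.incr}, \eqref{sr.vs.sr+}, and Lemma \ref{unitary.sr.incr}) that $N$ is a simple conical refinement monoid, $\phi$ is unitary, and $\sr^+_N(\phi(a)) = \sr^+_{M_0}(a)$. The only cosmetic difference is that the paper indexes the tower by the intermediate quotient monoids $M_i$ and applies Proposition \ref{prop3} with the images $\psi_i(a_k) \in M_i$, whereas you keep the domain as $M_0$ throughout via the composites $\phi_i : M_0 \to P_i$; the constructions and verifications are otherwise identical.
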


\begin{proof}
Once the first statement is proved, the second follows via Lemma \ref{unitary.sr.incr} and \eqref{sr.vs.sr+}.

Set $F := \{ a \in M_0 \mid \sr^+_{M_0}(a) < \infty \}$.  In particular, $0 \in F$.  Choose a set $K$ and a surjective map $k \mapsto a_k$ from $K$ onto $F$, and set $m_k := \sr^+_{M_0}(a_k) \in \Zpos$ for $k \in K$.  In view of Observation \ref{injec.sr+.incr}, it suffices to find a unitary embedding $\phi$ of $M_0$ into a simple conical refinement monoid $N$ such that $\sr^+_N(\phi(a_k)) \le m_k$ for all $k \in K$.

We recursively construct a sequence of monoid homomorphisms\begin{equation}  \label{MNM}
M_0 \overset{\phi_0}{\longrightarrow} N_0 \overset{\pi_0}{\longrightarrow} M_1 \overset{\phi_1}{\longrightarrow} N_1 \overset{\pi_1}{\longrightarrow} M_2 \longrightarrow \cdots 
\end{equation}
such that the following hold for all $i \ge 0$:
\begin{enumerate}
\item $M_{i+1}$ is a simple conical monoid;
\item $N_i$ is a simple conical refinement monoid;
\item $\phi_i$ and $\pi_i \phi_i$ are unitary;
\item $\sr^+_{M_{i+1}}( \pi_i \phi_i \cdots \pi_0 \phi_0 (a_k)) \le m_k$ for all $k \in K$.
\end{enumerate}

To start, choose a simple conical refinement monoid $N_0$ and a unitary embedding $\phi_0 : M_0 \rightarrow N_0$, using \cite[Corollary 2.7]{Wembed}.  Then let $\equiv_0$ be the smallest congruence on $N_0$ such that $\sr^+_{N_0/{\equiv_0}}(\pi_{\equiv_0} \phi_0(a_k)) \le m_k$ for all $k \in K$.  Set $M_1 := N_0/{\equiv_0}$ and let $\pi_0 := \pi_{\equiv_0} : N_0 \rightarrow M_1$.  Proposition \ref{prop3} shows that $M_1$ is simple and conical and $\pi_0 \phi_0$ is unitary.  Moreover, $\sr^+_{M_1}( \pi_0\phi_0 (a_k)) \le m_k$ for all $k \in K$ by the choice of $\equiv_0$.  Thus, (1)--(4) hold for $i=0$.  

Now repeat this process recursively.

Having set up \eqref{MNM}, let $N$ be a direct limit of this sequence, with limit maps $\mu_i : M_i \rightarrow N$ and $\nu_i : N_i \rightarrow N$.  Moreover, set 
$$
\psi_i := \pi_{i-1} \phi_{i-1} \cdots \pi_0 \phi_0 : M_0 \rightarrow M_i \qquad \forall\; i\ge0.
$$
In view of (3), all of the maps $\psi_i$ are unitary.  It follows that $\mu_0$ is unitary, which implies in particular that $N$ is nonzero.
Since all of the $N_i$ are simple and conical with refinement, $N$ is a simple conical refinement monoid.  Thus, taking $\phi := \mu_0$, we are almost done.

It remains to show that $\sr^+_N(\mu_0(a_k)) \le m_k$ for all $k \in K$.  Suppose $k \in K$ and $m_k \mu_0(a_k) + x = \mu_0(a_k) + y$ for some $x,y \in N$.  There is an index $i$ such that $x = \mu_i(x')$ and $y = \mu_i(y')$ for some $x',y' \in M_i$, and after increasing $i$ suitably we may assume also that $m_k \psi_i(a_k) + x' = \psi_i(a_k) + y'$.  Since $\sr^+_{M_i}(\psi_i(a_k)) \le m_k$ by (4), it follows that $(m_k-1) \psi_i(a_k) + x' = y'$.  Applying $\mu_i$, we conclude that $(m_k-1) \mu_0(a_k) + x = y$, proving that $\sr^+_N(\mu_0(a_k)) \le m_k$.
\end{proof}

The second statement of Theorem \ref{simple.crm.keep.sr+.unitary} cannot be reduced to a general equality.  On one hand, $\sr^+ = 1$ is the same as cancellativity, so if $M_0$ is cancellative, the theorem yields $\sr_{M_0}(a) = 1 = \sr_N(\phi(a))$ for all $a \in M_0$.  On the other hand, if we apply the theorem with $M_0$ equal to the monoid $M$ of Example \ref{arch.sr2&4}, we cannot have both $\sr_N(\phi(a)) = \sr_{M_0}(a) = 4$ and $\sr_N(\phi(2a)) = \sr_{M_0}(2a) = 2$ because of Theorem \ref{sr.ka.refine}.  In fact, $\sr^+_N(\phi(a)) = \sr^+_{M_0}(a) = 5$, whence $\sr_N(\phi(a))$ is either $4$ or $5$, and so $\sr_N(\phi(2a)) = 3$ by Theorem \ref{sr.ka.refine}.

We can now show that the third case of the trichotomy established in Theorem \ref{trichot.simple.crm} occurs.

\begin{theorem}  \label{simple.crm.sr[2,infty)}
There exist simple conical refinement monoids $N$ such that
\begin{equation}  \label{N.sr2infty}
\sr_N(N \setminus \{0\}) = \ZZ_{\ge2} \,.
\end{equation}
\end{theorem}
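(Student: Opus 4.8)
The plan is to obtain $N$ from the embedding machinery of Theorem~\ref{simple.crm.keep.sr+.unitary} and then read off its stable-rank set from the trichotomy of Theorem~\ref{trichot.simple.crm}. Since that theorem guarantees $\sr_N(N\setminus\{0\})$ is one of $\{1\}$, $\{\infty\}$, or $\ZZ_{\ge2}$ whenever $N$ is a simple conical refinement monoid, it suffices to arrange that $N$ contains a single nonzero element whose stable rank is finite and at least $2$: finiteness rules out $\{\infty\}$, and a value $\ge2$ rules out $\{1\}$ (recall that in the conical case stable rank $1$ means cancellative, by Proposition~\ref{hermite.etc}(f)), leaving only $\ZZ_{\ge2}$.

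To feed the machine, I would take $M_0$ to be the monoid of Example~\ref{exist.sra=?}(2) for a fixed $n\ge2$, with generator $a$ satisfying $\sr_{M_0}(a)=n$. The first task is to check that $M_0$ is simple. It is conical (as recorded there), so by the conical criterion for simplicity it is enough to show every nonzero element is an order-unit. From the defining relations $na=a+b$ and $2(n-1)a=2b$ one gets $b\le na$ and $a\le 2(n-1)a=2b$, so $a\asymp b$; since also $a\le ma$ for every $m\ge1$, the o-ideal $\langle a\rangle$ contains $0,a,2a,\dots$ together with $b$, i.e.\ $\langle a\rangle=M_0$, and likewise $\langle b\rangle=\langle ma\rangle=M_0$. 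Hence all nonzero elements are order-units and $M_0$ is a simple conical monoid.

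Now apply Theorem~\ref{simple.crm.keep.sr+.unitary} to $M_0$ to produce a simple conical refinement monoid $N$ and a unitary embedding $\phi\colon M_0\to N$ with
$$
\sr_{M_0}(a)\le \sr_N(\phi(a))\le \sr_{M_0}(a)+1.
$$
Thus $n\le \sr_N(\phi(a))\le n+1$, so $\phi(a)$ is a nonzero element (as $\phi$ is injective) of finite stable rank $\ge2$. By Theorem~\ref{trichot.simple.crm}, $\sr_N(N\setminus\{0\})$ equals $\{1\}$, $\{\infty\}$, or $\ZZ_{\ge2}$; the first two are excluded by $\phi(a)$, so $\sr_N(N\setminus\{0\})=\ZZ_{\ge2}$, as required.

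The one genuinely new step is the verification that the seed monoid of Example~\ref{exist.sra=?}(2) is simple; everything else is an assembly of the trichotomy with the strong-stable-rank-preserving embedding, and the $\pm1$ slack in the embedding is harmless because we only need to keep $\sr_N(\phi(a))$ inside the interval $[2,\infty)$, not pin down its exact value.
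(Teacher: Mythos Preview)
Your proof is correct and follows essentially the same route as the paper: seed with the monoid of Example~\ref{exist.sra=?}(2), apply Theorem~\ref{simple.crm.keep.sr+.unitary}, and finish via the trichotomy of Theorem~\ref{trichot.simple.crm}. The paper dismisses simplicity of $M_0$ as ``clear from the construction,'' whereas you spell it out; otherwise the arguments are the same.
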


\begin{proof}
Let $M_0$ be one of the monoids of Example \ref{exist.sra=?}(2), for some integer $n \ge 2$, with the generator $a$ having stable rank $n$.  It is clear from the construction that $M_0$ is simple and conical.  Applying Theorem \ref{simple.crm.keep.sr+.unitary}, we obtain a simple conical refinement monoid $N$ and a unitary embedding $\phi : M_0 \rightarrow  N$ such that $n \le \sr_N(\phi(a)) \le n+1$.  Since $n \ge 2$, \eqref{N.sr2infty} follows from Theorem \ref{trichot.simple.crm}.
\end{proof}

Once a monoid with the properties of Theorem \ref{simple.crm.sr[2,infty)} is in hand, it can be cut down to a countable monoid with the same properties by standard procedures, as follows.

\begin{corollary}  \label{ctbl.sr[2,infty)}
There exist countable simple conical refinement monoids $C$ such that
$$
\sr_C(C \setminus \{0\})  = \ZZ_{\ge2} \,.
$$
\end{corollary}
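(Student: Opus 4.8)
The plan is to realize $C$ as a countable submonoid of a monoid $N$ supplied by Theorem \ref{simple.crm.sr[2,infty)}, obtained by a downward Löwenheim--Skolem type closure. Fix such an $N$, so that $\sr_N(N\setminus\{0\}) = \ZZ_{\ge2}$, and choose $a \in N$ with $\sr_N(a) = 2$. Since $\sr_N(a) > 1$ and $N$ is conical, $a$ is not cancellative by Proposition \ref{hermite.etc}(f), so I can fix witnesses $x_0, y_0 \in N$ with $a + x_0 = a + y_0$ but $x_0 \ne y_0$. I would then build an increasing chain $C_0 \subseteq C_1 \subseteq \cdots$ of countable subsets of $N$, starting with $C_0$ the submonoid generated by $\{a, x_0, y_0\}$, and set $C := \bigcup_n C_n$.

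At each stage I apply three closure operations to the countably many relevant instances arising from the countable set $C_n$: (i) closure under $+$, keeping $0$; (ii) \emph{refinement witnesses} --- for every equation $p_1 + p_2 = q_1 + q_2$ with $p_i, q_j \in C_n$, adjoin elements $z_{ij} \in N$ furnished by refinement in $N$; and (iii) \emph{order witnesses} --- for every pair $(u,v) \in C_n \times C_n$ with $u \le v$ in $N$, adjoin one element $w \in N$ with $u + w = v$. Each step adds only countably many elements, so $C$ is countable. Operation (ii) makes $C$ a refinement monoid (the longer refinements then follow automatically), and conicality of $C$ is inherited from $N$. The crucial effect of operation (iii) is that the algebraic orders agree: for $u,v \in C$ one has $u \le_C v$ if and only if $u \le_N v$, since any witness in $N$ for comparable elements of some $C_n$ is adjoined at the next stage.

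With order-agreement available, simplicity of $C$ is immediate: $a \ne 0$, so $C$ is nonzero, and given a nonzero (hence non-unit) $b \in C$ and any $c \in C$, simplicity of $N$ gives $m$ with $c \le_N mb$, which upgrades to $c \le_C mb$. Thus every nonzero element of $C$ is an order-unit, so $C$ is a simple conical refinement monoid. It remains to pin down $\sr_C(a)$. On one hand $a + x_0 = a + y_0$ with $x_0 \ne y_0$ shows $a$ is not cancellative in $C$, so $\sr_C(a) \ge 2$ by Proposition \ref{hermite.etc}(f). On the other hand, applying the refinement characterization of Proposition \ref{srequiv.refmon}: if $2a + x = a + y$ in $C$, then the same equation holds in $N$, and $\sr_N(a) \le 2$ gives $x \le_N y$, hence $x \le_C y$ by order-agreement; therefore $\sr_C(a) \le 2$. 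So $\sr_C(a) = 2$, and Theorem \ref{trichot.simple.crm} forces $\sr_C(C \setminus \{0\}) = \ZZ_{\ge2}$.

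The main obstacle is that stable rank is not inherited by submonoids in general, so the delicate point is guaranteeing that $a$ retains a finite stable rank $\ge 2$ after cutting down. This is exactly what operation (iii) secures: by arranging that the order on $C$ is the restriction of the order on $N$, the refinement-monoid characterization of the condition $\sr \le 2$ becomes a purely order-theoretic statement that transfers verbatim from $N$ to $C$. The remaining verifications --- countability of the union, closure yielding refinement, and simplicity --- are routine bookkeeping once the chain is set up.
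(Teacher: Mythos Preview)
Your argument is correct, and it follows the same overall architecture as the paper's proof --- a downward L\"owenheim--Skolem construction inside the monoid $N$ of Theorem~\ref{simple.crm.sr[2,infty)} --- but the two proofs differ in which closure operations are used and in how much of the work is delegated to earlier results.

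The paper closes each countable submonoid under three kinds of witnesses, cycled countably often: refinement witnesses, simplicity witnesses, and explicit \emph{stable-rank} witnesses.  For the latter, given any $b$ in the current submonoid with $\sr_N(b)=n_b$, one adjoins a fixed pair $(x_b,y_b)$ witnessing $\sr_N(b)>n_b-1$, and for every relevant equation $n_b\,b+x=b+y$ already present one adjoins a specific $e(x,y)$ solving $n_b\,b=b+e$ and $e+x=y$.  This forces $\sr_C(b)=\sr_N(b)$ for \emph{every} $b\in C$, and the conclusion then follows directly from having seeded $C$ with one element of each stable rank $k\ge2$; the trichotomy of Theorem~\ref{trichot.simple.crm} is not invoked again.

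Your route is more economical: you close only under refinement and \emph{order} witnesses, so that $\le_C$ agrees with $\le_N$ on $C$.  You then transfer the single bound $\sr_C(a)\le2$ from $N$ via the order-theoretic criterion of Proposition~\ref{srequiv.refmon}, pin down $\sr_C(a)=2$ with one explicit non-cancellation witness, and let Theorem~\ref{trichot.simple.crm} produce the full range $\ZZ_{\ge2}$.  This avoids bookkeeping stable-rank witnesses for every element and highlights that, once refinement and order are preserved, controlling the stable rank of a single element suffices.  The paper's approach, by contrast, yields the stronger intermediate fact that $\sr_C$ and $\sr_N$ coincide on all of $C$, and does not need to re-invoke the trichotomy.
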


\begin{proof}
Choose $N$ as in Theorem \ref{simple.crm.sr[2,infty)}.  For each nonzero $a \in N$,  set $n_a := \sr_N(a)$, and choose $x_a,y_a \in N$ such that
\begin{itemize}
\item $(n_a-1)a+x_a = a+y_a$ but $\nexists$ $e \in N$ with $(n_a-1)a = a+e$ and $e+x_a = y_a\,$.
\end{itemize}
Also, set $X_a := \{ (x,y) \in N^2 \mid n_a a+x = a+y \}$, and for each $(x,y) \in X_a$, choose $e(x,y) \in N$ such that
\begin{itemize}
\item $n_a a = a + e(x,y)$ and $e(x,y)+x = y$.
\end{itemize}

We will build $C$ by repeating several basic steps, the first of which is
\begin{enumerate}
\item For any countable submonoid $K \subseteq N$, there is a countable submonoid $L \subseteq N$ such that $K \subseteq L$ and $\sr_L(a) = \sr_N(a)$ for all $a \in L$.
\end{enumerate}
To prove (1), construct countable submonoids $K_0 := K \subseteq K_1 \subseteq \cdots$ of $N$ where
\begin{itemize}
\item $K_{i+1}$ is the submonoid of $N$ generated by $K_i$ together with $\{ x_a,y_a \mid a \in K_i \}$ and $\{ e(x,y) \mid (x,y) \in X_a \cap K_i^2 \}$.
\end{itemize}
Then $L := \bigcup_{i=0}^\infty K_i$ is a countable submonoid of $N$ satisfying (1).

By similar means, we see that
\begin{enumerate}
\item[(2)] For any countable submonoid $K \subseteq N$, there is a countable submonoid $L \subseteq N$ such that $K \subseteq L$ and $L$ has refinement.
\item[(3)] For any countable submonoid $K \subseteq N$, there is a countable submonoid $L \subseteq N$ such that $K \subseteq L$ and $L$ is simple.
\end{enumerate}

Our final construction consists of cycling through (1), (2), and (3) countably many times.  To start, choose $a_2,a_3, \ldots \in N$ such that $\sr(a_k) = k$ for each $k \in \ZZ_{\ge2}$, and let $C_0$ be the submonoid of $N$ generated by $\{a_2,a_3,\dots \}$.  Then construct countable submonoids $C_0 \subseteq C_1 \subseteq \cdots$ of $N$ such that for all $i \in \Znn$,
\begin{itemize}
\item $\sr_{C_{3i+1}}(a) = \sr_N(a)$ for all $a \in C_{3i+1}$.
\item $C_{3i+2}$ has refinement.
\item $C_{3i+3}$ is simple.
\end{itemize}
Then $C := \bigcup_{j=0}^\infty C_j$ is a countable submonoid of $N$ with the desired properties.
\end{proof}

\sectionnew{Monoids of projective modules}  \label{V(R)s}

We discuss monoids built from isomorphism classes of projective modules, with addition induced from direct sums, and consider stable ranks within these monoids.  Monoids built from more general classes of modules will be discussed in the following section.

All rings mentioned are assumed to be associative, and unital unless otherwise indicated.  

\begin{definition}  \label{def.vr}
Let $R$ be a ring and $\FP(R)$ the class of finitely generated projective right $R$-modules.  For each $A \in \FP(R)$, let $[A]$ be a label for the \emph{isomorphism class of $A$}.  (These isomorphism classes are proper classes, so they cannot be members of a set, but we can choose a set of labels for them, since there are sub\emph{sets} $\FP_0(R) \subset \FP(R)$ such that each module in $\FP(R)$ is isomorphic to exactly one module in $\FP_0(R)$.)  Then define
$$
V(R) := \{ [A] \mid A \in \FP(R) \}.
$$
There is a well-defined addition operation on $V(R)$ induced from the direct sum operation:
$$
[A] + [B] := [A \oplus B] \qquad \forall\; A,B \in \FP(R).
$$
With this operation, $V(R)$ becomes a conical commutative monoid, and $[R]$ is an order-unit in $V(R)$.  The algebraic order in $V(R)$ is given by the following rule:
$$
[A] \le [B] \ \iff \ A\; \text{is isomorphic to a direct summand of}\; B.
$$

One may equally well build $V(R)$ from the class $\FP_\ell(R)$ of finitely generated projective left $R$-modules, since the functors $\Hom_R(-,R)$ restrict to equivalences between the full subcategories of $\Modr$ and $\rMod$ generated by $\FP(R)$ and $\FP_\ell(R)$ that preserve and reflect isomorphisms and direct sums.
\end{definition}

Recall that a ring $R$ is (\emph{von Neumann}) \emph{regular} if, for each $a \in R$, there is some $x \in R$ satisfying $axa = a$.  It is an \emph{exchange ring} if the regular left module $_RR$ satisfies the finite exchange property in direct sums of modules.  (By \cite[Corollary 2]{Wexch}, this condition is left-right symmetric.)  Every regular ring is an exchange ring \cite[Theorem 3]{Wexch}.  

\begin{definition}  \label{sr.ring}
For any ring $S$, we let $\sr(S)$ denote the K-theoretic \emph{stable rank} of $S$, which is the least positive integer in the stable range of $S$ (if such integers exist) or $\infty$ (otherwise).  A positive integer $n$ lies \emph{in the stable range of $S$} provided that for any left unimodular row $(s_1,\dots,s_{n+1}) \in S^{n+1}$ (meaning that $\sum_{i=1}^{n+1} Ss_i = S$), there are elements $a_i \in S$ such that the row $(s_1+a_1s_{n+1}, \dots, s_n+a_ns_{n+1}) \in S^n$ is left unimodular.  This condition is left-right symmetric by \cite[Theorem 2]{Vas}.
\end{definition}

\begin{theorem}  \label{exch->ref}
Let $R$ be an exchange ring.

{\rm(a)} {\rm\cite[Corollary 1.3]{AGOP}} $V(R)$ is a refinement monoid.

{\rm(b)} {\rm\cite[Theorem 3.2]{AGOP}} $\sr_{V(R)}([A]) = \sr(\End_R(A))$ for each $A \in \FP(R)$.
\end{theorem}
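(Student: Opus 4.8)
For part (a), the plan is to deduce refinement of direct-sum decompositions from the finite exchange property. Since $R$ is an exchange ring, the regular module $R_R$ has the finite exchange property, and hence so does every finite free module $R^k$ and every direct summand of one; thus every $A \in \FP(R)$ has the finite exchange property. Given an equation $[A_1]+[A_2] = [B_1]+[B_2]$ in $V(R)$, I would realize it as an internal decomposition $M := A_1 \oplus A_2 = B_1 \oplus B_2$ and apply the exchange property of $A_1$ to the decomposition $M = B_1 \oplus B_2$, obtaining submodules $B_j' \subseteq B_j$ with $M = A_1 \oplus B_1' \oplus B_2'$. The modular law then furnishes complements $B_j = C_{1j}\oplus B_j'$, and comparing the two decompositions $M = (C_{11}\oplus C_{12})\oplus(B_1'\oplus B_2') = A_1 \oplus(B_1'\oplus B_2')$ yields $A_1 \cong C_{11}\oplus C_{12}$ and $A_2 \cong B_1'\oplus B_2'$. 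Setting $C_{2j} := B_j'$ produces the required refinement matrix $(C_{ij})$, since then $A_i \cong C_{i1}\oplus C_{i2}$ and $B_j \cong C_{1j}\oplus C_{2j}$. This is the classical exchange-property argument.

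For part (b), one inequality comes for free: Warfield's theorem (recalled in the Introduction) gives $\sr_{V(R)}([A]) \le \sr(\End_R(A))$ over an arbitrary ring, since the $n$-substitution property of $\End_R(A)$ forces the defining monoid condition for $[A]$. For the reverse inequality the plan is to reduce to the ring $S := \End_R(A)$ acting on $S_S$. First, $S$ is again an exchange ring (being the endomorphism ring of a module with the finite exchange property), and the functor $\Hom_R(A,-)$ restricts to an equivalence of $\operatorname{add}(A)$ with $\FP(S)$ carrying $A$ to $S$; this induces a monoid isomorphism $\langle [A]\rangle \cong V(S)$ sending $[A] \mapsto [S]$. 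Since $V(R)$ has refinement by (a), Lemma \ref{srI.srM}(b) gives $\sr_{V(R)}([A]) = \sr_{\langle[A]\rangle}([A]) = \sr_{V(S)}([S])$, while trivially $\sr(\End_R(A)) = \sr(S)$. It therefore suffices to prove $\sr(S) \le \sr_{V(S)}([S])$ for an arbitrary exchange ring $S$.

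To establish this, set $n := \sr_{V(S)}([S])$ and verify the stable range condition: every left unimodular row $(s_1,\dots,s_{n+1}) \in S^{n+1}$ can be shortened. Such a row corresponds to a split surjection $S^{n+1} \twoheadrightarrow S$, so $S^{n+1} \cong S \oplus P$ for a finitely generated projective $P$, and reducibility of the row translates into a statement about splitting off a copy of $S$ compatibly. The $n$-stable rank condition for $[S]$ in $V(S)$ supplies an abstract summand $[E]$ with $n[S] = [S]+[E]$ together with the requisite additive relation, and the finite exchange property (equivalently, refinement in $V(S)$) is then used to realize $[E]$ by an honest internal direct summand and to convert the resulting module isomorphism back into the elementary row operations witnessing reducibility. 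I expect the main obstacle to be precisely this last translation: passing between the K-theoretic stable range condition, phrased in terms of unimodular rows and matrix manipulations over $S$, and the purely additive substitution condition in $V(S)$. The exchange hypothesis is exactly what bridges the gap, since it guarantees (via idempotent lifting and refinement) that the abstract monoid-level decompositions lift to genuine module decompositions that can be read off as the needed row operations, whereas over a general ring only the inequality of part (b) survives.
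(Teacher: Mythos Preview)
The paper does not give its own proof of this theorem; it is stated with citations to \cite[Corollary 1.3, Theorem 3.2]{AGOP} and used as a black box.  So there is nothing to compare against directly, and your proposal should be judged on its own merits as a reconstruction of the cited results.

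Your argument for (a) is the standard one and is correct.

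For (b), your overall plan is sound.  The inequality $\sr_{V(R)}([A]) \le \sr(\End_R(A))$ is indeed exactly Theorem~\ref{srVR.le.srEnd} (Warfield), valid over any ring.  Your reduction of the reverse inequality to the case $A=S_S$ via the equivalence $\operatorname{add}(A)\simeq \FP(S)$ and Lemma~\ref{srI.srM}(b) is legitimate and is a clean way to organise the argument; note that it already uses part~(a) to invoke Lemma~\ref{srI.srM}(b).  The assertion that $S=\End_R(A)$ is again an exchange ring is correct.

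The one place to be careful is your last paragraph.  The parenthetical ``(equivalently, refinement in $V(S)$)'' is misleading: refinement in $V(S)$ is a \emph{consequence} of the exchange property, not equivalent to it, and refinement at the monoid level alone is not enough to pass from the monoid condition $\sr_{V(S)}([S])\le n$ back to the ring condition $\sr(S)\le n$.  What is actually needed is the module-level exchange property, applied to a concrete internal decomposition $S^{n+1}=S'\oplus P = S^n\oplus S$, in order to produce genuine submodules (not just isomorphism classes) realizing the substitution.  In practice one shows that the monoid condition, in the form of Proposition~\ref{srequiv.refmon}(d), together with the exchange property of $S'$ applied to the second decomposition, yields Warfield's $n$-substitution property for $S$; this is the argument of \cite[Theorem 2.2]{Ara.stability.survey}.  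Your sketch has the right shape but you should be explicit that the bridge is the exchange property acting on modules, not merely refinement acting on $V(S)$.
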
 

When $R$ is an exchange ring, Theorem \ref{exch->ref}(b) combined with Vaserstein's theorem \cite[Theorem 3]{Vas} implies that if $a \in V(R)$ with $\sr_{V(R)}(a) = n < \infty$, then
$$
\sr_{V(R)}(ka) = 1 + \left\lceil \frac{n-1}{k} \right\rceil \qquad \forall\; k \in \Zpos\,,
$$
matching Theorem \ref{sr.ka.refine}.
Consequently, a monoid with the properties of Example \ref{arch.sr2&4} cannot be isomorphic to $V(R)$ for any exchange ring $R$.

For any ring $R$, the separativity condition in $V(R)$ translates to modules in the form:
\begin{equation}  \label{sep.rings}
A \oplus A \cong A \oplus B \cong B \oplus B \ \implies \ A \cong B, \qquad \forall\; A,B \in \FP(R).
\end{equation}
We say that $R$ is \emph{separative} provided \eqref{sep.rings} holds.
\begin{problems}  \label{sp&rp}
Separativity is a longstanding open question for regular rings and exchange rings:
\begin{itemize}
\item[] \emph{The Separativity Problem}: Is every regular (resp., exchange) ring separative?
\end{itemize}

To test this and many other questions, one would like to know which monoids can be realized as $V(R)$s for regular or exchange rings $R$.  Any such monoid must be conical and have refinement and an order-unit, so the question was first raised in terms of those properties alone.  However, Wehrung then constructed examples of conical refinement monoids (with order-units) that cannot be realized as $V(R)$ for any regular ring $R$ \cite[Corollary 2.12 and remark following]{Wnonmeas}.  (It is unknown whether these examples can be realized as $V(R)$ for some exchange rings $R$.)  On the other hand, separativity and many other questions can be reduced to countable monoids and countable regular or exchange rings, and the question remains open in countable cases:
\begin{itemize}
\item[] \emph{The Realization Problem}: Is every countable conical refinement monoid with an order-unit isomorphic to $V(R)$ for some exchange (or regular) ring $R$?
\end{itemize}

These two problems are inextricably linked:
\begin{itemize}
\item[] \emph{The Separativity Problem and the Realization Problem cannot both have positive answers.}
\end{itemize}
This is due to the existence of monoids that are countable, conical, have refinement and order-units, but are not separative, as discussed in \cite[Section 1]{Ara.realization.survey}.  If such a monoid is isomorphic to $V(R)$ for an exchange ring $R$, then $R$ is not separative.  On the other hand, if all regular (resp., exchange) rings are separative, then a monoid with the above properties cannot be realized as $V(R)$ for a regular (resp., an exchange) ring $R$.

For instance, let $M$ be one of the monoids in Example \ref{exist.sra=?}(2), and observe that $M$ is simple and conical.  It is not separative because $2(n-1)a = 2b = (n-1)a+b$ while $(n-1)a \ne b$.  By \cite[Corollary 2.7]{Wembed}, $M$ embeds into a simple conical refinement monoid $N$.  Note that simplicity implies that $a$ is an order-unit in $N$.  Then there is a countable submonoid $M^+$ of $N$ that contains (the image of) $M$, has refinement, and in which $a$ is an order-unit.  Since $N$ is conical, so is $M^+$.  Since $M$ is not separative, neither is $M^+$.

Another example is given by Corollary \ref{ctbl.sr[2,infty)}: a countable simple conical refinement monoid $C$ such that $\sr_C(C\setminus\{0\}) = \ZZ_{\ge2}$.  By Theorem \ref{sep.trichot.arch}, $C$ cannot be separative.

An additional advantage of this second example is that if $C \cong V(R)$ for some regular ring $R$, then elements $a_k \in C$ with $\sr_C(a_k) = k$ correspond to finitely generated projective $R$-modules $A_k$ such that $\End_R(A_k)$ is a simple regular ring with stable rank $k$, for each integer $k \ge 2$.
\end{problems}

\begin{remark}  \label{extns.sep.via.monoids}
An obstacle to possible constructions of non-separative exchange rings is the following Extension Theorem \cite[Theorem 4.2]{AGOP}:  If $R$ is an exchange ring, $I$ an ideal of $R$ that is separative in a suitable non-unital sense, and $R/I$ is separative, then $R$ is separative.  No module-theoretic proof of this theorem has been found; it is derived from a corresponding extension theorem for refinement monoids \cite[Theorem 4.5]{AGOP}.
\end{remark}

An application of our current monoid results yields the following:

\begin{theorem}  \label{match.sr.corners}
Let $R$ be a separative exchange ring.  If $e$ and $f$ are any idempotents in $R$ such that $ReR = RfR$, then $\sr(eRe) = \sr(fRf)$.  In particular, if $ReR = R$, then $\sr(eRe) = \sr(R)$.
\end{theorem}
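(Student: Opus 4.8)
The plan is to transport the entire problem into the refinement monoid $V(R)$ and apply the last assertion of Theorem~\ref{sep.trichot.arch}, that in a separative monoid the stable rank is constant on each archimedean component. First I would record the standard identifications $\End_R(eR) \cong eRe$ and $\End_R(fR) \cong fRf$, so that Theorem~\ref{exch->ref}(b) yields
$$
\sr(eRe) = \sr_{V(R)}([eR]) \qquad \text{and} \qquad \sr(fRf) = \sr_{V(R)}([fR]).
$$
Since $R$ is an exchange ring, $V(R)$ has refinement by Theorem~\ref{exch->ref}(a), and since $R$ is separative, the module condition \eqref{sep.rings} says precisely that $V(R)$ is a separative monoid. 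The theorem is therefore reduced to showing that $[eR]$ and $[fR]$ lie in the same archimedean component of $V(R)$, that is, $[eR] \asymp [fR]$.

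The crux is translating the two-sided ideal condition $ReR = RfR$ into the asymptotic relation $[eR] \asymp [fR]$, and this is the step I expect to be the main obstacle (though it is elementary and exchange-free). I would establish the equivalence
$$
e \in RfR \iff [eR] \le n[fR] \ \text{for some}\ n \in \Zpos,
$$
equivalently $e \in RfR \iff [eR] \in \langle [fR]\rangle$. For the forward direction, writing $e = \sum_{i=1}^n a_i f b_i$, the $R$-module maps $\alpha : eR \to (fR)^n$, $\alpha(er) = (fb_1 er, \dots, fb_n er)$, and $\beta : (fR)^n \to eR$, $\beta(fs_1,\dots,fs_n) = e\sum_i a_i f s_i$, satisfy $\beta\alpha = \mathrm{id}_{eR}$ because $\sum_i a_i f b_i = e = e^2$; hence $eR$ is isomorphic to a direct summand of $(fR)^n$, giving $[eR]\le n[fR]$. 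Conversely, if $eR$ is a direct summand of $(fR)^n$ via splitting maps with $\beta\alpha = \mathrm{id}_{eR}$, then $\alpha(e)\in (fR)^n$ and the fact that each component of $\beta$ is left multiplication by an element of $eRf$ (since $\Hom_R(fR,eR)\cong eRf$) exhibit $e$ as an element of $RfR$. Applying this equivalence to both $e$ and $f$ shows that $ReR = RfR$, i.e. $e \in RfR$ and $f \in ReR$ simultaneously, holds if and only if $[eR]\le n[fR]$ and $[fR]\le m[eR]$ for suitable $m,n \in \Zpos$, which is exactly $[eR]\asymp[fR]$.

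With $[eR]\asymp[fR]$ in hand, the separativity of $V(R)$ lets me invoke the final assertion of Theorem~\ref{sep.trichot.arch}: all elements of a single archimedean component of a separative monoid have equal stable rank. Hence $\sr_{V(R)}([eR]) = \sr_{V(R)}([fR])$, and combining this with the two displayed identities above gives $\sr(eRe) = \sr(fRf)$. For the final claim, I would specialize to $f = 1$: the hypothesis $ReR = R$ becomes $ReR = R\cdot 1\cdot R$, we have $fRf = R$ and $\End_R(R)\cong R$, so the general statement yields $\sr(eRe) = \sr(R)$.
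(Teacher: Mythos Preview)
Your proposal is correct and follows essentially the same approach as the paper's proof: translate to $V(R)$ via Theorem~\ref{exch->ref}(b), verify $[eR]\asymp[fR]$ by exhibiting $eR$ as a direct summand of $(fR)^n$ from a representation $e=\sum a_ifb_i$, and conclude via the archimedean-component assertion of Theorem~\ref{sep.trichot.arch}. The only differences are cosmetic: you prove the converse direction of $e\in RfR\iff [eR]\in\langle[fR]\rangle$ (which is not needed), and you note that $V(R)$ has refinement (also not needed, since the relevant part of Theorem~\ref{sep.trichot.arch} requires only separativity).
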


\begin{proof}
By Theorem \ref{exch->ref}(b), $\sr(eRe) = \sr_{V(R)}([eR])$ and $\sr(fRf) = \sr_{V(R)}([fR])$.

Since $e \in RfR$, we must have $e = x_1fy_1 + \cdots + x_nfy_n$ for some elements $x_i \in  eRf$ and $y_i \in fRe$.  Consequently, there is a surjective $R$-module homomorphism $\phi : (fR)^n \rightarrow eR$ given by the rule $\phi(r_1,\dots,r_n) = x_1r_1 + \cdots + x_n r_n$.  Since $\phi$ splits by projectivity of $fR$, we find that $eR$ is isomorphic to a direct summand of $(fR)^n$, and so $[eR] \le n[fR]$ in $V(R)$.  By symmetry, $[fR] \le m[eR]$ for some $m \in \Zpos$, and thus $[eR]$ and $[fR]$ lie in the same archimedean component of $V(R)$.  

Since $V(R)$ is separative, Theorem \ref{sep.trichot.arch} implies that $\sr_{V(R)}([eR]) = \sr_{V(R)}([fR])$.
\end{proof}

Concerning general rings, we cite a theorem of Bergman \cite[Theorem 6.4]{Ber}, as corrected and extended by Bergman and Dicks \cite[Remarks following Theorem 3.4]{BD}, which states that any conical commutative monoid with an order-unit is isomorphic to $V(R)$ for some hereditary algebra $R$ over a pre-chosen field $K$.  (The order-unit condition can be dropped if $R$ is allowed to be non-unital \cite[Corollary 4.5]{AGsepgraph}.)  Consequently, the monoids $V(R)$ for arbitrary rings $R$ cannot satisfy any less-than-general conical monoid properties.

\begin{theorem}  \label{srVR.le.srEnd}
If $R$ is a ring, then
$$
\sr_{V(R)}([A]) \le \sr(\End_R(A)) \qquad \forall\; A \in \FP(R).
$$
\end{theorem}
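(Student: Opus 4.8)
The plan is to deduce the inequality directly from Warfield's module-theoretic results recalled in the introduction. Write $S := \End_R(A)$ and $n := \sr(S)$. If $n = \infty$ there is nothing to prove, so I will assume $n < \infty$ and show that $[A]$ satisfies the $n$-stable rank condition in $V(R)$; by Definition \ref{def.sr} this gives $\sr_{V(R)}([A]) \le n$.

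First I would translate the $n$-stable rank condition into module language. Suppose $n[A] + [X] = [A] + [Y]$ for some $[X],[Y] \in V(R)$, and fix representatives $X, Y \in \FP(R)$. The equation means $A^n \oplus X \cong A \oplus Y$, and the goal is to produce a module $E$ with $A^n \cong A \oplus E$ and $E \oplus X \cong Y$, which translates back to $n[A] = [A] + [E]$ and $[E] + [X] = [Y]$ in $V(R)$.

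The key step is to apply Warfield's consequence of the $n$-substitution property \cite[Theorem 1.3]{War}. Since $\sr(S) \le n$, the ring $S$ has the $n$-substitution property \cite[Theorem 1.6]{War}, so Theorem 1.3 is available for $A$. I would invoke it with $B := A^{n-1} \oplus X$, $C := Y$, and $B' := X$: the isomorphism $A \oplus B = A^n \oplus X \cong A \oplus Y = A \oplus C$ together with the tautological decomposition $B \cong A^{n-1} \oplus B'$ yields a module $E$ with $A^n \cong A \oplus E$ and $B' \oplus E = X \oplus E \cong Y$. As $A^n \cong A \oplus E$ exhibits $E$ as a direct summand of the finitely generated projective module $A^n$, we get $E \in \FP(R)$ and hence $[E] \in V(R)$, completing the verification of the $n$-stable rank condition.

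The steps here are all routine once Warfield's theorem is in hand, so the only points requiring care are bookkeeping rather than genuine obstacles: matching the monoid-level hypothesis to the precise module hypotheses of \cite[Theorem 1.3]{War} through the identifications $B = A^{n-1} \oplus X$ and $B' = X$, and confirming that the stable rank appearing in Warfield's substitution theorem agrees with the K-theoretic stable rank of Definition \ref{sr.ring} (both being the Bass--Vaserstein stable rank). Note that no exchange or regularity hypothesis on $R$ is needed, since $E$ is automatically finitely generated projective as a summand of $A^n$; this is precisely why only the one-sided inequality, rather than the equality of Theorem \ref{exch->ref}(b), can be claimed for a general ring.
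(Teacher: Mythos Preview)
Your proof is correct and follows essentially the same approach as the paper's own proof: both reduce to the finite-rank case, translate the monoid equation to $A^n \oplus X \cong A \oplus Y$, invoke \cite[Theorem 1.6]{War} to obtain the $n$-substitution property, and then apply \cite[Theorem 1.3]{War} to produce the required summand of $A^n$. The paper's version is slightly terser, but the logical structure and the citations used are identical.
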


\begin{proof}
Let $A \in \FP(R)$, and assume that $\sr(\End_R(A)) = n < \infty$. Suppose we have some $X,Y \in \FP(R)$ with $n[A]+[X] = [A]+[Y]$, so that $A^n \oplus X \cong A \oplus Y$.  By \cite[Theorem 1.6]{War}, $A$ satisfies the \emph{$n$-substitution property} of \cite[Definition 1.1]{War}, and so \cite[Theorem 1.3]{War} implies that there is a direct summand $L$ of $A^n$ such that $Y \cong X \oplus L$ and $L \oplus A \cong A^n$.  Hence, $n[A] = [A]+[L]$ and $[L]+[X] = [Y]$.  This proves that $\sr_{V(R)}([A]) \le n$.
\end{proof}

The inequality in Theorem \ref{srVR.le.srEnd} is usually strict.  For instance, let $R = k[x_1,\dots,x_n]$ be a polynomial ring over a field $k$.  By the Quillen-Suslin Theorem, all finitely generated projective modules over $R$ are free, and so $V(R) \cong \Znn$.  As a result, $\sr_{V(R)}([A]) = 1$ for all $A \in \FP(R)$, and, in particular, $\sr_{V(R)}([R]) = 1$

On the other hand, the stable rank of $\End_R(R) \cong R$ can be arbitrarily large (but finite).  For instance, if $k$ is a subfield of $\RR$ then $\sr(R) = n+1$ as shown by Vaserstein \cite[Theorem 8]{Vas}.

\begin{example}  \label{example.swan}
Let $R := \RR[x_0,\dots,x_n]/ \langle x_0^2+ \cdots + x_n^2 -1 \rangle$, where $n$ is a positive integer different from $1$, $3$, $7$.  Then $\sr_{V(R)}([R]) = n+1$, as follows.

On one hand, since $R$ has Krull dimension $n$, a theorem of Bass \cite[Theorem 11.1]{Bas} says that $\sr(R) \le n+1$.  On the other hand, by \cite[Theorem 3]{Swa} there is a projective $R$-module $P$ such that $P \oplus R \cong R^{n+1}$ but $P$ is not free.  Then $[R]+n[R] = [R]+[P]$ in $V(R)$, but $n[R] \ne [P]$, so Lemma \ref{W1.2} implies that $\sr_{V(R)}([R]) \nleq n$.  Therefore $\sr_{V(R)}([R]) = n+1$. 

In particular, since $n\ge2$, the trichotomy of Corollary \ref{trichot} now shows that $V(R)$ is not separative.  This also follows from the facts that $2n[R] = n[R]+[P] = 2[P]$, where the first equality is immediate from $(n+1)[R] = [R]+[P]$ and the second holds because $\CC\otimes_\RR P \cong (\CC\otimes_\RR R)^n$ as modules over $\CC\otimes_\RR R$ \cite[Remark, p.270]{Swa}.

In case $n$ is even, the module $P$ is also indecomposable \cite[Theorem 3]{Swa}.  Consequently, the equality $[R]+[P] = [R]+n[R]$ cannot be refined in $V(R)$, and therefore $V(R)$ is not a refinement monoid.
\end{example}

\sectionnew{Monoids of general modules}  \label{V(C)s}

The construction of $V(R)$ can obviously be applied to classes of modules other than $\FP(R)$, and equivalence relations other than isomorphism can be used.  We discuss some of these monoids in the present section. 

\begin{definition}  \label{def.V(C)}
Suppose $\calC$ is a class of modules (say right modules) over a ring $R$, closed under isomorphisms and finite direct sums, and containing the zero module (or, a corresponding class of objects in a category with finite coproducts and a zero object).  Assume also that $\calC$ is \emph{essentially small} (or \emph{skeletally small}), meaning that there is a sub\emph{set} $\calC_0$ of $\calC$ such that each module in $\calC$ is isomorphic to exactly one module in $\calC_0$.  For $A \in \calC$, let $[A] := [A]_{\calC}$ be a label for the isomorphism class of $A$.  Exactly as in Definition \ref{def.vr}, we set
$$
V(\calC) := \{ [A] \mid A \in \calC \},
$$
and we define
$$
[A] + [B] := [A \oplus B] \qquad \forall\; A,B \in \calC.
$$
Then $V(\calC)$ becomes a conical commutative monoid, with zero element $[0]$, but it may or may not have an order-unit.  (For instance, if $\calC$ is the class of all finite abelian groups, there is no order-unit in $V(\calC)$.)  Note that for $[A],[B] \in V(\calC)$, we have
$$
[A] \le [B] \ \text{in} \ V(\calC) \ \iff \ \exists\; X \in \calC \ \text{such that} \ A \oplus X \cong B.
$$
If $\calC$ is closed under direct summands (within $\Modr$), we have $[A] \le [B]$ if and only if $A$ is isomorphic to a direct summand of $B$.
\end{definition}

A natural choice of a class $\calC$ as above is the class $\FG(R)$ of all finitely generated right $R$-modules.  There is no standard notation for the monoid $V(\FG(R))$.

The same argument used to prove Theorem \ref{srVR.le.srEnd} also shows that:

\begin{theorem}  \label{srVC.le.srEnd}
Let $\calC$ be an essentially small class of modules over some ring $R$, closed under isomorphisms, finite direct sums and direct summands, and containing the zero module.  Then
$$
\sr_{V(\calC)}([A]) \le \sr(\End_R(A)) \qquad \forall\; A \in \calC.
$$
\end{theorem}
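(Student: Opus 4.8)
The plan is to transcribe the proof of Theorem \ref{srVR.le.srEnd} essentially verbatim, pausing at each step to confirm that the module-theoretic ingredients remain available under the weaker hypotheses imposed on $\calC$. First I would dispose of the trivial case: if $\sr(\End_R(A)) = \infty$ there is nothing to prove, so assume $\sr(\End_R(A)) = n < \infty$. To establish $\sr_{V(\calC)}([A]) \le n$, I would take arbitrary $X,Y \in \calC$ satisfying $n[A] + [X] = [A] + [Y]$ in $V(\calC)$, which by the definition of the addition on $V(\calC)$ means $A^n \oplus X \cong A \oplus Y$ as right $R$-modules.

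The key inputs are again Warfield's results, and these are statements purely about $R$-modules, so they hold regardless of which class $\calC$ we happen to be working inside. Since $\sr(\End_R(A)) = n$, \cite[Theorem 1.6]{War} gives that $A$ has the $n$-substitution property of \cite[Definition 1.1]{War}, and then \cite[Theorem 1.3]{War} produces a direct summand $L$ of $A^n$ with $A^n \cong A \oplus L$ and $Y \cong X \oplus L$.

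The one place where the hypotheses on $\calC$ must actually be invoked --- and the sole substantive difference from Theorem \ref{srVR.le.srEnd} --- is in verifying that $L$ genuinely belongs to $\calC$, so that $[L]$ is a bona fide element of $V(\calC)$. Here I would use that $A^n \in \calC$ (closure under finite direct sums) together with the fact that $L$ is a direct summand of $A^n$ (closure under direct summands). This is precisely why the extra closure hypothesis is imposed in the statement, whereas for $\FP(R)$ it came for free, since a direct summand of a finitely generated projective module is again finitely generated projective.

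Once $[L] \in V(\calC)$ is secured, the isomorphisms $A^n \cong A \oplus L$ and $Y \cong X \oplus L$ translate directly into $n[A] = [A] + [L]$ and $[L] + [X] = [Y]$ in $V(\calC)$, so that $e := [L]$ serves as the witness required by the $n$-stable rank condition for $[A]$. As $X$ and $Y$ were arbitrary, $\sr_{V(\calC)}([A]) \le n$ follows. I anticipate no real obstacle beyond this bookkeeping: the mathematical content is carried entirely by Warfield's theorems, and the role of the argument is merely to confirm that the summand $L$ does not escape the class $\calC$.
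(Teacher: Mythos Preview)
Your proposal is correct and follows exactly the approach indicated by the paper, which simply states that ``the same argument used to prove Theorem~\ref{srVR.le.srEnd} also shows'' the result; you have correctly identified that the only new point to check is that the summand $L$ of $A^n$ lies in $\calC$, which is guaranteed by the closure hypotheses.
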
 

For certain classes $\calC$ of modules, there are known finite upper bounds for the sets $\sr(V(\calC))$.  We recall that a commutative ring $S$ is called \emph{J-noetherian} if it satisfies the ascending chain condition for semiprimitive ideals (i.e., ideals $I$ for which $J(S/I) = 0$), and that the \emph{J-dimension} of $S$ is the supremum of the lengths of chains of semiprimitive prime ideals of $S$.  Bass proved in \cite[Theorem 11.1]{Bas} that if $S$ is a commutative J-noetherian ring of J-dimension $d < \infty$ and $R$ is a module-finite $S$-algebra (meaning that $R$ is finitely generated as an $S$-module), then $\sr(R) \le d+1$.  Warfield extended this to endomorphism rings of finitely presented $R$-modules, from which we obtain

\begin{theorem}  \label{bound.srC.from.Warfield}
Let  $S$ be a commutative J-noetherian ring of J-dimension $d < \infty$, and $R$ an $S$-algebra such that for each semiprimitive prime ideal $P$ of $S$, the localization $R_P$ is a module finite $S_P$-algebra.  Let $\calC$ be the class of finitely presented right $R$-modules (or any subclass closed under isomorphisms, finite direct sums and direct summands, and containing the zero module).  Then
$$
\sr_{V(\calC)}([A]) \le d+1 \qquad \forall\; A \in \calC.
$$
\end{theorem}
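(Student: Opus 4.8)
The plan is to reduce the assertion about the monoid $V(\calC)$ to a bound on the K-theoretic stable ranks of endomorphism rings, and then to quote the Bass--Warfield estimate. Concretely, I would chain together two inequalities: $\sr_{V(\calC)}([A]) \le \sr(\End_R(A))$, coming from Theorem \ref{srVC.le.srEnd}, and $\sr(\End_R(A)) \le d+1$, coming from Warfield's extension of Bass's theorem.

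First I would verify that $\calC$ meets the hypotheses of Theorem \ref{srVC.le.srEnd}. The class of finitely presented right $R$-modules is essentially small, contains the zero module, and is closed under finite direct sums; it is also closed under direct summands, because a direct summand of a finitely presented module is itself finitely presented (the kernel of a finite free presentation of the summand is finitely generated, by a Schanuel-type argument). For the alternative hypothesis, these closure properties are assumed outright. Thus Theorem \ref{srVC.le.srEnd} applies verbatim and yields
$$
\sr_{V(\calC)}([A]) \le \sr(\End_R(A)) \qquad \forall\; A \in \calC.
$$

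The second and substantive step is the bound $\sr(\End_R(A)) \le d+1$. Here I would invoke Warfield's extension of Bass's theorem \cite{War}: under precisely the standing hypotheses --- $S$ commutative J-noetherian of J-dimension $d$, and $R_P$ module-finite over $S_P$ for every semiprimitive prime $P$ of $S$ --- the endomorphism ring of any finitely presented $R$-module has stable rank at most $d+1$. Combining this with the previous display gives $\sr_{V(\calC)}([A]) \le d+1$ for all $A \in \calC$, completing the proof.

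The real mathematical work sits inside Warfield's theorem, and that is where the main obstacle would lie were one to prove the bound from scratch rather than cite it. The point is that $\End_R(A)$ need not be module-finite over $S$ globally, so Bass's theorem \cite[Theorem 11.1]{Bas} does not apply directly; the localization hypothesis only ensures that, at each semiprimitive prime $P$ of $S$, the endomorphism ring $\End_{R_P}(A_P)$ --- to which $\End_R(A)$ localizes, since $A$ is finitely presented and $S$ is central --- is module-finite over the ring $S_P$. Warfield's contribution is exactly a local--global principle showing that the Bass-type stable rank bound is controlled by the semiprimitive spectrum of $S$, so that this local module-finiteness suffices to force $\sr(\End_R(A)) \le d+1$. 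Granting that result, the argument in the present setting reduces to the two displayed inequalities.
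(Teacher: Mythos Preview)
Your proposal is correct and follows essentially the same approach as the paper: cite Warfield's theorem \cite[Theorem 3.4]{War} to obtain $\sr(\End_R(A)) \le d+1$ for every $A \in \calC$, and then apply Theorem \ref{srVC.le.srEnd}. Your additional remarks verifying the closure hypotheses on $\calC$ and sketching the content of Warfield's local--global argument are accurate elaborations but not strictly needed for the proof as stated.
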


\begin{proof}
By \cite[Theorem 3.4]{War}, $\sr(\End_R(A)) \le d+1$ for all $A \in \calC$.  The result thus follows from Theorem \ref{srVC.le.srEnd}.
\end{proof}

Example \ref{example.swan} provides instances in which the upper bound $d+1$ of Theorem \ref{bound.srC.from.Warfield} is attained, for any positive integer $d \ne 1,3,7$.

\begin{example}  \label{V(T)}
Let $\calT$ denote the class of torsion-free abelian groups of finite rank.  (It is essentially small because every group in $\calT$ is isomorphic to a subgroup of one of the vector spaces $\QQ^n$, $n \in \Zpos$.)  It is well known that many types of cancellation fail to hold in $\calT$, whence the analogs also fail in $V(\calT)$.  For instance:
\begin{itemize}
\item There exist $A,X,Y \in \calT$ such that $A \oplus X \cong A \oplus Y$ but $X \ncong Y$ \cite[Section 2]{Jon} (also \cite[Example 2.10]{Arn}).
\item There exist $A,Y \in \calT$ such that $A \oplus A \cong A \oplus Y$ but $A \ncong Y$ \cite[Example 8.20]{Arn}.
\item There exist $A,B \in \calT$ such that $A\oplus A \cong A\oplus B \cong B \oplus B$ but $A \ncong B$ \cite[Theorem 12]{OV}.
\item There exist $A,B \in \calT$ such that $A^n \cong B^n$ for all integers $n\ge2$ but $A \ncong B$ \cite[Theorem 12]{OV}.
\end{itemize}

In particular, $V(\calT)$ is not separative.
It is also known that $V(\calT)$ does not have refinement.  This follows from an example of J\'onsson \cite{Jon1}, which provides pairwise non-isomorphic indecomposable groups $A,B,C,D \in \calT$ such that $A\oplus B \cong C \oplus D$.  Since $A$, $C$, $D$ are indecomposable and $A \ncong C,D$, there is no decomposition $A \cong A_1 \oplus A_2$ such that $A_1$ is isomorphic to a direct summand of $C$ and $A_2$ is isomorphic to a direct summand of $D$.  Consequently, the equation $[A]+[B] = [C]+[D]$ in $V(\calT)$ cannot be refined.

Warfield proved that $\sr(\End(A)) \le 2$ for all $A \in \calT$ \cite[Theorem 5.6]{War}, which by Theorem \ref{srVC.le.srEnd} implies that $\sr_{V(\calT)}([A]) \le 2$ for all such $A$.  (This conclusion, in group-theoretic form, was also proved in \cite[Theorem 5.6]{War}.)  Since ``most" $A \in \calT$ do not cancel from direct sums, $\sr_{V(\calT)}([A]) = 2$ for ``most" $A \in \calT$.  However, there do exist torsion-free abelian groups $A$ of finite rank which are cancellative, starting with $A = \ZZ$ (e.g., \cite[Corollary 8.8(b)]{Arn}).

Warfield also noted, in \cite[Remark, p.479]{War}, that there exist groups $A \in \calT$ such that (in our terminology) $[A]$ is not Hermite in $V(\calT)$.  
\end{example}

Monoids analogous to $V(\calC)$ may also be constructed using relations coarser than isomorphism, as follows.

\begin{definition}  \label{def.V(C)/sim}
Let $\calC$ be an essentially small class of modules over some ring $R$, closed under isomorphisms, finite direct sums and direct summands, and containing the zero module.   Suppose we have an equivalence relation $\sim$ on $\calC$ that is \emph{stable under isomorphism} and \emph{stable under additional summands}, that is, ($A \cong B \implies A \sim B$) and ($A \sim B \implies A \oplus C \sim B \oplus C$) for any $A,B,C \in \calC$.

For each $A \in \calC$, let $[A]_\sim$ be a label for the $\sim$-equivalence class of $A$.  Following the pattern of Definition \ref{def.V(C)}, we set
$$
V(\calC/{\sim}) := \{ [A]_\sim \mid A \in \calC \},
$$
and we define
$$
[A]_\sim + [B]_\sim := [A \oplus B]_\sim \qquad \forall\; A,B \in \calC.
$$
Then $V(\calC/{\sim})$ becomes a commutative monoid.  Unlike $V(\calC)$, however, $V(\calC/{\sim})$ is not necessarily conical.  For instance, let $\calC$ be the class of finite abelian groups, fix an integer $n \ge 3$, and define $\sim$ on $\calC$ by the rule $A \sim B$ if and only if $\card(A) \equiv \card(B) \pmod{n}$.  Then $(\ZZ/(n-1)\ZZ) \oplus (\ZZ/(n-1)\ZZ) \sim \{0\}$ but $\ZZ/(n-1)\ZZ \not\sim \{0\}$.
\end{definition}

\begin{examples}  \label{stC/sim.expls}
The following relations on the class $\calT$ have been intensively studied.  Groups $A$ and $B$ in $\calT$ are
\begin{align*}
&\text{\emph{multi-isomorphic}} &&\text{in case} &&\quad A^n \cong B^n &&\forall\; n \ge 2;  \\
&\text{\emph{stably isomorphic}} &&\text{in case} &&\quad A \oplus C \cong B \oplus C &&\text{for some}\; C \in \calT;  \\
&\text{\emph{near-isomorphic}} &&\text{in case} &&\quad A^n \cong B^n &&\text{for some}\; n > 0;  \\
&\text{\emph{quasi-isomorphic}} &&\text{in case} &&\quad A \cong A' \le B \cong B' \le A.
\end{align*}
(The original definition of near-isomorphism required the existence of a homomorphism $f : A \rightarrow B$ such that the localization $f_p : A_p \rightarrow B_p$ is an isomorphism for all primes $p$. Warfield proved that near-isomorphism is equivalent to the condition given above \cite[Theorem 5.9]{War}.  The original definition of quasi-isomorphism required the existence of mutual embeddings whose cokernels are bounded.  By, e.g., \cite[Corollary 6.2]{Arn}, the boundedness condition is redundant for groups in $\calT$.)

As discussed in \cite[pp.539,540]{OV},
\begin{multline*}
\text{isomorphism} \implies \text{multi-isomorphism} \implies \text{stable isomorphism} \\
\implies \text{near-isomorphism} \implies \text{quasi-isomorphism},
\end{multline*}
and none of these implications is reversible.
It is clear that these relations are equivalence relations, and that they are stable under isomorphisms and additional summands.

(1)  If $si$ is the relation of stable isomorphism on $\calT$, then $V(\calT/si)$ is clearly cancellative, and so we have $\sr_{V(\calT/si)}([A]_{si}) = 1$ for all $A \in \calT$.

(2)  It follows from \cite[Corollary 5.10]{War} that the relation $ni$ of near-isomorphism on $\calT$ cancels from direct sums.  Hence, $V(\calT/ni)$ is cancellative, and so $\sr_{V(\calT/ni)}([A]_{ni}) = 1$ for all $A \in \calT$.

(3)  Proposition \ref{M.mod.multi-isom}, together with the fact that $\sr_{V(\calT)}([A]) \le 2$ for all $A \in \calT$ (recall Example \ref{V(T)}), implies that if $mi$ is the relation of multi-isomorphism on $\calT$, then all elements of $V(\calT/mi)$ are Hermite, and so $\sr_{V(\calT/mi)}([A]_{mi}) \le 2$ for all $A \in \calT$.

As noted in \cite[p.540]{OV}, J\'onsson's example \cite[Section 2]{Jon} (cf.~\cite[Example 2.10]{Arn}) provides groups $A$, $B$, $C$, $D$ in $\calT$ such that $A \cong C$ and $A \oplus B \cong C \oplus D$ but $B$ and $D$ are not multi-isomorphic.  Thus $[A]_{mi}$ is not cancellative in $V(\calT/mi)$, yielding $\sr_{V(\calT/mi)}([A]_{mi}) = 2$ (because $V(\calT/mi)$ is conical).

(4)  The relation $qi$ of quasi-isomorphism is cancellative with respect to direct sums, as follows from the uniqueness of quasi-decompositions into strongly indecomposable groups in $\calT$ up to quasi-isomorphism (e.g., \cite[Corollary 7.9]{Arn}).  In particular, $V(\calT/qi)$ is cancellative.  Thus $\sr_{V(\calT/qi)}([A]_{qi}) = 1$ for all $A \in \calT$.  

More strongly, the uniqueness theorem implies that $V(\calT/qi)$ is a direct sum of copies of $\Znn$, one copy for each quasi-isomorphism class of strongly indecomposable groups in $\calT$.  

The nonzero subgroups of $\QQ$ are certainly strongly indecomposable, and two of them are quasi-isomorphic if and only if isomorphic (e.g., \cite[Corollary 1.3]{Arn}).  Therefore $V(\calT/qi)$ is an infinite direct sum of copies of $\Znn$.  It follows that $V(\calT/qi)$ does not have an order-unit.  Since there exist surjective monoid homomorphisms
$$
V(\calT) \rightarrow V(\calT/mi) \rightarrow V(\calT/si) \rightarrow V(\calT/ni) \rightarrow V(\calT/qi),
$$
none of the monoids $V(\calT)$, $V(\calT/mi)$, $V(\calT/si)$, $V(\calT/ni)$ has an order-unit.
\end{examples}

\begin{example}  \label{Bro.cancel}
Let $\calN$ be the class of all noetherian modules (right modules, say)  over a ring $R$.  Define a relation $\sim$ on $\calN$ by
\begin{itemize}
\item[] $A \sim B$ if and only if $A$ and $B$ have isomorphic submodule series, meaning that there exist chains of submodules $A_0=0 \le A_1 \le \cdots \le A_n = A$ and $B_0=0 \le B_1 \le \cdots \le B_n = B$ together with a permutation $\sigma \in S_n$ such that $A_i/A_{i-1} \cong B_{\sigma(i)}/B_{\sigma(i)-1}$ for all $i=1,\dots,n$.  
\end{itemize}
This is an equivalence relation on $\calN$ \cite[Proposition 3.3]{Bro}, which is clearly stable under isomorphisms and additional summands.  We can thus construct the monoid $V(\calN/{\sim})$.  It is conical, and $[R]_{\sim}$ is an order-unit in $V(\calN/{\sim})$.  By \cite[Proposition 3.8 and Theorem 5.1]{Bro}, $V(\calN/{\sim})$ has refinement and is strongly separative.  Thus, all elements of $V(\calN/{\sim})$ are Hermite (Lemma \ref{agop.p126}) and consequently have stable rank at most $2$.

On the other hand, stable rank $1$ (equivalently, cancellation) can fail in $V(\calN/{\sim})$.  Take $R = \ZZ$ for instance.  As noted at the end of \cite[p.223]{Bro}, $[\ZZ]_{\sim} = [\ZZ/2\ZZ]_{\sim} + [\ZZ]_{\sim}$ but $[0]_{\sim} \ne [\ZZ/2\ZZ]_{\sim}$.  Therefore $\sr_{V(\calN/{\sim})}([\ZZ]_{\sim}) = 2$.

A mixed version of cancellation does hold relative to $\calN$:  if $A \in \calN$ and $X$, $Y$ are arbitrary $R$-modules such that $A \oplus X \cong A \oplus Y$, then $X \sim Y$ \cite[Theorem 5.5]{Bro}.
\end{example}

\section{Acknowledgements}

The first and sixth authors were partially supported by the Spanish State Research Agency (grant No.~PID2020-113047GB-I00/AEI/10.13039/501100011033), by the Comissionat per Universitats i Recerca de la Generalitat de Catalunya (grant No.~2017-SGR-1725) and by the Spanish State Research Agency through the Severo Ochoa and Mar\'ia de Maeztu Program for Centers and Units of Excellence in R\&D (CEX2020-001084-M).  The third author was partially supported by the Simons Foundation (grant \#963435).
The third and fourth authors thank the Mathematics Department of the Universitat Aut\`onoma de Barcelona for hospitality during their visits, and the fourth author also thanks the Mathematics Departments of the University of California, Santa Barbara and the Universidad de C\'adiz for their hospitality during the early part of this project.  The fifth author was partially supported by PAI III grant FQM-298 of the Junta de Andaluc\'ia, by the DGI-MINECO and European Regional Development Fund, jointly, through grant PID2020-113047GB-I00, and by the grant ``Operator Theory: an interdisciplinary approach'', reference ProyExcel 00780, a project financed in the 2021 call for Grants for Excellence Projects, under a competitive bidding regime, aimed at entities qualified as Agents of the Andalusian Knowledge System, in the scope of the Plan Andaluz de Investigaci\'on, Desarrollo e Innovaci\'on (PAIDI 2020), Consejer\'ia de Universidad, Investigaci\'on e Innovaci\'on of the Junta de Andaluc\'ia.


\end{document}